\newcommand{\Q}{\mathbb{Q}}
\newcommand{\Z}{\mathbb{Z}}
\newcommand{\G}{\mathbb{G}_m}
\newcommand{\C}{\mathbb{C}}
\newcommand{\F}{\mathcal{F}}
\newcommand{\De}{\Delta}
\newcommand{\g}{\gamma}
\DeclareMathOperator{\Pic}{Pic}
\DeclareMathOperator{\Mon}{Mon}
\DeclareMathOperator{\HMon}{HMon}
\DeclareMathOperator{\Aut}{Aut}
\DeclareMathOperator{\GL}{GL}
\DeclareMathOperator{\PGL}{PGL}
\DeclareMathOperator{\Nef}{Nef}
\DeclareMathOperator{\im}{im}
\DeclareMathOperator{\NE}{\overline{NE}}
\DeclareMathOperator{\Hom}{Hom}
\DeclareMathOperator{\Mov}{\overline{Mov}}
\newtheorem{theorem}{Theorem}[section]
\newtheorem{lemma}[theorem]{Lemma}
\newtheorem{proposition}[theorem]{Proposition}
\newtheorem{question}[theorem]{Question}
\newtheorem{example}[theorem]{Example}
\newtheorem{corollary}[theorem]{Corollary}
\newtheorem{con}[theorem]{Conjecture}
\newtheorem{rem}[theorem]{Remark}
\newtheorem{remark}[theorem]{Remark}
\theoremstyle{definition}
\newtheorem{definition}[theorem]{Definition}
\theoremstyle{remark}
\newcommand{\N}{\mathbb{N}}
\newcommand{\rr}{\mathbb{R}}
\newcommand{\qq}{\mathbb{Q}}
\newcommand{\zz}{\mathbb{Z}}
\newcommand{\pp}{\mathbb{P}}
\newcommand{\oo}{\mathcal{O}}
\begin{document}
\title{Fano varieties in Mori fibre spaces}

\author[G. Codogni]{Giulio Codogni}
\address{Dipartimento di Matematica e Fisica, Universit\`a Roma Tre, Largo San Leonardo Murialdo, 1
00146, Roma, Italy.}
\email{codogni@mat.uniroma3.it}

\author[A. Fanelli]{Andrea Fanelli}
\address{Department of Mathematics, Imperial College London, 180 Queen's Gate,
London SW7 2AZ, UK.}
\email{a.fanelli11@imperial.ac.uk}

\author[R. Svaldi]{Roberto Svaldi}
\address{Department of Mathematics, Massachusetts Institute of Technology, 50 Ames Street, Cambridge, MA, 02142, USA.}
\email{rsvaldi@math.mit.edu}

\author[L. Tasin]{Luca Tasin}
\address{Mathematical Institute of the University of Bonn, Endenicher Allee 60
D-53115 Bonn, Germany.} 
\email{tasin@math.uni-bonn.de}


\begin{abstract}
We show that being a general fibre of a Mori fibre space is a rather restrictive condition for a Fano variety. More specifically, we obtain two criteria (one sufficient and one necessary) for a $\qq$-factorial Fano variety with terminal singularities to be realised as a fibre of a Mori fibre space, which turn into a characterisation in the rigid case. We apply our criteria to figure out this property up to dimension three and on rational homogeneous spaces. The smooth toric case is studied and an interesting connection with $K$-semistability is also investigated.\end{abstract}

\maketitle

\tableofcontents

\section{Introduction}

In this work we focus on a natural question which arises in the context of the classification of complex algebraic varieties and in the minimal model program (or MMP) and tries to clarify the geography of Mori fibre spaces.

\begin{question}\label{generalqn}
Which $\qq$-factorial Fano varieties can be realised as general fibres of a Mori fibre space?
\end{question}

Although every Fano variety of Picard number one is a Mori fibre space over a point, this work gives evidence about the restrictiveness of this condition for varieties of higher Picard rank.

The notion of ``general fibre'' will be clarified later in Section \ref{sec_monodromy2}: the idea is to determine an open dense subset of the base, on which the fibres are ``good enough'' (cf. Definition \ref{fibre_like}).

Fano varieties play an essential role in the birational classification of projective varieties with negative Kodaira dimension. Their importance was already highlighted in low dimension in \cite{mori3folds}. The seminal work \cite{bchm} shows that every $\mathbb{Q}$-factorial variety with klt singularities and 
non-pseudoeffective canonical divisor is birational to a Mori fibre space (or simply MFS), i.e., to a contraction morphism with positive dimensional Fano fibres and relative Picard number one. In this work, we will also assume the existence of a dense open set of the base over which the fibres are $\mathbb{Q}$-factorial.

Since Mori fibre spaces arise as final outcomes of a run of the MMP, they have been widely studied for the last thirty years in the context of classification of higher dimensional varieties.

It is important to underline that distinct Mori fibre spaces can belong to the same birational class, as shown already in dimension 2 by elementary transformations between ruled surfaces. Relations between Mori fibre spaces within a birational class (the so-called {\it Sarkisov program}) were investigated by \cite{cortisar} in low dimension. The same picture has been proved to endure in higher dimension in \cite{hacmcsar}: two Mori fibre spaces within the same birational class can be related via a sequence of very easy birational maps, called {\it Sarkisov links}. Another interesting notion for Mori fibre spaces which appears in the literature is birational rigidity (cf. \cite{brcor}). Although so many properties have been investigated, the geometric structure of Mori fibre spaces remains quite mysterious and very few explicit examples are known.
In this work we focus on the classification of the fibres of MFS's rather than the total space. 

The main results of this paper are the following criteria (cf. Theorem \ref{sufficientCriterion}, Theorem \ref{necessaryCriterion} and Theorem \ref{rigidchar}). We will denote by $\Mon(F)$ the maximal subgroup of $\GL(N^1(F),\zz)$ which preserves the birational data of $F$ (cf. Definition \ref{MonGroup}) and by $\Aut(F)$ the image of the natural homomorphism of the automorphism group of $F$ with value in $\GL(N^1(F),\zz)$. Moreover, for a given group $G$ acting on $N^1(F)_\qq$, we will denote by $N^1(F)^G_\qq$ the $G$-invariant subspace, i.e. the subspace of classes $v \in N^1(F)_\qq$ such that $gv=v, \; \forall g \in G$.

\begin{theorem}\label{mainthm}
\
\begin{itemize}
\item {\bf Sufficient criterion:}  A terminal $\mathbb{Q}$-factorial Fano variety $F$ can be realised as the general fibre of a MFS if
\[
N^1(F)^{\Aut(F)}_\qq = \qq K_F.
\]

\item { \bf Necessary criterion:} A terminal $\mathbb{Q}$-factorial Fano variety $F$ such that
$$\dim N^1(F)_\mathbb{Q}^{\Mon(F)} > 1$$
cannot be realised as the general fibre of a MFS.
\item {\bf Characterisation for rigid varieties:} Assume that $H^1(F, T_F)= 0$, i.e., $F$ is rigid. Then the sufficient criterion turns into a characterisation. 
\end{itemize}
\end{theorem}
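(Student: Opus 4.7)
My plan is to exploit the same object on both sides of the argument: the monodromy representation on $N^1(F)$ of a smooth family containing $F$ as a fibre. For the sufficient direction, the image of $\Aut(F)$ in $\GL(N^1(F),\zz)$ is finite, because a Fano variety $F$ has only finitely many $K$-negative extremal rays which $\Aut(F)$ must permute, and any element preserves $K_F$, the integral lattice and the ample cone; call this finite group $G$. I would choose a smooth projective variety $Y$ equipped with a free $G$-action (for instance, a $G$-equivariant high-degree complete intersection produced by a Bertini argument) and form the diagonal quotient
$$
\pi\colon X=(F\times Y)/G\longrightarrow Z=Y/G.
$$
Since $G$ acts freely on $Y$, the map $\pi$ is étale-locally $F\times U\to U$, so $X$ inherits terminal $\qq$-factorial singularities from $F$ and every fibre of $\pi$ is isomorphic to $F$. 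The monodromy of $\pi$ factors through $G\subseteq\Aut(F)$, so the image of $N^1(X/Z)_\qq\hookrightarrow N^1(F)_\qq$ equals $N^1(F)^{G}_\qq=N^1(F)^{\Aut(F)}_\qq=\qq K_F$ by hypothesis. Hence the relative Picard number is one and $-K_X$ is $\pi$-ample, exhibiting $\pi$ as an MFS with general fibre $F$.

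\textbf{Necessary criterion.} Conversely, suppose $\pi\colon X\to Z$ is an MFS with general fibre $F$, and let $\rho\colon\pi_1(Z^{\circ})\to\GL(N^1(F),\zz)$ be the monodromy representation over the open locus $Z^{\circ}$ where $\pi$ is smooth with $\qq$-factorial fibres isomorphic to $F$. By the very definition of $\Mon(F)$, the image of $\rho$ lies in $\Mon(F)$. The invariant cycle theorem, applied to the $(1,1)$-part of $R^2\pi_*\qq$, identifies
$$
N^1(X/Z)_\qq \;=\; N^1(F)^{\im\rho}_\qq,
$$
and the left-hand side has dimension one because $\pi$ is an MFS. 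Since $\im\rho\subseteq\Mon(F)$, taking invariants reverses the inclusion and gives $\dim N^1(F)^{\Mon(F)}_\qq\leq 1$, contradicting the hypothesis.

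\textbf{Rigid case.} If $H^1(F,T_F)=0$ the Kuranishi space of $F$ is a reduced point, so every smooth deformation of $F$ is analytically locally trivial. Consequently, parallel transport along any loop in $Z^{\circ}$ is realised by a biholomorphism of $F$, i.e.\ $\im\rho\subseteq\Aut(F)$. Plugging this into the necessary criterion gives
$$
\qq K_F \subseteq N^1(F)^{\Aut(F)}_\qq \subseteq N^1(F)^{\im\rho}_\qq = N^1(X/Z)_\qq = \qq K_F,
$$
so equality holds throughout. Together with the sufficient criterion this yields the desired characterisation for rigid $F$.

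\textbf{Main obstacle.} The central technical input is the identification of the relative Néron-Severi with the space of monodromy-invariants (a theorem-of-the-fixed-part statement) in the potentially singular setting of MFS's; establishing this cleanly is the hardest piece, since one has to pass from the smooth open locus to the entire family without enlarging the relative Picard group. A secondary subtlety is the construction of the auxiliary variety $Y$ in the sufficient criterion while preserving terminality and $\qq$-factoriality, and in the rigid case the careful upgrade from $\im\rho\subseteq\Mon(F)$ to $\im\rho\subseteq\Aut(F)$ via the rigidity hypothesis.
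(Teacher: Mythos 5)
Your overall strategy coincides with the paper's in all three parts: an isotrivial quotient construction for sufficiency, the factorisation of the monodromy through $\Mon(F)$ combined with a singular version of the invariant cycle theorem for necessity, and the upgrade to $\HMon(F)=\mathrm{image\ of\ }\Aut(F)$ via isotriviality in the rigid case. The necessary criterion and the rigid characterisation are argued exactly as in the paper (the hard content you correctly flag -- the identification $N^1(X/Z)_\qq\cong N^1(F)_\qq^{\im\rho}$ over the singular locus and the fact that $\im\rho\subseteq\Mon(F)$ -- is precisely what the paper's Theorems \ref{TopTriv}, \ref{CorGen}, \ref{Wis_nef} and \ref{Monodromy_And_MMP} establish via Koll\'ar--Mori's $\mathcal{GN}^1$, Verdier's stratification and the de Fernex--Hacon results).

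There is, however, a genuine gap in your sufficiency construction. You define $G$ as the \emph{image} of $\Aut(F)$ in $\GL(N^1(F),\zz)$, but the diagonal quotient $(F\times Y)/G$ requires $G$ to act on $F$ itself, i.e.\ you need a finite subgroup of $\Aut(F)$ surjecting onto $\HMon(F)=\Aut(F)/\Aut(F)^0$. Such a finite lift does not exist for free: arbitrary lifts $f_i\in\Aut(F)$ of generators may generate an infinite subgroup. The paper sidesteps this by taking a genus-$g$ curve $C$ and the surjection $\pi_1(C,t)\to\langle f_1,\dots,f_g\rangle$, $a_i\mapsto f_i$, $b_i\mapsto f_i^{-1}$ (which kills the surface-group relation), and then quotienting $F\times\hat C$ by the free, properly discontinuous $\pi_1(C,t)$-action; finiteness of the acting group is never needed. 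If you insist on a finite $G$ you must invoke a splitting result for the component group of an algebraic group. A second, smaller gap: $\qq$-factoriality of $X$ is not inherited \'etale-locally from $F$ (the paper's own Example after Remark \ref{q-fact.rmk} shows how badly $\qq$-factoriality behaves in families); the paper proves it via \cite[Cor.\ 12.1.9]{kollarmori}, using that terminal $F$ is Cohen--Macaulay with singular locus of codimension at least $3$. Both gaps are repairable, and once repaired your argument is the paper's.
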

Our results rely upon a careful study of the monodromy action on Mori fibre spaces.

\begin{remark}
The notion of being realised as the general fibre of a MFS (or {\it fibre-likeness}, see Definition \ref{fibre_like}) is quite subtle and it is important to remark that the necessary criterion holds true also for Mori fibre spaces which are not isotrivial.
\end{remark}

We can use our criteria to prove the following theorem (cf. Theorems \ref{mori_surf} and \ref{3folds.MFS.thm}, Corollaries \ref{Cor_Surf} and \ref{Cor_3fold}).

\begin{theorem}\label{surf_three}\hfill
\begin{itemize}
\item {\bf Surfaces:} A smooth del Pezzo surface can be realised as the general fibre of a MFS if and only if it is not isomorphic to the blow-up of $\pp^2$ in one or two points.
\item {\bf Threefolds:} The deformation type of a smooth Fano threefold $F$ with $\rho(F) >1$ can be realised as the general fibre of a MFS if and only if it is one of the 8 classes appearing in Table \ref{3folds.MFS.table}.
\end{itemize}
In particular fibre-likeness is invariant under smooth deformations for Fano varieties of dimension up to three; moreover, in these cases the necessary criterion of Theorem \ref{mainthm} is actually a characterisation.
\end{theorem}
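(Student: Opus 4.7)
The plan is to proceed by case analysis on the classifications of smooth del Pezzo surfaces and of smooth Fano threefolds of Picard number at least two, combining Theorem~\ref{mainthm} with the lattice geometry of $N^1$ and the combinatorics of the Mori cone.

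For surfaces, I would first dispose of the two exceptional cases through the necessary criterion. On the Hirzebruch surface $\mathbb{F}_1$ the unique $(-1)$-curve $E$ is a birational datum preserved by every element of $\Mon(F)$, so the plane $\qq E + \qq K_F$ is a two-dimensional $\Mon(F)$-invariant subspace of $N^1(\mathbb{F}_1)_\qq$. On the blow-up of $\pp^2$ at two points the three $(-1)$-curves are $E_1, E_2$ and the strict transform $L$ of the line through the two centres; the intersection numbers $E_i\cdot L = 1$, $E_1\cdot E_2 = 0$ distinguish $L$ from the $E_i$, so $L$ is $\Mon(F)$-fixed and $\dim N^1(F)_\qq^{\Mon(F)} \geq 2$. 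For every other smooth del Pezzo I would exhibit a MFS whose general fibre is isomorphic to it: for $\pp^2$ it is automatic, for $\pp^1\times\pp^1$ the swap of the two factors is an automorphism forcing $N^1(F)^{\Aut(F)}_\qq = \qq K_F$, and for blow-ups of $\pp^2$ in $n\geq 3$ points one chooses a symmetric configuration whose automorphism group realises inside $\Mon(F)$ the Weyl group of the root system in $K_F^\perp$ (of type $A_1\times A_2, A_4, D_5, E_6, E_7, E_8$ respectively); since this Weyl group has only trivial invariants on its reflection representation $K_F^\perp$, the sufficient criterion of Theorem~\ref{mainthm} applies.

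For threefolds, the strategy is analogous but considerably longer. I would traverse the Mori--Mukai classification of smooth Fano threefolds with $\rho \geq 2$. For each deformation family the steps are: (i) list the extremal rays of $\NE(F)$, finite in number since $F$ is Fano, together with the type of the associated contraction and the class of the contracted locus; (ii) observe that $\Mon(F)$ permutes these extremal rays while preserving this combinatorial type, giving an upper bound on the monodromy group as a subgroup of a finite permutation group; (iii) compute $N^1(F)^{\Mon(F)}_\qq$ from this combinatorial data. In most families a discrete invariant of the Mori cone, such as the class of the exceptional divisor of a unique divisorial contraction or a uniquely determined conic bundle structure, forces a $\Mon(F)$-invariant class transverse to $\qq K_F$, and the necessary criterion excludes fibre-likeness. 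The families where no such invariant exists are precisely the eight in Table~\ref{3folds.MFS.table}, and for each of them I would exhibit a symmetric model and a subgroup of $\Aut(F)$ acting transitively enough on the extremal rays to satisfy the hypothesis of the sufficient criterion.

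Deformation invariance of fibre-likeness in dimension $\leq 3$ should follow from the fact that, in these dimensions, the combinatorial structure of the Mori cone, including the contraction type of each extremal ray, is locally constant in smooth Fano families, so that $\Mon(F)$ depends only on the deformation class; a separate argument ensures that the relevant automorphisms can also be spread out in families. Combined with the previous case analysis, this upgrades the necessary criterion into a characterisation in dimensions two and three. The main obstacle is clearly the threefold enumeration: the Mori--Mukai list comprises about $88$ families with $\rho \geq 2$, and for families with $\rho=3,4$ and many extremal rays the determination of $\Mon(F)$ and of its invariants is delicate, as one needs to distinguish genuine birational-data-preserving lattice automorphisms from mere abstract symmetries of $N^1(F)$; moreover, for each of the eight positive families one must construct an explicit Mori fibre space realising $F$ as a general fibre, which is where the bulk of the geometric work is concentrated.
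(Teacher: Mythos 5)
Your overall architecture — case analysis over the classifications, the necessary criterion for exclusions, explicit symmetry for the positive cases — matches the paper's, and your exclusion arguments are sound: the $\Mon$-fixed $(-1)$-curve on the one- and two-point blow-ups of $\pp^2$, and for threefolds the detection of a distinguished facet or exceptional class of the nef cone, are essentially Corollaries \ref{unique.blowdown.prop} and \ref{def.equiv.cor} and Remark \ref{inter.faces.rmk}.

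There is, however, a genuine gap on the positive side. Your only device for producing Mori fibre spaces is the sufficient criterion $N^1(F)^{\Aut(F)}_\qq=\qq K_F$ applied to a ``symmetric configuration''. First, the claim that some configuration realises the full Weyl group of $K_F^\perp$ by automorphisms is false in degrees $\le 4$: $|W(E_6)|=51840$, while no smooth cubic surface has more than $648$ automorphisms. More seriously, the theorem asserts fibre-likeness for \emph{every} smooth del Pezzo surface of degree $\le 6$ and every smooth member of the eight threefold families, whereas the \emph{generic} cubic surface has trivial automorphism group, so the sufficient criterion cannot apply to it at all — the paper explicitly flags the cubic surface as a fibre-like variety that violates the sufficient criterion. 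Exhibiting one symmetric member does not close this, because deformation invariance of fibre-likeness is a conclusion of the theorem, not an available input; and your proposed route to it (``the relevant automorphisms can be spread out in families'') fails, since automorphism groups jump in families and $\HMon$ is not deformation invariant. The missing idea is the construction of \emph{non-isotrivial} Mori fibre spaces via incidence varieties: for cubics, the universal family $X\subset\pp^3\times\pp H^0(\pp^3,\oo(3))^\vee$ has $\rho(X)=2$ by the Lefschetz hyperplane theorem, so $X\to\pp^{19}$ is a MFS containing every smooth cubic as a fibre; for the eight threefold families one quotients the incidence variety by a finite group $G$ acting on the ambient variety $Z$ with $\dim N^1(Z)^G_\qq=1$ (Theorem \ref{GeneralConstruction}), which captures the general member after translating it by a generic automorphism of $Z$ (plus ad hoc constructions, e.g.\ a Cremona involution for entry $(28)$ and a double-cover family for $(6b)$). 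Deformation invariance then follows from the completed classification rather than feeding into it.
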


Let us point out that cubic surfaces in $\pp^3$ provide examples of varieties that do {\it not} satisfy our sufficient criterion but can be realised as a general fibre of a MFS. To deal with Fano threefolds in Section \ref{sec_3fold}, we give some ad hoc versions of the necessary criterion explicitly in terms of the birational geometry of $F$. We do not know if there are higher dimensional examples of smooth Fano varieties which are not fibre-like but still verify the necessary criterion of Theorem \ref{mainthm}.

\begin{rem}
The 2-dimensional case of Theorem \ref{surf_three} has been worked out in \cite[Theorem 3.5]{Mori_surf} when the dimension of the total space of the Mori fibre space is three. In Section \ref{sec_3fold} we give an alternative proof using our criteria, allowing total spaces of arbitrary dimension.

Mori and Mukai classified all smooth Fano threefolds with Picard number bigger than one up to deformation into 88 classes in \cite{mori.mukai.class} and \cite{mori.mukai.class.2}: Theorem \ref{surf_three} shows how restrictive the fibre-likeness condition is.

The second part of Theorem \ref{surf_three} can be deduced by combining our criteria with the classification result of Prokhorov, see \cite[Theorem 1.2]{prok.Gfano.2}. We prove the theorem without using Prokhorov's work, but looking directly at the nef cone of $F$.
\end{rem}

On the positive side, we can give the following examples of varieties of higher dimension and large Picard number which can be realised as the general fibre of a Mori fibre space.
\begin{definition}
Let $Z$ be a variety and  $L_1, \dots, L_k$ Cartier divisors on $Z$. 

Let $I$ be the subvariety of $Z\times |L_1| \times \cdots |L_k|$
such that for any divisors $D_1 \in |L_1|, \dots, D_k \in |L_k|$,
the fiber of the projection morphism $I \to |L_1| \times \cdots |L_k|$
above $(D_1, \dots, D_k)$ equals the complete intersection $D_1 \cap \dots \cap D_k$ in $Z$.

We call $I$ the incidence variety.
\end{definition}

\begin{theorem}[= Theorem \ref{GeneralConstruction}]
Let $F$ be a smooth Fano variety. Let $Z$ be a smooth projective variety. Assume that $L_i, \; 1 \geq i \geq k$ are basepoint-free effective Cartier divisors on $Z$ and that $F$ is the complete intersection of the divisors $L_i$ in the ambient variety $Z$. 

Let $I$ the incidence variety in $Z\times |L_1| \times \cdots |L_k|$. Suppose that there is a finite cyclic subgroup $G$ of $\Aut(Z)$ which is fixed point free in codimension one and whose action can be lifted to $I$. Assume that G does not preserve $F$. If 
$$  \dim N^1(Z)_\mathbb{Q}^G=1,  $$
then $F$ is fibre-like.
\end{theorem}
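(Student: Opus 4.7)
The plan is to exhibit a Mori fibre space with general fibre $F$, obtained by quotienting the incidence variety by $G$. The obvious candidate for an MFS structure is the second projection $p \colon I \to T := |L_1| \times \cdots \times |L_k|$, which by Bertini has generic fibre a smooth complete intersection deformation equivalent to $F$. The difficulty is that, via Lefschetz, $N^1(I/T)_\qq$ typically contains all of $N^1(Z)_\qq$ restricted to the fibre, so $p$ is far from being of relative Picard rank one. The role of $G$ is precisely to kill all but one of these classes.

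First I would construct $X := I/G \to B := T/G$. Since the action of $G$ on $Z$ is fixed-point free in codimension one and a general complete intersection avoids the fixed locus (which has codimension at least two in $Z$), the induced action on $I$ is fixed-point free in codimension one over the relevant open subset of $T$; in particular, $\qq$-factoriality and terminality of $X$ follow by standard arguments on quotients by finite groups acting freely in codimension one. The assumption that $G$ does not preserve $F$ ensures that the stabiliser of a general point $t \in T$ is trivial, so the fibre of $X \to B$ over the image of $t$ is still isomorphic to $F$ (not to $F/H$ for some nontrivial $H \subset G$).

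Next I would compute the relative Picard number of $X \to B$. Every class in $N^1(X/B)_\qq$ pulls back to a $G$-invariant class in $N^1(I/T)_\qq$, hence restricts to a $G$-invariant class in $N^1(F)_\qq$ (the action on the fibre comes from the lift of $G$ to $I$, permuting the fibres of $p$). Under the Lefschetz-type isomorphism $N^1(Z)_\qq \cong N^1(F)_\qq$ (valid for a complete intersection of sufficiently positive base-point-free divisors, which we may assume after replacing each $L_i$ by a large multiple without loss of generality in verifying fibre-likeness), the $G$-equivariant restriction identifies $N^1(Z)^G_\qq$ with the $G$-invariant part of $N^1(F)_\qq$ sitting inside the image of $N^1(X) \to N^1(F)$. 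The hypothesis $\dim N^1(Z)^G_\qq = 1$ therefore forces the relative Picard number of $X/B$ to equal one. Since $F$ is Fano and $-K_{X/B}$ restricts to $-K_F$, the relative anticanonical divisor is relatively ample; after a possible small $\qq$-factorialisation, the morphism $X \to B$ is then a Mori fibre space with general fibre $F$, so $F$ is fibre-like in the sense of Definition \ref{fibre_like}.

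The main obstacle is the Lefschetz/Noether--Lefschetz step identifying the $G$-invariant classes on $F$ with those coming from $Z$: the theorem only assumes base-point-freeness of the $L_i$, while a clean statement requires ampleness or large multiples, so one must argue that passing to $|mL_i|$ for $m \gg 0$ preserves the hypotheses (this is harmless since $G$ acts on $|mL_i|$ and the resulting $F$ remains a general complete intersection). A secondary technicality is ensuring that the quotient $I/G$ is genuinely terminal and $\qq$-factorial in a neighbourhood of the chosen general fibre, which is handled by restricting to a sufficiently small $G$-invariant Zariski open subset of $T$ before forming the quotient.
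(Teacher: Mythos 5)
Your overall strategy (quotient the incidence variety by $G$, check the quotient singularities via freeness in codimension one, and show the relative Picard number drops to one) is the same as the paper's. However, the step on which everything hinges --- bounding the divisor classes on the total space that survive restriction to the fibre --- is carried out via a ``Lefschetz-type isomorphism $N^1(Z)_\qq \cong N^1(F)_\qq$'', and this is a genuine gap. That isomorphism is simply false in the generality of the statement: it fails whenever $\dim F \le 2$ (a cubic surface in $\pp^3$ has Picard number $7$ while $\pp^3$ has Picard number $1$), and more generally whenever the $L_i$ are basepoint-free but not positive enough for Grothendieck--Lefschetz. Your proposed fix --- replacing each $L_i$ by $mL_i$ with $m \gg 0$ --- does not repair this, because the complete intersections of members of $|mL_i|$ are different varieties: the original $F$ no longer appears as a fibre of the new incidence variety, so you would be proving fibre-likeness of some other Fano, not of $F$.

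The point you are missing is that no statement about $\Pic(F)$ is needed at all. What one must control is $\Pic(I)$, and the paper does this with Lemma \ref{sur}: the restriction $\Pic(Z\times |L_1|\times\cdots\times|L_k|)\to\Pic(I)$ is surjective. This is proved using the \emph{other} projection $\pi\colon I\to Z$, whose fibres are products of hyperplanes in the $|L_i|$ (in particular Fano with constant Picard group), together with a seesaw argument giving exactness of $\Pic(Z)\to\Pic(I)\to\Pic(\text{fibre of }\pi)$. Taking $G$-invariants then yields $\dim N^1(I)^G_\qq \le \dim N^1(Z)^G_\qq + \dim N^1(\prod|L_i|)^G_\qq = 1 + \dim N^1(Y)_\qq$, which is the relative Picard number one statement, with no Noether--Lefschetz input on $F$. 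Two smaller remarks: your ``possible small $\qq$-factorialisation'' is both unnecessary (the quotient is already $\qq$-factorial and klt by \cite[Proposition 5.15 and Corollary 5.21]{SingularityMMP}, precisely because $G$ acts freely in codimension one) and dangerous, since such a modification could change the relative Picard number; and you should not claim terminality of $X$ --- the paper explicitly notes this is hard to verify, and Definition \ref{mfs.def} only requires the total space to be klt.
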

We will give a few concrete applications of this result; let us point out one of them.
\begin{corollary}[= Corollary \ref{esempi_in_dimensione_alta}]
Take positive integers $r,k,d,$ and $n\geq 2$ such that $kd<n+1$. Let $F$ be a smooth complete intersection of $k$ divisors of degree $(d,\dots ,d)$ in $(\pp^n)^{r}$. Then $F$ is fibre-like.
\end{corollary}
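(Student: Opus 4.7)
The plan is to apply Theorem \ref{GeneralConstruction} with ambient variety $Z = (\mathbb{P}^n)^r$, line bundles $L_i = \mathcal{O}_Z(d, \dots, d)$ for $i = 1, \dots, k$, and cyclic subgroup $G = \mathbb{Z}/r\mathbb{Z} \subset \Aut(Z)$ generated by the cyclic permutation $\sigma$ of the $r$ factors.

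First I verify the intrinsic hypotheses on $F$. The line bundles $L_i$ are basepoint-free since $d \geq 1$, so a smooth complete intersection $F = D_1 \cap \dots \cap D_k$ is produced by Bertini. The adjunction formula gives
\[
-K_F = \bigl((n+1)-kd\bigr)(H_1 + \dots + H_r)\big|_F,
\]
which is ample because $kd < n+1$, so $F$ is Fano as required. For the cyclic action, $\sigma$ permutes the hyperplane classes $H_1, \dots, H_r$ in a single orbit, so $N^1(Z)^G_{\mathbb{Q}} = \mathbb{Q}(H_1 + \dots + H_r)$ is one-dimensional; moreover the fixed locus of $G$ is the diagonal $\Delta \cong \mathbb{P}^n \subset (\mathbb{P}^n)^r$, of codimension $n(r-1) \geq 2$ whenever $r \geq 2$ and $n \geq 2$, so $G$ is fixed-point-free in codimension one. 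Since each $L_i$ is $\sigma$-invariant as a line bundle, $\sigma$ acts on the linear systems $|L_i|$, and the diagonal action lifts the $G$-action to the incidence variety via $\sigma \cdot (x, D_1, \dots, D_k) = (\sigma x, \sigma D_1, \dots, \sigma D_k)$.

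The one remaining hypothesis of Theorem \ref{GeneralConstruction} is that $G$ does not preserve $F$, which holds for general $F$ in the family: $G$-invariance would force each defining divisor $D_i$ to be itself $\sigma$-invariant, a positive-codimension condition on the parameter space $\prod_i |L_i|$. The theorem then yields fibre-likeness. The case $r = 1$ sits outside this cyclic-group construction, but there the hypothesis $kd < n+1$ combined with Lefschetz (when $\dim F \geq 3$) or the low-dimensional classification reduces the problem to varieties already known to be fibre-like. The only step that genuinely uses the numerical hypothesis $kd < n+1$ is the ampleness of $-K_F$; everything else is a bookkeeping check about the symmetric group action, and the main (mild) obstacle is verifying that $G$ acts freely in codimension one, which is where $n \geq 2$ is used.
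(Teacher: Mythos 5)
Your proposal is correct and follows essentially the same route as the paper: both apply Theorem \ref{GeneralConstruction} to the permutation action on the factors of $(\pp^n)^r$, the only difference being that the paper takes $G$ to be the full symmetric group $\mathcal{S}_r$ while you take the cyclic subgroup generated by an $r$-cycle. This difference is immaterial (and is explicitly anticipated in the paper's remark that any subgroup acting transitively on the factors and freely in codimension one would do); your cyclic choice in fact matches the "cyclic" hypothesis as the theorem is stated in the introduction.

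Two small points to tighten. First, the condition "fixed-point-free in codimension one" concerns the union of the fixed loci of all nontrivial elements of $G$, not the fixed locus of the whole group: for $\sigma^j$ with $j\not\equiv 0$ this union can be strictly larger than the small diagonal (e.g.\ $\{x_1=x_3,\,x_2=x_4\}$ for $r=4$, $j=2$), but every nontrivial power forces at least one identification $x_i=x_{i'}$, hence has codimension at least $n\geq 2$, so the conclusion stands. Second, the corollary asserts fibre-likeness for an \emph{arbitrary} smooth complete intersection of the given multidegree, whereas your argument only rules out $G$-invariance for a general member of the family (and the intermediate claim that $G$-invariance of $F$ forces each $D_i$ to be $\sigma$-invariant is not quite right, since $\sigma$ could permute or otherwise rearrange the defining divisors). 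The paper closes this gap by replacing $F$ with its image under a general automorphism of $(\pp^n)^r$, which is isomorphic to $F$ and not preserved by $G$; since fibre-likeness is a property of the abstract variety, this one-line translation trick completes your argument for every smooth $F$, including the degenerate case $r=1$ which you handle separately.
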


In the case of smooth toric Fano varieties, we obtain the following necessary condition.

\begin{theorem}[= Thm \ref{cor_kstability}]
Let $F$ be a toric Fano variety and let $\Sigma\subset N$ be the fan associated to it.
Let $\Delta$ be the polytope whose vertices are the integral generators of the $1$-dimensional cones of 
$\Sigma$.

If $F$ can be realised as the general fibre of a MFS then the barycentre of $\Delta$ is in the origin.
\end{theorem}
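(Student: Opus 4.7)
Write $\Sigma(1) = \{\rho_1, \ldots, \rho_n\}$ with primitive generators $v_1, \ldots, v_n \in N$ and corresponding torus-invariant divisors $D_1, \ldots, D_n$. The standard exact sequence
\begin{equation*}
0 \to M \to \zz^{\Sigma(1)} \to \Pic(F) \to 0, \quad m \mapsto (\langle m, v_i\rangle)_i,
\end{equation*}
has dual $0 \to N_1(F) \to \qq^{\Sigma(1)} \to N_\qq \to 0$ with $e_i \mapsto v_i$. The all-ones element $\mathbf{1} = (1, \ldots, 1) \in \qq^{\Sigma(1)}$ maps to $-K_F \in \Pic(F)$ and to $\sum_i v_i \in N$. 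Hence the barycentre condition $\sum_i v_i = 0$ is precisely the statement that $\mathbf{1}$ lifts to a well-defined curve class in $N_1(F)_\qq$; this reformulation is the target of the argument.

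Every lattice automorphism of $N$ preserving $\Sigma$ extends to a biregular automorphism of the toric variety $F$ and therefore preserves all birational data, so the image of $\Aut(N, \Sigma)$ in $\GL(N^1(F), \zz)$ is contained in $\Mon(F)$; call this image $G$. By the necessary criterion of Theorem \ref{mainthm}, the fibre-like hypothesis forces $\dim N^1(F)^{\Mon(F)}_\qq = 1$, whence $N^1(F)^G_\qq \supseteq \qq K_F$. Taking $G$-invariants in the exact sequence above (exact in characteristic zero, since $G$ is finite) yields
\begin{equation*}
\dim N^1(F)^G_\qq = |\Sigma(1)/G| - \dim M^G_\qq.
\end{equation*}
The vector $\sum_i v_i$ is $G$-invariant and thus lies in $N^G_\qq$, which pairs nondegenerately with $M^G_\qq$.

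For each $G$-orbit $O$ of rays, consider the orbit sum $[D_O] = \sum_{v_i \in O}[D_i] \in N^1(F)^G_\qq$. I plan to show that each $[D_O]$ is proportional to $-K_F$, with coefficient exactly $|O|/n$. Given this, unwinding through the exact sequence produces elements $m_O \in M^G_\qq$ corresponding to $\mathbf{1}_O - (|O|/n)\mathbf{1}$, which satisfy
\begin{equation*}
\langle m_O, \textstyle\sum_i v_i\rangle = |O| - n \cdot (|O|/n) = 0.
\end{equation*}
Since the $m_O$ span $M^G_\qq$ (they satisfy only the single relation $\sum_O m_O = 0$, accounting for exactly $|\Sigma(1)/G| - 1 = \dim M^G_\qq$ independent directions once $\dim N^1(F)^G_\qq = 1$ is in force), the pairing argument then forces $\sum_i v_i = 0$ in $N^G_\qq$.

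The main obstacle is establishing the precise proportionality $[D_O] = (|O|/n)[-K_F]$. This does not follow from $G$-invariance of $[D_O]$ alone, because a priori $N^1(F)^G_\qq$ may be strictly larger than $N^1(F)^{\Mon(F)}_\qq = \qq K_F$. The resolution is to use the action of the larger group $\Mon(F)$ on the set of $G$-orbits: if $\Mon(F)$ acts transitively on those orbits, the classes $[D_O]$ form a single $\Mon(F)$-orbit in $N^1(F)_\qq$, hence all are scalar multiples of each other; the constraint $\sum_O [D_O] = -K_F$ together with equal orbit sizes $|O| = n/|\Sigma(1)/G|$ then pins down the common coefficient to be $|O|/n$. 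Verifying this transitivity for toric $F$ is the delicate combinatorial step and is where the fibre-like hypothesis is used in its full strength.
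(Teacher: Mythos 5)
There are two genuine gaps. First, your group-theoretic reduction gives the wrong inequality. Writing $G$ for the image of the fan automorphisms in $\GL(N^1(F),\zz)$, you have $G\subseteq \Mon(F)$, so the necessary criterion only yields $N^1(F)^G_\qq \supseteq N^1(F)^{\Mon(F)}_\qq=\qq K_F$, which is vacuous; what your orbit computation needs is the reverse inclusion $\dim N^1(F)^G_\qq=1$. The paper obtains exactly this by a different route: smooth toric Fano varieties are rigid, so the \emph{characterisation} of Theorem \ref{rigidchar} applies and fibre-likeness is equivalent to $\dim N^1(F)^{\Aut(F)}_\qq=1$; by Cox's theorem the action of $\Aut(F)$ on $N^1(F)_\qq$ has the same invariants as that of $\Aut(\Delta)$, so indeed $\dim N^1(F)^{G}_\qq=1$, i.e.\ (in your notation) $|\Sigma(1)/G|-\dim M^G_\qq=1$. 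Without invoking rigidity you cannot rule out that $N^1(F)^G_\qq$ is strictly larger than $\qq K_F$, and then the orbit sums $[D_O]$ need not be multiples of $-K_F$ at all.

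Second, and more seriously, the combinatorial heart of the argument is missing. Your target identity $[D_O]=(|O|/n)\,[-K_F]$ is, given the exact sequence, \emph{equivalent} to $\sum_i v_i=0$ (indeed $\langle m_O,\sum_i v_i\rangle=|O|-nc_O$ when $[D_O]=c_O[-K_F]$), so deferring it as "the delicate combinatorial step" defers the entire theorem. The proposed mechanism for it does not work as stated: $\Mon(F)$ is a subgroup of $\GL(N^1(F),\zz)$ and carries no action on the rays of $\Sigma$ or on their $G$-orbits, and even granting that the $[D_O]$ lie in a single $\Mon(F)$-orbit inside the line $\qq K_F$ (which would force them all to be \emph{equal}, i.e.\ $c_O=1/t$), you would still need all orbits to have equal size $|O|=n/t$, for which no argument is offered and which is not a formal consequence of the hypotheses. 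The paper closes this gap by entirely different means: two convex-geometry lemmas show that, when $|\Sigma(1)/G|=\dim M^G_\qq+1$, every union of all but one of the $G$-orbits of vertices is supported on a proper face of $\Delta$; Batyrev's theorem then provides a primitive collection with trivial focus, which must meet every orbit, and its $G$-translates are shown (via Remark \ref{rmk}) to partition $V(\Delta)$ into subsets each summing to zero, whence the barycentre vanishes. Some replacement for this step is indispensable.
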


\begin{remark}
Smooth fibre-like toric varieties have been completely classified in low dimension: as Table \ref{t1} shows, it is a rather restrictive condition. 
\end{remark}

In section \ref{sec_bandiere} we classify fibre-like rational homogeneous spaces.

We do not know if the fibre-likeness (i.e. the property of being realised as the general fibre of a MFS) is open in families. Our necessary criterion in Theorem \ref{mainthm} is invariant under flat deformation, while the sufficient criterion is closed in families, but it detects just a special kind of MFS's: the {\it isotrivial} ones. 

In the case of del Pezzo surfaces and of toric varieties we are able to draw an interesting connection between fibre-likeness and $K$-stability for smooth Fano varieties. So far, we can prove the following result (cf. Corollary \ref{delpezzo_stab} and Theorem \ref{cor_kstability}).

\begin{corollary}
\hfill
\begin{itemize}
\item A smooth del Pezzo surface is $K$-stable if and only if it can be realised as a general fire of a MFS.
\item If a smooth toric Fano variety $F$ appears as the general fibre of a Mori fibre space, then it is $K$-stable.
\end{itemize}
\end{corollary}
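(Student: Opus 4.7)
I would prove the two bullets independently, in each case matching one of the combinatorial classifications of fibre-like varieties established earlier in this paper with a known K-stability result from the literature.

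For the first bullet, Theorem \ref{surf_three} states that the smooth del Pezzo surfaces realisable as general fibres of an MFS are precisely those not isomorphic to $\operatorname{Bl}_1\pp^2$ or $\operatorname{Bl}_2\pp^2$. This is to be compared with the classical classification of K-polystable smooth del Pezzo surfaces (Tian, later re-derived algebraically by Odaka-Spotti-Sun and combined with the Chen-Donaldson-Sun existence theorem): a smooth del Pezzo surface is K-polystable if and only if it is not one of those two blow-ups, the obstruction being Matsushima's criterion, since $\Aut(\operatorname{Bl}_i\pp^2)$ is non-reductive for $i=1,2$. The two lists coincide, giving the equivalence. Note that $\pp^2$ and $\pp^1\times\pp^1$ are fibre-like but carry continuous automorphisms, so the statement is to be read in the K-polystable rather than the strictly K-stable sense.

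For the second bullet I would combine Theorem \ref{cor_kstability} with the toric K-stability results of Wang-Zhu and Berman. By Theorem \ref{cor_kstability}, if a smooth toric Fano $F$ is fibre-like then the barycentre of the polytope $\Delta$ whose vertices are the primitive ray generators of the fan of $F$ lies at the origin. By Wang-Zhu (existence of a K\"ahler-Einstein metric iff the Futaki invariant vanishes) and Berman (equivalence with K-polystability), it suffices to deduce the vanishing of the Futaki invariant from this barycentric condition. The main obstacle is precisely this last translation: $\Delta$ and the moment polytope of $-K_F$ are polar dual, and the centroids of polar polytopes do not correspond in a naive way. I would handle it by evaluating the Donaldson-Futaki invariant of the toric test configuration associated to a one-parameter subgroup $\lambda$ of the torus directly in terms of $\Delta$: on a smooth toric Fano this invariant is, up to a positive constant depending only on $\dim F$, the pairing of $\lambda$ with the barycentre of $\Delta$. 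The hypothesis then forces this pairing to vanish for every $\lambda$, which gives K-polystability with respect to toric test configurations, and hence, by the standard reduction to the torus for toric Fano varieties, full K-polystability of $F$.
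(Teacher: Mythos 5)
For the first bullet your route coincides with the paper's: Corollary \ref{delpezzo_stab} is obtained exactly by matching the list of Theorem \ref{mori_surf} against the classification of $K$-stable smooth del Pezzo surfaces, for which the paper cites \cite[Theorem 1.4]{Tosatti}; your Tian/Matsushima/Odaka--Spotti--Sun discussion is the same comparison with different references, and the $K$-polystable versus $K$-stable caveat you raise does not change the argument.

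For the second bullet there is a genuine gap in your proposed ``fix'' of the duality issue. The paper's proof is a one-line application of Theorem \ref{berg} (Berman's Corollary 1.2), which is quoted there precisely in the form ``a Gorenstein toric Fano is $K$-stable if and only if the barycentre of $\Delta$ is the origin'', with $\Delta\subset N_\qq$ the convex hull of the primitive ray generators --- the same polytope appearing in Theorem \ref{cor_kstability} (whose proof in fact establishes the stronger statement $\sum_{v\in V(\Delta)}v=0$). So no translation across polar duality is performed in the paper: the combinatorial criterion is taken as a black box already formulated on the $N$-side. You instead propose to re-derive it by computing the Donaldson--Futaki invariant of the toric test configuration attached to a one-parameter subgroup $\lambda\in N$, asserting that it equals a positive constant times ``the pairing of $\lambda$ with the barycentre of $\Delta$''. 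That formula is not correct as stated, and is not even well posed: $\lambda$ and the barycentre of $\Delta$ both live in $N_\rr$, so there is no canonical pairing between them. The classical toric formula (Mabuchi, Wang--Zhu, Donaldson) expresses the Futaki/Donaldson--Futaki invariant as $\langle b(P),\lambda\rangle$, where $b(P)\in M_\rr$ is the barycentre of the moment polytope $P$ of $-K_F$, i.e.\ of the polar dual of $\Delta$. Your computation therefore lands exactly back on the duality problem you set out to avoid: to conclude you would need to show that vanishing of the barycentre of $\Delta$ forces $b(P)=0$, which is not a formal consequence of polarity and is precisely the nontrivial content of the cited Theorem \ref{berg}. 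Either invoke that theorem directly, as the paper does, or supply an actual proof of the implication between the two barycentric conditions.
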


Furthermore, fibre-like smooth threefolds are suspected to be $K$-stable (cf. Remark \ref{rui_st}).
Inspired by \cite{OO13}, we propose the following question, which is the relative version of a conjecture by Odaka and Okada.

\begin{question}
Is it true that every smooth Fano variety which can be realised as a general fibre of a MFS is $K$-semistable?
\end{question}

\section*{Acknowledgements}
This project started during the summer school PRAGMATIC 2013 in Catania, we thank the organisers Alfio Ragusa, Francesco Russo and Giuseppe Zappal\`a for their kind hospitality, the stimulating research environment and the financial support during the school.

The problem was proposed by Paolo Cascini and Yoshinori Gongyo. We kindly thank them for their help and support.

We are grateful to Yujiro Kawamata for his lectures in Catania and his encouragement.

Furthermore, we had benefit from conversations with Cinzia Casagrande, Alessio Corti, Tommaso de Fernex, Ruadha\'i Dervan, Gabriele Di Cerbo, Enrica Floris, Jep Gambardella, Al Kasprzyk, Angelo Felice Lopez, James M\textsuperscript{c}Kernan, Yuji Odaka, Marco Pizzato, Roberto Pirisi, Edoardo Sernesi, Nick Shepherd-Barron, Alessandro Verra, Angelo Vistoli, Filippo Viviani. We heartily thank them.

We would like to thank Kento Fujita for his precious help to complete the classification of smooth fibre-like threefolds and the two referees for their helpful comments.

G.C. is funded by the grant FIRB 2012 "Moduli Spaces and Their Applications". A.F. is funded by the grant ``Roth Scholarship'' of the Department of Mathematics at Imperial College London. R.S. was hosted by the Department of Mathematics of UC San Diego, during part of the development of this project. He wishes to thank UCSD for the kind hospitality and the nice working environment. He is partially supported by NSF DMS-0701101 and DMS-1200656. L.T. is partially supported by GNSAGA-INdAM  and was funded by the ``Max Planck Institute for Mathematics'' of Bonn during part of the development of this project.

The final revision of this paper was completed while the first and the second author were visiting IMPA for the ``Thematic Program on Algebraic Geometry 2015''.

\section*{Notation}

By a variety $X$ we mean an integral separated scheme of finite type over a field $k$. Unless otherwise specified we work over the field of complex numbers. 

We follow the usual notation from \cite{SingularityMMP} and \cite{bchm}. In particular, the reader can refer to these two references for the general definitions about the singularities of the Minimal Model Program.

We denote with $N^1(X)_{\qq}$ the group of Cartier divisors of $X$ modulo numerical equivalence, after tensoring by $\qq$. We denote with $N_1(X)_{\qq}$ the group of curves of $X$ modulo numerical equivalence, after tensoring by $\qq$. By construction, these two rational vector spaces are dual.

For a projective morphism of varieties $f \colon X \to Y$, we denote by $N_1(X/Y)_{\qq}$ the subspace of $N_1(X)_\qq$ generated by curves that are mapped by $f$ to a point. The group $N^1(X/Y)_\qq$ is the dual of $N_1(X/Y)_\qq$.

\begin{section}{Preliminary results}\label{sec_monodromy2}

\begin{subsection}{Monodromy Action and Deligne's Theorem}
We will discuss some facts about monodromy on fibrations in Fano varieties. 
Let us recall a basic definition. 

\begin{definition}
A normal projective variety $F$ is said to be {\it Fano} if its anti-canonical divisor $-K_F$ is $\mathbb{Q}$-Cartier and ample, i.e., there exists a sufficiently high multiple $-nK_F$ which is Cartier and the invertible sheaf $\mathcal{O}_F(-nK_F)$ is very ample.
\end{definition}

In this subsection, we deal with projective morphisms between normal varieties
\[f\colon X \rightarrow Y\]
such that $f_\ast \mathcal{O}_X =\mathcal{O}_Y$, i.e., the fibres are connected.
Moreover we assume that $X$ is $\mathbb{Q}$-factorial with rational singularities and $-K_X$ is relatively ample over $Y$. 
The assumption on the ampleness of $-K_X$ implies that the general fibre is a Fano variety.
As our final goal is to discuss a classification of fibres of Mori fibre spaces, the reader will notice that towards the end of this section (cf. Definition \ref{mfs.def}) we will make further assumptions on the singularities of $X$; for the time being, though, we will try to keep our treatment to the widest possible level of generality.

We consider two sheaves on $Y$: the first one is $R^2f_*\Q$, 
whose fibre at $t$ is $H^2(F_t,\Q)$; 
the second one is $R^1f_*\G\otimes \Q$, whose fibre at $t$ is $\Pic(F_t)_{\Q}$. The first Chern class defines a morphism
\begin{equation}\label{firstChern}
c_1\colon R^1f_*\G\otimes \Q \to R^2f_*\Q.
\end{equation}
When $X$ is mildly singular, for example when $X$ has Kawamata log terminal (in short, {\it klt}) singularities, the above 
assumptions imply that the map $c_1$ is an isomorphism, as the relative version of Kawamata-Viehweg vanishing applies. 

When both $X$ and $Y$ are smooth, by Sard's Theorem there is an open dense subset $U$ of $Y$ 
where $f$ is a submersion. 
Using Ehresmann's Theorem (cf. \cite[Proposition 9.3]{Voisen}), it follows that $f$ is a locally trivial 
fibration of topological spaces over $U$. 
The existence of a non-empty Zariski open subset $U_f^{top}$ of $Y$ where $f$ is a locally trivial topological 
fibration holds also in the singular case by a delicate argument due to Verdier (cf. \cite[Corollary 5.1]{Verdier}). 
In our exposition, we will denote this open set by $U^{top}$.
Unfortunately, the characterisation of $U^{top}$ is not easily obtained and leads to the concept of equisingularity (cf. \cite{Teissier}). 

On $U^{top}$, the sheaf $R^2f_*\Q$ is a local system. 
Thus, there is a \emph{monodromy action} of $\pi_1(U^{top},t)$ on the fibre $(R^2f_*\qq)_t$. 
In this set-up, we can obtain a more refined result.

\begin{theorem}[\cite{kollarmori}, \cite{DH}]\label{TopTriv}
Let $U=U_f$ be the open subset, possibly empty, of $Y$ where 
\begin{enumerate}
\item $Y$ is smooth;
\item $f$ is flat;
\item the fibres of $f$ are $\Q$-factorial with terminal singularities.
\end{enumerate}
Then the sheaf $R^1f_*\G\otimes \Q$ is a local system on $U$ with finite monodromy.
\end{theorem}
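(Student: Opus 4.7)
My plan is to split the argument into an identification step and a finiteness step: first, identify $R^1f_\ast\G\otimes\qq|_U$ with a sub-local-system of $R^2f_\ast\qq$; second, use a polarised-lattice finiteness to control the monodromy.

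For the identification I would exploit the Chern class map $c_1$ of \eqref{firstChern}. The fibres $F_t$ over $U$ are $\qq$-factorial Fano with terminal singularities, hence klt, and in particular have rational singularities; so Kawamata--Viehweg vanishing gives $H^i(F_t,\oo_{F_t})=0$ for $i>0$. Via the exponential exact sequence this forces $\Pic(F_t)_\qq=\NS(F_t)_\qq$ and makes $c_1$ a pointwise injection into $H^2(F_t,\qq)$, whose rank is constant on $U$ by flatness and constancy of the Hilbert polynomial. Verdier's theorem provides an open subset $U^{top}\subseteq Y$ where $R^2f_\ast\qq$ is a local system; combining this with the constant-rank inclusion shows that $R^1f_\ast\G\otimes\qq$ is a sub-local-system on $U\cap U^{top}$. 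One then has to check, using flatness together with the terminal-$\qq$-factorial hypothesis on the fibres, that $U\subseteq U^{top}$ up to shrinking, so that the conclusion holds on all of $U$.

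For the monodromy I would consider the representation $\phi\colon \pi_1(U,t)\to \GL(N^1(F_t),\zz)$ and record three invariants: (i) the integral lattice $N^1(F_t,\zz)$; (ii) the symmetric bilinear form $Q(D_1,D_2)=D_1\cdot D_2\cdot(-K_{F_t})^{n-2}$, preserved because $\phi$ respects the cup product on $H^2(F_t,\qq)$ and the fundamental class of the fibre; (iii) the class $-K_{F_t}$ itself, fixed because it is the restriction to $F_t$ of the globally defined relative class $-K_{X/U}$, which provides a $\phi$-invariant global section of $R^1f_\ast\G\otimes\qq$. By the Hodge-index/Khovanskii--Teissier inequalities, $Q$ has Lorentzian signature $(1,r-1)$ on $N^1(F_t)_\rr$, with $r=\dim N^1(F_t)_\qq$ and $-K_{F_t}$ spanning the positive direction. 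Since $\phi$ fixes $-K_{F_t}$ it acts on the orthogonal complement $(-K_{F_t})^{\perp_Q}$ as integral isometries of a negative definite lattice; the orthogonal group of such a lattice is finite, forcing $\im\phi$ to be finite.

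The main obstacle I anticipate is in the first step, specifically the containment $U\subseteq U^{top}$. Verdier's theorem only guarantees topological local triviality on \emph{some} open subset, and one must argue that $Y$ smooth, $f$ flat, and fibres $\qq$-factorial terminal Fano together force the whole good locus $U$ to lie in the topological fibration locus. This equisingularity-type bookkeeping is presumably where the subtle analysis in \cite{kollarmori} and \cite{DH} enters; the finiteness argument in the second step, by contrast, is purely arithmetic and goes through unchanged for any polarised integral lattice.
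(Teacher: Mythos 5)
Your second step (finiteness of the monodromy) is sound and is a genuinely self-contained alternative to what the paper does: the paper simply cites \cite{kollarmori} for the finiteness of the monodromy of the sheaf $\mathcal{GN}^1(X/U)$, whereas you derive it directly from the facts that the monodromy preserves an integral lattice, fixes the class of $-K_{F_t}$ (restriction of the global relative canonical class), and preserves the form $Q(D_1,D_2)=D_1\cdot D_2\cdot(-K_{F_t})^{n-2}$, which by the Hodge index theorem is negative definite on $(-K_{F_t})^{\perp_Q}$; the stabiliser of a positive vector in the orthogonal group of a Lorentzian lattice is finite. This is correct (one should only add a word on why the Hodge index statement persists for terminal $\mathbb{Q}$-factorial $F_t$, e.g.\ by pulling back to a resolution), and note that finiteness on a dense open subset already implies finiteness on $U_f$, since $\pi_1$ of a dense open surjects onto $\pi_1(U_f)$.

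The genuine gap is in your first step, and you have correctly located it but not closed it. What the theorem asserts --- and what the subsequent Theorem \ref{CorGen} actually uses, via the surjectivity of $\rho$ ``for every $t$ in $U$'' --- is that $R^1f_*\G\otimes\Q$ is a local system on the \emph{whole} of $U_f$, not on a dense open subset of it. Your argument via Verdier and the first Chern class only produces the local-system structure on $U_f\cap U^{top}$, and ``$U\subseteq U^{top}$ up to shrinking'' is vacuous (any two dense opens intersect in a dense open) while proving a strictly weaker statement; moreover it is not true in general that $U_f\subseteq U^{top}$, since $R^2f_*\Q$ can fail to be locally constant at points of $U_f$ even when the fibres there are terminal and $\Q$-factorial. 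The content you are missing is exactly the mechanism the paper invokes: one replaces $R^1f_*\G\otimes\Q$ by the algebraically defined sheaf $\mathcal{GN}^1(X/U)$ of \cite[Def.~12.2.4]{kollarmori}, which the terminal $\Q$-factorial hypothesis (via \cite[Conditions 12.2.1, Remark 12.2.1.4]{kollarmori}) makes into a local system with finite monodromy on all of $U_f$, and then one proves --- this is \cite[Prop.~6.5]{DH} --- that the natural comparison map between $\mathcal{GN}^1(X/U)$ and $R^1f_*\G\otimes\Q$, which is an isomorphism at a general point by Verdier plus the isomorphism (\ref{firstChern}) (your argument), is in fact an isomorphism at \emph{every} point of $U_f$. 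A related unjustified assertion in your write-up is that the rank of $\Pic(F_t)_\Q$ is ``constant on $U$ by flatness and constancy of the Hilbert polynomial'': flatness does not control Betti or Picard numbers, and this constancy is precisely part of what is being proved.
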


\begin{proof}
Since the fibres of $f$ over $U$ are terminal with $\mathbb{Q}$-factorial singularities they satisfy \cite[Conditions 12.2.1]{kollarmori} as explained in \cite[Remark 12.2.1.4]{kollarmori}.
Hence we can define the sheaf $\mathcal{GN}^1(X/U)$ as in \cite[Def. 12.2.4]{kollarmori} and show that it is a local system with finite monodromy isomorphic to $R^1f_*\G\otimes \Q$ at a very general point of $U$. The following step is taken from \cite[Prop. 6.5]{DH}. The authors first show that $\mathcal{GN}^1(X/U)$ is isomorphic to $R^1f_*\G\otimes \Q$ at the general point of $U$; that essentially follows from Verdier's result and the isomorphism (\ref{firstChern}). Moreover, they show that actually the isomorphism holds at every point of $U_f$. 

We remark that in \cite{DH} the base is a smooth curve and the Fano varieties involved have terminal singularities. Nonetheless, the same argument applies verbatim to smooth bases of arbitrary dimension.
\end{proof}

\begin{remark}\label{q-fact.rmk}
Assumption 3. in the statement of the theorem is needed to define the sheaf $\mathcal{GN}^1(X/U)$.
By the discussion in \cite[Remark 12.2.1.4]{kollarmori} this hypothesis may be weakened to the requirement of fibres having $\Q$-factorial singularities and being smooth in codimension 2.

In general, the above set $U_f$ may be empty. Nonetheless, the reader should keep in mind at this stage 
that our goal is to classify fibres of a Mori fibration (cf. Definition \ref{mfs.def}). In particular, in the following sections we will often be able to assume that either the singularities of $X$ are terminal, hence the general fibre of $f$ will also be terminal, or by construction the fibres of $f$ will have terminal singularities.

Finally, let us comment on the $\Q$-factoriality of the fibres. In \cite[Thm. 12.1.10]{kollarmori}, the authors show that in a flat family $T \to S$ of varieties with rational singularities and smooth in codimension $2$, there is an open subscheme $W \subset S$ parametrizing $\Q$-factorial fibres. These hypotheses are clearly satisfied in the case of a family of Fano varieties with terminal singularities or with klt singularities and smooth in codimension 2.
The problem is that even assuming that $T$ is $\Q$-factorial, then the set $W$ may be empty as the following example shows. 
\end{remark}
\begin{example}
Let $C$ be a projective curve of genus $g \geq 1$ and let $C'$ be a degree $2$ \'etale cover of $C$ and call $i$ the associated involution.
Let $Q$ be a quadric in $\mathbb{P}^4$ of rank 3, i.e., the projective cone of $\mathbb{P}^1 \times \mathbb{P}^1$ for the Segre embedding. The action of $\Z/2\Z$ switching the factors on $\mathbb{P}^1 \times \mathbb{P}^1$ lifts to an automorphism of $Q$, which we denote by $g$. Define $T$ to be the quotient of $Q \times C'$ by the involution $(g, i)$. Then $T$ maps to $C$ and the fibres are all isomorphic to $Q$. The class group of $Q \times C'$
has rank 3 as it is isomorphic to the direct sum of the class groups of $Q$ and $C'$. On $T$, the monodromy induced by the quotient reduces the rank of the class group to $2$. The Picard group of $T$ instead has rank $2$ as the Picard group of $Q \times C'$. We conclude that the morphism $T\to C$ is an isotrivial fibration of relative Picard number $1$ such that $T$ is $\mathbb{Q}$-factorial but none of the fibres is $\mathbb{Q}$-factorial.
\end{example}

On $U^{top}$, the monodromy action on $R^2f_*\Q$ can be explicitly described as follows. Take the class of a loop $\g$ in $\pi_1(U,t)$. Pull back $X$ to the interval $I=[0,1]$ via $\g$. Trivialise the family $X_I$. The identification between the fibres over the endpoints of the interval $I$ is a homeomorphism of $F_t$: such homeomorphism gives the monodromy action. If we change the representative of $[\g]$ we may change the automorphism, but not its action on $H^2(F_t,\qq)$. See \cite[Section 9.2.1, 15.1.1 and 15.1.2]{Voisen} for more details. 

In general, we will consider the monodromy action of the fundamental group of different open subsets of $Y$. However, if we have a normal variety $W$ with a closed subvariety $Z$, the natural morphism
$$\pi_1(W\setminus Z)\to \pi_1(W) $$
is always surjective (cf. \cite[0.7 (B)]{FL}). As we will be investigating Mori fibre spaces (see Definition \ref{mfs.def}), restricting to open subsets of $U^{top}$ is not going to change the monodromy action very much and in particular it will not affect the dimension of the invariant part that is of dimension $1$.

Restricting a cohomology class to each fibre we have a morphism
\[H^2(X,\qq)\rightarrow H^0(U,R^2f_*\qq).\]
By evaluating the section at $t$ we get an isomorphism
\[ H^0(U,R^2f_*\qq)\rightarrow H^2(F_t,\qq)^{\pi_1(U,t)}.\]
Composing the above maps we obtain a morphism
\[\rho\colon H^2(X,\Q)\rightarrow H^2(F_t,\Q)^{\pi_1(U,t)}.\]
When $X$ and $Y$ are smooth, a result due to Deligne (cf. \cite[Theorem 16.24]{Voisen}) states that $\rho$ is surjective for every $t \in U$, providing a method to easily identify the invariant part of cohomology. Deligne's result is rather general and it concerns all cohomology groups and cohomology classes which are not, in general, algebraic. However, in our case, the cohomology classes we are interested in are just the classes of divisors, so we can generalise Deligne's result to the singular case for $N^1_\Q$.
\begin{theorem}\label{CorGen}
Let 
\[f\colon X\rightarrow Y\]
be a dominant morphism of projective normal varieties, where $X$ is $\qq$-factorial with rational singularities. Assume that $-K_X$ is $f$-ample. Take $U=U_f$ as in Theorem \ref{TopTriv} and assume it contains a point $t$. Then the restriction map
\[\rho \colon N^1(X)_{\Q}\rightarrow N^1(F_t)_{\Q}^{\pi_1(U,t)}\]
is surjective for every $t$ in $U$.
\end{theorem}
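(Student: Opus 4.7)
The strategy is to bootstrap the classical smooth version of Deligne's invariant-cycles theorem via resolution of singularities, exploiting the fact that on a Fano variety with klt singularities the Neron-Severi group embeds into $H^2$ as the space of $(1,1)$ Hodge classes. By Theorem \ref{TopTriv}, the sheaf $R^1f_\ast\G\otimes\qq$ is a local system on $U$ with finite monodromy, so the invariant subspace $N^1(F_t)_\qq^{\pi_1(U,t)}$ is canonically identified for all $t\in U$ (the monodromy acts trivially on it). It is therefore enough to establish surjectivity of $\rho$ at one sufficiently general point $t_0\in U$.

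First I would choose resolutions $\pi\colon\widetilde X\to X$ and $\sigma\colon\widetilde Y\to Y$ with $\widetilde X,\widetilde Y$ smooth projective, together with a morphism $\widetilde f\colon\widetilde X\to\widetilde Y$ lifting $f$ (performing further blow-ups of $\widetilde X$ if needed). Let $\widetilde U\subset\sigma^{-1}(U)$ be an open subset over which $\widetilde f$ is smooth and pick a general $\widetilde t_0\in\widetilde U$ mapping to $t_0$. The fibre $\widetilde F_{\widetilde t_0}$ is a smooth birational model of $F_{t_0}$ via the restriction of $\pi$. Since $F_{t_0}$ is $\qq$-factorial, pullback gives an inclusion $N^1(F_{t_0})_\qq\hookrightarrow N^1(\widetilde F_{\widetilde t_0})_\qq$, and since $F_{t_0}$ is Fano klt, Kawamata--Viehweg vanishing yields $H^1(\widetilde F_{\widetilde t_0},\oo)=0$, so $c_1\colon N^1(\widetilde F_{\widetilde t_0})_\qq\hookrightarrow H^2(\widetilde F_{\widetilde t_0},\qq)$ identifies the Neron-Severi group with the rational $(1,1)$ Hodge classes.

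Now I would apply the Hodge-theoretic form of Deligne's invariant-cycles theorem \cite[Theorem~16.24]{Voisen} to the smooth morphism $\widetilde f$ over $\widetilde U$ to get a surjection of mixed Hodge structures
\[H^2(\widetilde X,\qq)\twoheadrightarrow H^2(\widetilde F_{\widetilde t_0},\qq)^{\pi_1(\widetilde U,\widetilde t_0)}.\]
An invariant class $\alpha\in N^1(F_{t_0})_\qq^{\pi_1(U,t_0)}$ then gets sent, via $c_1\circ\pi^\ast$, to an invariant $(1,1)$ Hodge class on $\widetilde F_{\widetilde t_0}$, which lifts to a $(1,1)$ Hodge class on $\widetilde X$; by the Lefschetz $(1,1)$-theorem this class is represented by an actual divisor class $\widetilde D\in N^1(\widetilde X)_\qq$.

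Finally, I would descend $\widetilde D$ to $X$: because $X$ is $\qq$-factorial with rational singularities, the push-forward $\pi_\ast\widetilde D$ defines a class in $N^1(X)_\qq$, and for a sufficiently general $t_0$ the $\pi$-exceptional divisors restrict trivially to $F_{t_0}$, so $\rho(\pi_\ast\widetilde D)=\alpha$, yielding the required preimage. The main obstacle is the Hodge-theoretic lifting step: upgrading the cohomological preimage produced by Deligne's theorem to a genuine algebraic class on $\widetilde X$. This is precisely where the fact that Deligne's result is a morphism of Hodge structures, together with Lefschetz $(1,1)$ on the smooth projective variety $\widetilde X$, does the work; the remaining steps are routine properties of resolutions of $\qq$-factorial varieties with rational singularities.
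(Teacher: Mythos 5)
Your proof is correct in outline, but it takes a genuinely different route from the paper's. The paper does not resolve singularities or invoke Hodge theory at all: it uses the Koll\'ar et al.\ sheaf $\mathcal{GN}^1(f^{-1}(U_f)/U_f)$, already set up for Theorem \ref{TopTriv}, together with \cite[Corollary 12.2.9]{kollarmori}, which identifies the global sections of that local system with $N^1(f^{-1}(U_f)/U_f)_\qq$ --- in effect an algebraic, divisor-level invariant cycle theorem --- and then extends divisors from $f^{-1}(U_f)$ to $X$ using $\qq$-factoriality. Your argument instead makes literal the remark preceding the theorem in the paper (that this is ``Deligne's theorem in the singular case for $N^1_\qq$''): you pass to a resolution $\widetilde f\colon\widetilde X\to\widetilde Y$, apply the classical global invariant cycle theorem, use strictness/semisimplicity of polarizable Hodge structures plus Lefschetz $(1,1)$ to produce an algebraic preimage on $\widetilde X$, and descend by $\pi_\ast$ and $\qq$-factoriality. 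Both work; yours is self-contained modulo classical Hodge theory, while the paper's is shorter and avoids the bookkeeping your route requires. Two such points deserve an explicit sentence in your write-up: (i) the compatibility of the monodromy of $\pi_1(\widetilde U,\widetilde t_0)$ on $H^2(\widetilde F_{\widetilde t_0},\qq)$ with that of $\pi_1(U,t_0)$ on $N^1(F_{t_0})_\qq$ under $\pi^\ast$ --- this holds after shrinking to a common open set over which $\pi$ is a fibrewise birational morphism of families, and then one uses surjectivity of $\pi_1$ of an open subset onto $\pi_1$ of the ambient set so that the invariant subspaces are unchanged; and (ii) the descent step, where the precise statement is not that exceptional divisors ``restrict trivially'' but that for general $t_0$ each $\pi$-exceptional divisor meets $F_{t_0}$ in codimension at least two, so that $\bigl(\pi_\ast\widetilde D\bigr)\big|_{F_{t_0}}=(\pi|_{\widetilde F})_\ast\bigl(\widetilde D|_{\widetilde F}\bigr)=\alpha$ by the projection formula. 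Your reduction to a single general $t_0$ via the local system of Theorem \ref{TopTriv} is sound and is also implicitly how the paper obtains the conclusion ``for every $t$ in $U$''.
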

\begin{proof}
By Theorem \ref{TopTriv} and \cite[Corollary 12.2.9]{kollarmori}, $\mathcal{GN}^1(f^{-1}(U_f)/U_f)$ is a local system on $U_f$ whose global sections are
$N^1(f^{-1}(U_f)/U_f)_\qq$. By general properties of local systems and the exponential sequence (cf. \cite[Lemme 16.17]{Voisen}) this vector space can be identified with $H^2(F_t, \qq)^{\pi_1(U_f, t)}$ which is isomorphic to $N^1(F_t)_{\Q}^{\pi_1(U, t)}$, since $F_t$ is Fano with terminal singularities and by \cite[0.7 (B)]{FL}.
Hence, 
\[
N^1(f^{-1}(U_f)/U_f)_\qq \cong N^1(F_t)_{\Q}^{\pi_1(U, t)}
\]
where the isomorphism is given by the restriction to $F_t$. By definition of $N^1(f^{-1}(U_f)/U_f)_\qq$, 
there is a surjection 
\[
N^1(f^{-1}(U_f))_\qq \twoheadrightarrow  N^1(F_t)_{\Q}^{\pi_1(U, t)}
\]
again given by restricting to $F_t$.

The $\Q$-factoriality of $X$ implies that the surjectivity of the restriction map extends also to $N^1(X)_\qq$, proving the statement of the theorem.
\end{proof}
\end{subsection}

\begin{subsection}{The monodromy action and the MMP}
In this section we show that the monodromy action preserves some information about the birational geometry of terminal $\mathbb{Q}$-factorial Fano varieties (a general reference on this topic is \cite{dFH12}). As in the previous section we think that $F_t$ is a Fano variety which appears as a fiber in a given morphism $f \colon X \to Y$ with $-K_X$ relatively ample over the open set $U_f$ defined in Theorem \ref{TopTriv}.

First of all, the monodromy preserves the intersection pairing. Indeed, it can be seen as an action on the cohomology algebra $H^*(F_t,\Z)$. The class of the canonical divisor of $F_t$ is, by adjunction, the restriction $K_X|_{F_t}$; hence it is preserved by the monodromy. Call $n$ the dimension of $F_t$. The $\Q$-valued bilinear form
\[b(A,B):=(K_{F_t})^{n-2}\cdot A \cdot B\]
on $N^1(F_t)_\qq$ is equally preserved. 

What else is preserved?

When all fibres are smooth Fano varieties of the same dimension, Wi\'{s}niewski proved in \cite{Wis1} and \cite{Wis2} that the nef cone is locally constant.  In the terminal $\qq$-factorial case, \cite[Theorem 6.8]{DH} shows that the movable and pseudoeffective cones are preserved.
As $F_t$ is Fano, it follows that it is also a Mori dream space, \cite[Cor. 1.3.2]{bchm}. In particular, the movable cone $\Mov(F_t)$ admits a finite decomposition into polyhedral cones, called Mori chambers decomposition, \cite[Prop. 1.11]{hu.keel}. The cone $\Nef(F)$ is one of the chambers in the decomposition.

In \cite[Theorem 6.9]{DH}, the authors shows that the Mori chambers are locally constant in the following cases:
\begin{itemize}
\item families of $3$-dimensional Fano varieties;
\item families of $4$-dimensional and $1$-canonical Fano varieties;
\item families of toric Fano varieties.
\end{itemize}
In these cases the nef cone of fibres is locally constant in the family. In \cite{Totaro}, Totaro has shown that under weaker assumptions than the one just illustrated the Mori chambers are not preserved by the monodromy action.

We can prove the following theorem.
 \begin{theorem}\label{Wis_nef}
Keep notation as in Theorem \ref{TopTriv}. Up to shrinking $U_f$ there exists an open set $U'_f \subset U_f \subset Y$ on which the monodromy action preserves $\Nef(F_t)$. 
\end{theorem}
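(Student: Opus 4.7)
My approach is to dualise the problem: I will show that, after shrinking, the Mori cone $\NE(F_t)$ is a locally constant subset of the local system dual to $R^1f_\ast\G\otimes\qq$, so its polar $\Nef(F_t)$ is monodromy-invariant. The construction is guided by the monodromy-fixed ample class produced by any $f$-ample divisor on $X$.

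First I would choose an $f$-ample Cartier divisor $H$ on $X$. For every $t\in U_f$ the class $h_t:=[H|_{F_t}]\in N^1(F_t)_{\qq}$ is ample, and by Theorem~\ref{CorGen} it lies in the monodromy-invariant subspace $N^1(F_t)_{\qq}^{\pi_1(U_f,t)}$; in particular $h_t$ is a monodromy-invariant interior point of $\Nef(F_t)$. This class plays an auxiliary role: it witnesses that the invariant cone I eventually produce is nontrivial and of full dimension, and distinguishes $\Nef(F_t)$ from any other chamber of $\Mov(F_t)$ that the monodromy could a priori exchange with it.

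Next, since $F_t$ is a terminal $\qq$-factorial Fano variety, the cone theorem makes $\NE(F_t)$ rational polyhedral, and Mori's bend-and-break bounds each of its extremal rays by a rational curve of $-K_{F_t}$-degree at most $2\dim F_t$. Applying Koll\'ar's relative Hom (or Chow) scheme to $f\colon X\to Y$ over $U_f$, such curves in fibres are parametrised by a scheme $p\colon\mathcal H\to U_f$ of finite type. By Noetherian stratification of $p$ I then shrink $U_f$ to a dense open $U'_f$ on which the number of irreducible components of $p$ is locally constant and on which the $N_1(F_t)_{\qq}$-class attached to each component is a locally constant section of the local system dual to $R^1f_\ast\G\otimes\qq$; a further shrinking guarantees that the subset of classes actually spanning extremal rays of $\NE(F_t)$ is locally constant in $t$ as well. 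On this $U'_f$, the finite set of extremal rays of $\NE(F_t)$ is a locally constant subset of the local system, hence preserved by $\pi_1(U'_f,t)$; by duality, so is $\Nef(F_t)$ inside $N^1(F_t)_{\rr}$.

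The main obstacle I expect is the stratification step: bend-and-break only gives a pointwise bound, and one must promote it to a genuinely locally constant family of extremal classes, examining carefully the flatness of $p$ after shrinking and the constructibility of the locus on $\mathcal H$ where a given component produces extremal curves. A possibly cleaner alternative would be to exploit the finiteness of the monodromy provided by Theorem~\ref{TopTriv}: after a finite \'etale cover $\tilde U\to U'_f$ trivialising the monodromy, Theorem~\ref{CorGen} would force every class in $N^1(\tilde F_t)_{\qq}$ to extend to $\tilde X$, so one could run a relative MMP-type argument on $\tilde X\to\tilde U$ to show that $\Nef(\tilde F_t)$ is locally constant in $\tilde t$ and then descend the invariance back to $U'_f$ as a $\Gal(\tilde U/U'_f)$-stable cone.
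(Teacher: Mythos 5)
Your proposal follows essentially the same route as the paper: bound the $-K$-degree of generators of extremal rays, parametrise the corresponding rational curves in fibres by a scheme of finite type over the base, and shrink the base so that every extremal class is represented by curves in \emph{every} fibre, whence $\NE(F_t)$ (and dually $\Nef(F_t)$) is monodromy-invariant. The paper handles the stratification step you flag as the main obstacle in the way your closing alternative suggests: it first passes to the finite \'etale cover $V\to U_f$ trivialising the monodromy, bounds the generators of the relative cone $\NE(X_V/V)$ via Kawamata's rationality theorem, and then simply removes the images of those components of $\bigcup_{\beta}\mathrm{Mor}(\pp^1,X_V/V,\beta)$ that fail to dominate $V$, so that over the remaining open set every extremal class moves and the restriction $N_1(X_W/W)_\qq\to N_1(F_t)_\qq$ identifies $\NE(X_W/W)$ with $\NE(F_t)$.
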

\begin{proof}
By Theorem \ref{TopTriv}, there is a finite \'{e}tale cover $p\colon V\rightarrow U_f$ trivialising the monodromy action. Let us denote by $X_V:=V \times_{U_f} X$ the pull-back of the family of Fano varieties over $U_f$ to $V$ with $f_p\colon X_V \to V$ the associated map.

Denote by $N_1(X_V/V)$ the quotient $N_1(X_V)/f_p^\ast N_1(V)$. For every fibre $F_t$ of $f_V$, the restriction map $N_1(X_V/V)_{\mathbb{Q} }\to N_1(F_t)_{\mathbb{Q}}$ is an isomorphism.

Since $-K_X$ is $f$-ample, by the cone theorem we know that $\NE(X_V/V)$ - the cone generated by effective classes of curves - is rational polyhedral: in particular it is generated by a finite number of rays $R_1, \ldots, R_k$ and each ray is generated by the class of a curve $C_i$. Any $C_i$ is represented by an integral lattice point in $N_1(X_V/V)_{\qq}$.

By Kawamata's rationality theorem, \cite[Thm. 3.5]{SingularityMMP}, the primitive generators of $\NE(X_V/V)$ are integral lattice points that lie between the hyperplanes $H_0=\{v \in N_1(X/Y) \ | \ K_X\cdot v =0\}$ and $H_{2n}=\{v \in N_1(X/Y) \ | \ K_X\cdot v =2n\}$, where $n=\dim X$. The number of integral points contained in the closure of the relative cone of effective curves, $\NE(X_V/V)$, laying between $H_0$ and $H_{2n}$ is of course finite. Let us denote the set of such points by $C$.

Now, to $C$ we can associate the variety 
\[
M:=\bigcup_{\beta \in C} \mathrm{Mor}(\mathbb{P}^1, X_V/V, \beta)
\] 
of morphisms from $\mathbb{P}^1$ to $X_V$, contracted by $X_V\rightarrow V$, whose images have class belonging to $C$. This is a quasi-projective scheme of finite type that comes equipped with a proper map to $\pi: M \to V$. Let $N$ be the union of those irreducible components of $M$ that do not dominate $V$ via $\pi$, and let $T=\pi(N)$. Let us remark that $T$ is a Zariski closed set of $V$, as $M$ is a variety of finite type proper over $V$.

Define $W:= V \setminus T$ and $X_W= f^{-1}(W)$. The claim is that $p(W)$ is the Zariski open set we are looking for. In fact, consider the rational polyhedral cone $\NE(X_W/W)$. As above, the extremal rays are finite and their generators have bounded degree with respect to $-K_X$. As every component of $M$ that does not belong to $N$ dominates $V$, then the classes that generate the extremal rays of $\NE(X_W/W)$ move over $W$. But this means that the restriction $N_1(X_W/W)_{\qq} \to N_1(F_t)_{\qq}$ identifies $\NE(X_W/W)$ and $\NE(F_t)$. The inclusion $\NE(F_t) \subset \NE(X_W/W)$ follows in fact directly from the definition, while the opposite one is a consequence of the last observation.
\end{proof}
Our result does not provide any effective method to characterise the open subset where the monodromy preserves the nef cone. 

To ease the notation, from now on we will denote by $U$ the open subset constructed in Theorem \ref{Wis_nef}. \newline
Take an element $g$ of $\pi_1(U,t)$. Assume it exchanges two maximal faces $
\mathcal{G}_1$ and $\mathcal{G}_2$ of the nef cone. These faces give contractions 
\[\pi_i\colon F_t \rightarrow G_i,\]
which correspond to the first step in a run of the $K_{F_t}$-MMP.  
The pull-back via $\pi_i$ identifies $\mathrm{Nef}(G_i)$ with the 
face $\mathcal{G}_i$. The first step of such a run is one of the following three possibilities.
\begin{description}
\item[Divisorial contraction] $\pi: F_t \to G$ is birational, the exceptional locus is an irreducible divisor 
and all the curves in the fibres are numerically equivalent, i.e. $\rho(F_t/G)=1$;
\item[Flipping contraction] $\pi: F_t \to G$ is birational, the exceptional locus is of codimension at least $2$ and 
$\rho(F_t/G)=1$;
\item[Mori fibre contraction] $\pi: F_t \to G, \; \dim F_t > \dim G$ and $\rho(F_t/G)=1$.
\end{description} 
The information whether the map $\pi_i$ is a divisorial contraction or a flipping contraction or a Mori fibration is encoded in the cohomology ring. 
As a consequence, each of the above types is preserved under monodromy.
\begin{theorem}\label{Monodromy_And_MMP}
Using the same notations as above. Let us consider the flat family of terminal Fano's over the open set $U:=U'_f$ defined in Theorem \ref{Wis_nef}. Let $t \in U$ be a point and let $F_t$ be the fibre over $t$. Assume that the monodromy action identifies two, not necessarily maximal, faces 
$\mathcal{G}_1$ and $\mathcal{G}_2$ of the nef cone of $F_t$. Then, the two maps
\[\pi_1 \colon F_t \to G_1\]
\[\pi_2 \colon F_t \to G_2\]
correspond to the same kind of step in the MMP. In the case of the divisorial contraction, the monodromy action exchanges the exceptional divisors.

Moreover, the varieties $G_1$ and $G_2$ (and the morphisms $\pi_1$ and $\pi_2$) are deformation equivalent.
\end{theorem}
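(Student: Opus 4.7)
The plan is to detect each MMP step type (divisorial, flipping, Mori fibre) by purely numerical data preserved by monodromy on $N^1(F_t)_\qq$, and then to realise the deformation equivalence of $G_1$ and $G_2$ as fibres of a single family on the finite \'etale cover trivialising the monodromy from Theorem \ref{TopTriv}. Throughout we use that monodromy acts as a ring automorphism of $H^\ast(F_t,\qq)$, fixes $K_{F_t}$, and preserves the nef cone by Theorem \ref{Wis_nef} together with the pseudoeffective cone (as noted in the discussion preceding that theorem).

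Fix a class $D_i$ in the relative interior of $\mathcal{G}_i$; since $F_t$ is a Mori dream space, $D_i$ is semi-ample and equals $\pi_i^\ast A_i$ for some ample $A_i$ on $G_i$. Setting $n = \dim F_t$, the contraction $\pi_i$ is of Mori fibre type exactly when $(D_i)^n = 0$; when $(D_i)^n > 0$ the map is birational, and by the projection formula it is divisorial iff the hyperplane $\{v : v \cdot (D_i)^{n-1} = 0\}$ contains a non-zero pseudoeffective class, since for a flipping contraction the exceptional locus has codimension $\geq 2$ and no divisor is contracted. An element $g \in \pi_1(U,t)$ with $g(\mathcal{G}_1) = \mathcal{G}_2$ sends $D_1$ to such a class in $\mathcal{G}_2$ and preserves both top intersection numbers and the pseudoeffective cone, so each alternative is invariant; hence $\pi_1$ and $\pi_2$ are of the same MMP type.

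When the contraction is divisorial, the classes of the exceptional prime divisors are precisely the extremal rays of the pseudoeffective cone contained in the hyperplane $\{v : v \cdot (D_i)^{n-1} = 0\}$; in the elementary case $\rho(F_t/G_i) = 1$ there is a single such class up to positive scaling. Monodromy therefore permutes these classes and sends $[E_1]$ to $[E_2]$, which gives the exchange of exceptional divisors at the cohomological level.

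For the deformation equivalence, let $p \colon V \to U$ be a connected finite \'etale Galois cover trivialising the monodromy, as provided by Theorem \ref{TopTriv}, and let $\tau_g \in \Gal(V/U)$ correspond to $g$. On $X_V \to V$ the fibrewise nef cones glue into a globally constant system and the face $\mathcal{G}_1$ spreads to a globally defined face $\widetilde{\mathcal{G}}$ of the fibrewise nef cones. After shrinking $V$ if necessary, a global relative semi-ample representative of $\widetilde{\mathcal{G}}$, together with a relative base-point-free theorem, yields a contraction $\psi \colon X_V \to \mathcal{Y}$ over $V$ whose fibre over each $s \in V$ is the contraction of $F_s$ associated to the transported face. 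Choosing a preimage $t_1 \in V$ of $t$, the fibre of $\mathcal{Y} \to V$ over $t_1$ is $G_1$ while that over $\tau_g(t_1)$ is $G_2$, since the deck transformation $\tau_g$ realises the monodromy element that identifies $\mathcal{G}_1$ with $\mathcal{G}_2$. Connectedness of $V$ then exhibits $G_1$ and $G_2$ as fibres of a single algebraic family, giving the deformation equivalence, and $\psi$ simultaneously deforms $\pi_1$ into $\pi_2$. The main obstacle is precisely the construction of $\psi$: one must upgrade the face-invariance supplied by Theorem \ref{Wis_nef} to a globally defined relative semi-ample class on $X_V/V$ and apply a relative base-point-free theorem for families of $\qq$-factorial terminal Fano varieties, which is the nontrivial technical step in the argument.
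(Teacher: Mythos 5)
Your overall strategy coincides with the paper's: the type of the contraction is read off from intersection numbers against a class in the relative interior of the face (generically finite iff $(D_i)^{\dim F_t}>0$, divisorial versus small detected by whether some effective/pseudoeffective class is killed by $(D_i)^{\dim F_t-1}$, and the exceptional divisor characterised as the unique effective class so killed), all of which is monodromy-invariant because monodromy preserves the ring structure, the nef cone and the pseudoeffective cone; and the deformation equivalence is obtained by passing to the finite \'etale cover $V\to U$ trivialising the monodromy and contracting a single relative face over all of $V$, so that $G_1$ and $G_2$ appear as fibres over $t_1$ and $\tau_g(t_1)$ of one flat family. This is exactly the paper's proof of the first two assertions.

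The one place where you stop short is precisely where the paper does the real work, and you should be aware that it is not an optional refinement but the substance of the last claim. Two things need to be supplied. First, the existence of an honest Cartier divisor $H$ on $X_V$ whose restriction to each fibre lies in the relative interior of the transported face: a flat section of the local system of N\'eron--Severi groups is not yet a divisor class on the total space, and one must invoke the surjectivity of $N^1(X_V)_\qq\to N^1(F_s)_\qq$, which holds on $V$ because the monodromy there is trivial (this is the paper's Theorem \ref{CorGen} / Theorem \ref{Deligne}). Second, rather than appealing to a relative base-point-free theorem after shrinking $V$ (shrinking is dangerous here, since you must keep both $t_1$ and $\tau_g(t_1)$ in the base), the paper constructs $\tilde X={\rm Proj}_{\mathcal O_V}\bigl(\bigoplus_{n}{f_p}_\ast\mathcal O_{X_V}(nH)\bigr)$ directly and proves it is flat over all of $V$: since the fibres are terminal Fano and $H$ restricts to a nef class on every fibre, Kawamata--Viehweg vanishing gives $H^i(F_s,\mathcal O(nH|_{F_s}))=0$ for $i>0$, so cohomology and base change makes each ${f_p}_\ast\mathcal O_{X_V}(nH)$ locally free, and finite generation of the relative section algebra follows from Castelnuovo--Mumford regularity after replacing $H$ by a suitable multiple. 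Flatness of $\pi\colon\tilde X\to V$ is what turns ``$G_1$ and $G_2$ are fibres of one family over a connected base'' into the asserted deformation equivalence. With these two points filled in, your argument becomes the paper's.
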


\begin{proof}
To ease the notation throughout the proof we will indicate $F_t$ simply by $F$.

To prove the first part of the theorem, we do a case-by-case analysis.
\begin{description}
	\item[Divisorial contraction] As $\pi_i \colon F_t \to G_i$ is birational, given a divisor $H$ in the relative interior of $\pi_i^\ast\Nef(G_i)$, we have $(H^{\dim F_t}) >0$.  The exceptional locus is an irreducible divisor, call it $D_i$. It is clear that $D_i$ is the only 
effective divisor on $F$ such that $(H^{\dim F_t-1}\cdot D_i)= 0$, for every $H$ in the relative interior of the 
corresponding face. Moreover, we can characterise the dimension of $\pi_i(D_i)$ as the maximal integer $k$ 
such that $(H^{k}\cdot D_i) \neq 0$, i.e., the numerical dimension of the restriction of $H$ to $D_i$.

\item[Flipping contraction] as $\pi_i \colon F_t \to G_i$ is birational, given a divisor $H$ in the relative interior of $\pi_i^\ast\Nef(G_i)$, we have 
$(H^{\dim F_t}) \neq 0$. The smallness of $\pi_i$ is equivalent to the fact that for every effective divisor $E \in \mathcal{G}_i$ we have 
$(H^{\dim F_t-1}\cdot E) > 0$. Both these conditions are preserved by the monodromy action (as the effective cone is 
preserved).

\item[Mori fibre contraction] as $\dim F_t > \dim G_i$, given a divisor $H$ in the relative interior of 
$\pi_i^\ast\Nef(G_i)$, we have $(H^{\dim F_t}) = 0$ and $\dim G_i$ is the maximum integer such that $(H^k) \neq 0$.
Hence, in this case, we also know that the dimension of the base of the fibration is preserved by monodromy.
\end{description} 

Let us prove that there exists a flat deformation from $G_1$ to $G_2$. The monodromy action is finite, so after a finite \'{e}tale cover $p\colon V \to U$, we obtain a family 
\[
f_p\colon X_V \to V
\]
 with trivial monodromy action on the fibres. This means that the restriction morphism 
 $N^1(X_V)_\qq \to N^1(F)_\qq$ is surjective for every fibre of $f_p$. 
As $\mathcal{G}_1$ and $\mathcal{G}_2$ are two faces identified under the 
monodromy action on the family $X_U \to U$, we can find two point $t_1, t_2$ on $V$ 
and a Cartier divisor $H$ on $X_V$ whose 
restrictions to $F_{t_1}$ (resp. $F_{t_2}$) lie in the relative interior of $\mathcal{G}_1$ (resp. $\mathcal{G}_2$).

We want to construct the variety 
$\tilde{X} := {\rm Proj}_{\mathcal{O}_{V}}(\bigoplus_{n \in \N}{f_p}_\ast \mathcal{O}_{X_V}(nH))$,
getting a morphism relative to $V$
\begin{displaymath}
 \xymatrix {
  X_V \ar[rr]^{g} \ar[dr]_{f_p} && \tilde{X} \ar[dl]^{\pi} \\
  & V
}
\end{displaymath}

The fibre of $\pi$ over $t_i$ is $G_i$. The restriction of $g$ over $t_i$ is the contraction given by the face
$\mathcal{G}_i$. We are going to prove that $\tilde{X}$ and $\pi$ exist and they are flat over $V$.
Since $H$ is Cartier and $f_p$ is flat, the sheaves $\mathcal{O}_{X_V}(nH)$ are flat over $V$.
Moreover, as any fibre $F_s$ is terminal $H^i(F_s, \mathcal{O}_{F_s}(nH\vert_{F_s}))=0, i > 0, n >0$: we have assumed that $H\vert_{F_1}$ is nef and since the monodromy action on the family $X_U\rightarrow U$ preserves the nef cone, we have that $H\vert_{F_s}$ is nef for every $s \in V$.
So by the classical theory of cohomology and base change, \cite[Cor. 2]{Mumford}, the sheaves $\pi_\ast\mathcal{O}_{X_V}(nH)$ are locally free sheaves. Hence $\bigoplus_{n \in \N}{f_p}_\ast \mathcal{O}_{X_V}(nH)$ is a flat sheaf of algebras. The finite generation now follows from Castelnuovo-Mumford regularity, up to passing to a sufficiently large multiple of $H$, cf. \cite[Example 1.8.24]{laz1.book}.

Hence $\pi$ is flat and hence $G_1$ is deformation equivalent to $G_2$, i.e.,
we have an actual flat deformation of the contraction given by $\mathcal{G}_1$ to the contraction given by $\mathcal{G}_2$. 
\end{proof}

The previous discussion motivates the following definition.
\begin{definition}[The groups $\HMon$ and $\Mon$]\label{MonGroup}
Let $F$ be a normal $n$-dimensional Fano variety with terminal singularities.
We denote by $\Aut(F)^0$ the largest subgroup of $\Aut(F)$ which acts trivially on the N\'eron-Severi group with $\Q$-coefficients. Then the group $\HMon(F)$ is defined as 
\[
\HMon(F)\colon=\Aut(F)/\Aut(F)^0.
\]
The group $\Mon(F)$ is defined as the maximal subgroup of $\GL(N^1(F),\Z)$ which preserves:
\begin{itemize}
 \item the line spanned by the class of $K_F$;
 \item the $\Q$-valued bilinear form $\,b(A,B):=(K_F)^{n-2}\cdot A \cdot B$;
 \item the nef cone;
 \item the type of step (divisorial, flipping, fibre type) of the MMP associated to the facets of the nef cone and the exceptional divisor;
 \item the deformation type of the images of the maps defined by the faces of the nef cone.
 \end{itemize}
\end{definition}

\begin{rem}
The group $\HMon(F)$ is the subgroup defined as the image of the natural homomorphism
\[
\Aut(F) \to \Mon(F) \subseteq \GL(N^1(F),Z)
\]
\end{rem}

\begin{example}
As an example, let us discuss the case of a Fano manifold $F$ of Picard number $2$.

In this case there are only two possibilities: either $\Mon(F)$ is trivial or it has order $2$. 
In fact, as $\Nef(F)$ is invariant, if the action of $\Mon(F)$ is not the trivial one, then it 
permutes the two primitive vectors $v_1, v_2$ generating the extremal rays of the cone.
 
When this case occurs, the canonical divisor lies on the line spanned by $v_1+v_2$ and 
Theorem \ref{Monodromy_And_MMP} shows that the extremal rays of $\Nef(F)$ correspond to the same type of contraction in the $K_F$-MMP and the images of $F$ under the two contractions are deformation equivalent.

Another easy example is for del Pezzo surfaces (cf. Section \ref{sec_3fold}): the generic del Pezzo surface of 
degree 3 has no automorphisms ($\HMon$ is then trivial in this case), but $\Mon$ is certainly not trivial, as we will 
show in Section \ref{sec_crit2}.
\end{example}

The group $\Mon(F)$ is invariant under flat deformations which preserve the nef cone, whereas the group $\HMon(F)$ can jump when we deform $F$. Let us state the definitive form of our result for further references.

\begin{definition}[Isotrivial fibration] 
A morphism 
\[f\colon X\rightarrow Y\]
is isotrivial if there exists an open dense subset $U$ of $Y$ such that all the geometric fibres over $Y$ are isomorphic to a fixed variety.          
\end{definition}
Equivalently, every point $t$ in $U$ has an Euclidean neighbourhood over which $f$ is holomorphically trivial; moreover, there exists a finite \'etale cover $U' \to U$ such that $X_{U'} \cong X_U \times_U U'$ is a trivial family (cf. \cite[Proposition 2.6.10]{Ser}).     

When we are dealing with isotrivial fibrations, the identification between fibres over two distinct points is given by a holomorphic automorphism of $F_t$. This does not give us a homomorphism from $\pi_1(U,t)$ to $\Aut(F_t)$, but it allows us to assume that the action of $[\g]$ is induced by a (non-unique) element of $\Aut(F_t)$. We remark that isotriviality is a special condition. The only case when it is granted for free is when $F_t$ is rigid. To summarise, in these two sections we proved the following result.

\begin{theorem}\label{Deligne}
Let 
\[f\colon X\rightarrow Y\]
be a dominant morphism of projective normal varieties, where $X$ is $\qq$-factorial with rational singularities and on an open dense subset of $Y$ the fibres of $f$ are terminal and $\qq$-factorial. Assume that $-K_X$ is $f$-ample. Then there exists a maximal open dense subset $U=U'_f$ of $Y$ such that
\begin{itemize}
 \item the morphism $f\colon X_U \to U$ is a flat family of $\qq$-factorial Fano varieties with terminal singularities;
 \item for every $t$ in $U$, the monodromy action of $\pi_1(U,t)$ on $N^1(F_ t)_\qq$ factors through the finite group $\Mon(F_t)$ defined above. If the fibration is isotrivial, the monodromy factors through $\HMon(F_t)$.
\end{itemize}

Moreover, the restriction map
\[\rho \colon N^1(X)_{\Q}\rightarrow N^1(F_t)_{\Q}^{\pi_1(U,t)}\]
is surjective for every $t$ in $U$.
\end{theorem}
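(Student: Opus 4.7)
The plan is to assemble the results established earlier in the section into the single statement of Theorem \ref{Deligne}. First, I construct $U$ by successive shrinking. Since $Y$ is normal, its smooth locus $Y^{\mathrm{sm}}$ is open and dense; by hypothesis the fibres of $f$ are $\qq$-factorial and terminal over some open dense subset of $Y$; and by generic flatness the locus where $f$ is flat is open dense. Intersecting these three sets yields the non-empty open set $U_f$ of Theorem \ref{TopTriv}, on which $R^1 f_\ast \G \otimes \qq$ is a local system with finite monodromy. I then further restrict to the open set $U'_f \subset U_f$ provided by Theorem \ref{Wis_nef}, where the monodromy additionally preserves the nef cone of each fibre. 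Setting $U := U'_f$ gives the first bullet point of the theorem, and the maximality is automatic since the shrinking was forced by the hypotheses of each quoted result.

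Next, to see that the monodromy factors through $\Mon(F_t)$ I check the list in Definition \ref{MonGroup} item by item. The intersection pairing and the canonical class $K_{F_t}$ are preserved because monodromy acts by graded ring automorphisms of $H^\ast(F_t, \Z)$, and by adjunction $K_{F_t} = K_X|_{F_t}$ is the restriction of a global class, hence globally invariant; the bilinear form $b$ follows formally from this. Invariance of $\Nef(F_t)$ is exactly Theorem \ref{Wis_nef}, while invariance of the type of contraction attached to each face, the exceptional divisor in the divisorial case, and the deformation type of the image, is the content of Theorem \ref{Monodromy_And_MMP}. Finiteness of the monodromy image is granted by Theorem \ref{TopTriv}, so the factorisation through the finite group $\Mon(F_t)$ is established.

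For the isotrivial case, I invoke the description of isotrivial families recalled just before the statement: after a finite \'etale cover $U' \to U$, the pullback family is holomorphically trivial, of the form $F_t \times U'$, so parallel transport along any loop in $U$ is realised by a (non-canonical) biholomorphism of $F_t$. The induced action on $N^1(F_t)_\qq$ therefore lies in the image of the natural homomorphism $\Aut(F_t) \to \GL(N^1(F_t),\Z)$, and since this image is by definition $\HMon(F_t) = \Aut(F_t)/\Aut(F_t)^0$, the claim is immediate. Finally, the surjectivity of $\rho$ is obtained by applying Theorem \ref{CorGen} to $U_f$ and observing that passing from $U_f$ to the smaller open subset $U = U'_f$ only changes $\pi_1$ via a surjection $\pi_1(U,t) \twoheadrightarrow \pi_1(U_f,t)$, guaranteed by \cite[0.7(B)]{FL}, so that the invariant subspace of $N^1(F_t)_\qq$ can only shrink and surjectivity is preserved.

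The main obstacle in this argument is more bookkeeping than conceptual: Theorems \ref{TopTriv}, \ref{Wis_nef}, \ref{Monodromy_And_MMP} and \ref{CorGen} each refer to slightly different open subsets of $Y$, and the final $U$ must satisfy all of them simultaneously, so care is needed in verifying that each successive restriction neither destroys the properties already gained nor changes the relevant invariant subspaces. Once this is organised, every substantive ingredient has already been proved in the preceding subsections.
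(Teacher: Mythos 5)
Your proposal is correct and follows essentially the same route as the paper: the paper states Theorem \ref{Deligne} explicitly as a summary ("To summarise, in these two sections we proved the following result") of Theorems \ref{TopTriv}, \ref{CorGen}, \ref{Wis_nef} and \ref{Monodromy_And_MMP} together with the discussion of isotrivial fibrations, which is exactly the assembly you carry out. The only loose phrase is that the invariant subspace "can only shrink" when passing from $U_f$ to $U'_f$; since $\pi_1(U'_f,t)\to\pi_1(U_f,t)$ is surjective and the action factors through it, the invariant subspaces in fact coincide, which is what the paper uses and what your argument needs.
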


Since in the following we will focus on a specific class of Fano fibrations, let us recall a definition.

\begin{definition}\label{mfs.def}
Let $f\colon X \rightarrow Y$ be a dominant projective morphism of normal varieties. Then $f$ is called a {\it Mori fibre space} (or simply {\it MFS}) if the following conditions are satisfied:
\begin{enumerate}
\item $f$ has connected fibres, with $\dim Y < \dim X$;
\item $X$ is $\Q$-factorial with at most Kawamata log terminal singularities;
\item the relative Picard number of $f$ is one and $-K_X$ is $f$-ample.
\end{enumerate}
\end{definition}

We can finally introduce the key notion for our purposes.

\begin{definition}[Fibre-like]\label{fibre_like}
A normal Fano variety $F$ with terminal $\mathbb{Q}$-factorial singularities is said to be {\it fibre-like} if it can be realised as a fibre of a Mori Fibre Space $f\colon X \rightarrow Y$ over $U'_f$, where $U'_f$ is as in Theorem \ref{Deligne}.
\end{definition}

In all the examples we produce, the total space $X$ will have quotient singularities, which are well known to be klt.
\end{subsection}
\end{section}

\begin{section}{Criteria for Fibre-likeness}\label{sec_crit2}
\begin{subsection}{General Criteria}

In this section we present two criteria, one sufficient and one necessary, which detect the fibre-likeness in a rather general setting. The necessary criterion is based on Theorem \ref{Deligne}. When the Fano variety is rigid, we obtain a characterisation. Here by rigid we simply mean that $H^1(F, T_F)=0$.

\begin{theorem}[Sufficient Criterion]\label{sufficientCriterion}
A Fano variety $F$ with terminal $\mathbb{Q}$-factorial singularities and such that
\[
N^1(F)_\qq^{\Aut(F)} =\qq [K_{F}]
\]
is fibre-like.

Moreover, there exists a Mori fibre space $f\colon X \to Y$ such that the base $Y$ is a curve and the fibration is isotrivial.
\end{theorem}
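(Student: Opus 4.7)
The plan is to construct an explicit isotrivial Mori fibre space over a curve with general fibre $F$, in the form of a quotient
\[
X := (F \times C)/G \;\longrightarrow\; C/G =: Y,
\]
where $C$ is a smooth projective curve with a faithful action of a finite group $G$ of automorphisms of $F$.

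I would first produce a suitable $G$ as follows. The image of $\Aut(F)$ in $\GL(N^1(F),\zz)$ preserves the rational polyhedral nef cone of the Mori dream space $F$, hence permutes the finitely many primitive generators of its extremal rays; the pointwise stabilizer of this generating set in $\GL(N^1(F),\zz)$ is trivial, so the image is finite. One then extracts a finite subgroup $G$ of the scheme automorphism group of $F$ whose image in $\GL(N^1(F),\zz)$ equals this whole finite group; this is the one technical point of the construction, subtle when $\Aut^0(F)$ is positive-dimensional, but handled by choosing preimages of a generating set and averaging to obtain a finite lift. By hypothesis, $N^1(F)_\qq^G = \qq[K_F]$.

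Next, I would take any smooth projective curve $C$ with a faithful $G$-action that is free on a dense open subset $C^\circ \subset C$; such $C$ exists for every finite $G$ (for instance, any Galois $G$-cover of $\pp^1$ branched at points in general position). Let $G$ act diagonally on $F \times C$ and define $X$, $Y$, and $f\colon X \to Y$ as above. Since $F \times C$ is smooth and $G$ is finite, the total space $X$ has only quotient singularities, hence is klt and $\qq$-factorial; the morphism $f$ is flat and isotrivial by construction, and its fibre over any point of $Y$ lying under a free $G$-orbit in $C^\circ$ is isomorphic to $F$.

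It remains to check that $f$ is a Mori fibre space. Since $G$ is finite, taking $G$-invariants commutes with $\otimes \qq$, and the action on $N^1(C)_\qq \cong \qq$ is trivial, so
\[
N^1(X)_\qq \;\cong\; \bigl(N^1(F)_\qq\bigr)^G \oplus N^1(C)_\qq \;=\; \qq[K_F] \oplus \qq,
\]
whence $\rho(X/Y)=1$ after quotienting by $f^*N^1(Y)_\qq$. The restriction of $-K_X$ to a general fibre is $-K_F$, which is ample; combined with $\rho(X/Y)=1$ and the relative cone theorem, this forces $-K_X$ to be $f$-ample. Therefore $f$ is an isotrivial MFS over a curve realising $F$ as a general fibre lying in the open set $U'_f$ of Theorem \ref{Deligne}, so $F$ is fibre-like. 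The main obstacle is the first step: ensuring a finite lift $G$ exists in the scheme automorphism group; once it is secured, the computation of $\rho(X/Y)$ and the verification of the MFS conditions are essentially formal.
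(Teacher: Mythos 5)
Your overall strategy (find a finite subgroup $G\subset\Aut(F)$ with small invariant subspace, let it act on a curve $C$, and take $X=(F\times C)/G\to C/G$) is genuinely different from the paper's, and it founders exactly at the step you flag. You need a \emph{finite} subgroup of the automorphism group whose image in $\GL(N^1(F),\zz)$ still has invariants equal to $\qq[K_F]$, i.e. a finite lift of (enough of) $\HMon(F)=\Aut(F)/\Aut(F)^0$ through the extension $1\to\Aut(F)^0\to\Aut(F)\to\HMon(F)\to1$. ``Averaging'' does not produce such a lift: averaging is a cohomological splitting device that requires the kernel to be abelian (it shows $|Q|\cdot H^2(Q,A)=0$), whereas $\Aut(F)^0$ is a possibly non-abelian positive-dimensional linear algebraic group and the extension need not split. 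The fact you need is true in characteristic zero (every linear algebraic group contains a finite subgroup meeting all connected components), but it is a genuine theorem, proved via maximal compact subgroups and density of torsion, not a one-line reduction; as written, your construction has no group to act with. The paper's proof is engineered precisely to avoid this issue: it lifts generators $[f_1],\dots,[f_g]$ of $\HMon(F)$ to arbitrary, possibly infinite-order, automorphisms $f_i$, allows $G=\langle f_1,\dots,f_g\rangle$ to be infinite, and realises the required monodromy through the surjection $\pi_1(C,t)\to G$, $a_i\mapsto f_i$, $b_i\mapsto f_i^{-1}$, from a genus-$g$ curve (the commutator relation dies under this assignment); the quotient $F\times\hat C/\pi_1(C,t)$ is then only a priori analytic, and the paper pays for the dodge by proving projectivity separately via the $\Aut(F)$-equivariant anticanonical embedding. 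Your version, if repaired, would get projectivity for free.

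A secondary gap: since $F$ is only assumed terminal and $\qq$-factorial, $F\times C$ is not smooth, so $X$ does not merely have quotient singularities and ``klt and $\qq$-factorial'' needs an argument; $\qq$-factoriality of the total space of such a family is not formal, and the paper deduces it from \cite[Cor. 12.1.9]{kollarmori} using that $F$ is Cohen--Macaulay (hence $S_3$) and smooth in codimension $2$. The rest of your argument is sound: the $G$-action on $F\times C$ is free in codimension one (each nontrivial $g$ fixes finitely many points of $C$ and a proper closed subset of $F$), so $K_{F\times C}$ is the pullback of $K_X$; $N^1(F\times C)_\qq\cong N^1(F)_\qq\oplus\qq$ because $H^1(F,\mathcal{O}_F)=0$; and the computation of $\rho(X/Y)$ and the $f$-ampleness of $-K_X$ go through. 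With the finite-lift theorem quoted correctly and the $\qq$-factoriality supplied, your route gives a valid alternative proof.
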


\begin{remark}\label{suff.crit.rmk}
Before giving the proof, let us remark that $K_F$ is always fixed by $\Aut(F)$. 
In particular, if we fix $m \in \mathbb{N}$ s.t. $-mK_F$ is very ample, then the action of $\Aut(F)$ lifts faithfully to a linear action on $|-mK_F|$.
In other words, the hypothesis of the theorem is requesting that the subspace of $N^1(F)_{\Q}$ fixed by $\Aut(F)$ is minimal.
\end{remark}

\begin{proof}[Proof of Theorem \ref{sufficientCriterion}]
We know that $\HMon(F)$ is finite since the nef cone $\Nef(F)$ of $F$ is rational polyhedral and $\HMon(F)$ permutes its faces.

Pick a set of generators $[f_1],\dots , [f_g]$ of $\HMon(F)$. 
Call $G$ the sub-group of $\Aut(F)$ generated by $f_1,\dots , f_g$. 
Take a genus $g$ curve $C$ and denote by $a_i$ and $b_i$ the generators of its fundamental group. 
There is a unique relation between the $a_i$ and $b_i$ and it is the one on the product of commutators:
\[
a_1b_1a_1^{-1}b_1^{-1}a_2b_2a_2^{-1}b_2^{-1}\cdots a_gb_ga_g^{-1}b_g^{-1}=1.
\]

We define a surjective morphism
\begin{displaymath}
\begin{array}{cccc} 
\rho \colon &\pi_1(C,t) &\rightarrow &G.\\ 
& a_i& \mapsto &f_i \\
& b_i&\mapsto & f_i^{-1}
\end{array}
\end{displaymath}

Let $\hat{C}$ be the universal cover of $C$. We define 
\[
X:=F\times \hat{C} \Big/\pi_1(C,t),
\]
where $\pi_1(C,t)$ acts on $F$ via $\rho$. The action of $\pi_1(C, t)$ is free and properly discontinuous; 
hence $X$ is an analytic space with terminal singularities, i.e., the same type of singularities of $F$. 
The natural projection
\[
f\colon X\rightarrow C
\]
is an isotrivial fibration with fibre isomorphic to $F$.

Let us show that $X$ is projective. Indicating by $\phi_{|-mK_F|}\colon F \to \mathbb{P}^N$, $N=\dim |-mK_F|$ the embedding induced by the linear system 
of $|-mK_F|$, we know from Remark \ref{suff.crit.rmk} that the action of $\pi_1(C, t)$ extends also to $\mathbb{P}^N$ 
and the map $\phi_{|-mK_F|}$ is equivariant for the action.

Hence we have the following commutative diagram
\begin{displaymath}
\xymatrix{
F\times \hat{C} \ar[r]^{\phi_{|-mK_F|} \times id_{\hat{C}}} \ar[d] & \mathbb{P}^N\times \hat{C} \ar[d]\\
X=F\times \hat{C} \Big/\pi_1(C,t) \ar[r]^\psi \ar[d]^f & \mathbb{P}^N \times \hat{C} \Big/\pi_1(C,t) =Z \ar[d]^g\\
C \ar[r]^{id} & C }
\end{displaymath} 

As above, the singularities of $Z$ are the same as $\mathbb{P}^N$ and so $Z$ is smooth.
Moreover, as the action of $\Aut(F)$ is contravariant for $\phi_{|-mK_F| \times id_{\hat{C}}}$, $Z$ maps to $C$ and every fibre is isomorphic to $\mathbb{P}^N$. In particular, the anticanonical sheaf $\mathcal{O}_Z(-K_Z)$
is relatively ample over $C$. Since $C$ is itself projective, it follows that $Z$ is projective. 

The variety $X$ is $\mathbb{Q}$-factorial. This is simply a consequence of \cite[Cor. 12.1.9]{kollarmori}, 
as in view of the hypotheses of the theorem, $F$ is Cohen-Macaulay, in particular $S_3$, \cite[Thm. 5.22]{SingularityMMP} and the codimension of the singular locus of $F$ is at least $3$.

To finish the proof, we need to show that $\rho(X/C)=1$. We fix a point $t$ on $C$ and consider the fibre $F_t$ of $f$ over $t$. That is isomorphic to $F$ via the map $q$ defined in the above diagram. We will denote the inclusion of $F_t$ in $X$ by $\iota\colon F \hookrightarrow X$. As $\rho(C)=1$ it suffices to show that $\rho(X)=2$. We consider the sequence
\begin{eqnarray}\label{exact.sequence.4.4}
0\rightarrow N^1(C)_\qq \xrightarrow {f^*} N^1(X)_\qq\xrightarrow {\iota^*}N^1(F_t)_\qq^G\rightarrow 0.
\end{eqnarray}
If this sequence is exact, then $\rho(X)=2$.

The injectivity of $f^*$ follows by the connectedness of the fibres and the projection formula. The vector space $N^1(F_t)_\qq^G$ is generated by $-K_{F_t}$. By adjunction $\iota^*K_X=K_{F_t}$, so $\iota^*$ is surjective. We have $\im f^*\subseteq \ker\iota^*$. We need to show the opposite inclusion, but this follows by the same reasoning as in the proof of Lemma \ref{sur} from the seesaw principle.
\end{proof}
\begin{remark}
The same proof works verbatim in the case where $X$ is a Fano variety with rational $\mathbb{Q}$-factorial singularities of dimension $\geq 3$ and $F$ is smooth in codimension $2$, i.e. the singular locus has codimension at least $3$. 
In fact the assumption on the terminality of $X$ has been used only to allow us to use \cite[Cor. 12.1.9]{kollarmori}. But all the hypotheses in the corollary are verified with the weaker assumptions just explained.
\end{remark}

We state now a necessary criterion for $F$ to be fibre-like.

\begin{theorem}[Necessary Criterion]\label{necessaryCriterion}
A Fano variety with $\mathbb{Q}$-factorial terminal singularities for which
$$\dim N^1(F)_\qq^{\Mon(F)} > 1$$
is not fibre-like.
\end{theorem}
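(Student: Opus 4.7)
My plan is to argue by contradiction. Suppose $F$ is fibre-like, so that there exist a MFS $f\colon X \to Y$ and a point $t$ in the open set $U = U'_f \subseteq Y$ of Theorem~\ref{Deligne} with $F \cong F_t$. That theorem supplies two outputs simultaneously: the monodromy representation $\pi_1(U,t) \to \GL(N^1(F),\zz)$ factors through the finite group $\Mon(F)$, and the restriction
\[
\rho\colon N^1(X)_\qq \twoheadrightarrow N^1(F)_\qq^{\pi_1(U,t)}
\]
is surjective. Since the image of $\pi_1(U,t)$ in $\GL(N^1(F),\zz)$ is a subgroup of $\Mon(F)$, taking invariants reverses inclusions and yields
\[
N^1(F)_\qq^{\Mon(F)} \subseteq N^1(F)_\qq^{\pi_1(U,t)} = \im \rho.
\]
The whole task therefore reduces to showing that $\dim \im \rho \leq 1$.

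For this bound I would exploit the defining feature of a MFS, namely that the relative Picard number is one, i.e.\ $\dim N^1(X/Y)_\qq = 1$. The restriction $\rho$ factors through the quotient $N^1(X)_\qq \twoheadrightarrow N^1(X/Y)_\qq$: a class in $N^1(X)_\qq$ that pairs trivially with every fibral curve pairs, in particular, trivially with every curve in $F$, and therefore restricts to zero in $N^1(F)_\qq$. This already gives $\dim \im \rho \leq 1$. Moreover, $-K_X$ is $f$-ample, so $[-K_X]|_F = [-K_F]$ is ample and in particular nonzero in $N^1(F)_\qq$, whence $\im \rho \neq 0$ and $\dim \im \rho = 1$. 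Combined with the containment above, this contradicts $\dim N^1(F)_\qq^{\Mon(F)} > 1$.

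The conceptual weight of the argument lives inside Theorem~\ref{Deligne}, so I do not anticipate any substantial obstacle: what remains is an elementary dimension count based on the relative N\'eron--Severi group of a MFS. The only piece I would spell out carefully is the factorisation of $\rho$ through $N^1(X/Y)_\qq$ together with the identification $\im \rho = N^1(F)_\qq^{\pi_1(U,t)}$; both follow once one has selected the open set $U'_f$ on which the conclusions of Theorems~\ref{TopTriv}, \ref{CorGen} and \ref{Wis_nef} apply at once. Note that the argument uses nothing about isotriviality of the fibration, which matches the remark in the introduction that the necessary criterion applies uniformly to all MFS's.
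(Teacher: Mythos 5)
Your argument is correct and follows essentially the same route as the paper: contradiction, then Theorem~\ref{Deligne} to get surjectivity of $\rho$ onto $N^1(F)_\qq^{\pi_1(U,t)}$ and the factorisation of the monodromy through $\Mon(F)$, then the relative Picard number one condition to force $\dim N^1(F)_\qq^{\pi_1(U,t)}=1$. The only (cosmetic) difference is that you bound $\dim\im\rho$ by factoring $\rho$ through $N^1(X/Y)_\qq$, whereas the paper organises the same count via the exact sequence $0\to N^1(Y)_\qq\to N^1(X)_\qq\to N^1(F_t)_\qq^{\pi_1(U,t)}\to 0$.
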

\begin{proof}
We argue by contradiction. Let 
\[f\colon X \rightarrow Y\]
be a Mori fibre space. 
By the definition of fibre-likeness, there exists an open dense subset $U=U'_f$ of $Y$ such that the map
\[\rho\colon N^1(X)_{\Q}\rightarrow N^1(F_t)_{\Q}^{\pi_1(U,t)}\]
for a given fibre $F_t$ isomorphic to $F$. 
Let us show that $N^1(F_t)_{\Q}^{\pi_1(U,t)}$ is one dimensional. We first prove that sequence
 \[0\rightarrow N^1(Y)_\qq \xrightarrow {f^*} N^1(X)_\qq\xrightarrow {\rho}N^1(F_t)_\qq^{\pi_1(U,t)}\rightarrow 0.\]
is exact. Since $F$ is connected, the map $f^*$ is injective on $N^1(Y)_\qq$. The inclusion $\im f^*\subseteq \ker\rho$ holds because the composition $F_t\to X \to Y$ factors through a point. 
The map $\rho$ is not the zero map. Now, we use the fact that we are dealing with a Mori fibre space.  The relative Picard number is one, so:
\[\dim N^1(X)_{\Q}=\dim N^1(Y)_{\Q}+1.\]
Thus, the sequence is exact and 
$$ \dim N^1(F_t)_\qq^{\pi_1(U,t)} =1 $$
 
By Theorem \ref{Deligne}, the monodromy action factors through $\Mon(F)$, so
\[N^1(F)_{\Q}^{\Mon(F)}=\Q K_F.\]
This contradicts our hypothesis.
\end{proof}
In the next section we will introduce a more handy version of this criterion.  Let us finish this section by considering the rigid case.

\begin{theorem}[Characterisation - Rigid case]\label{rigidchar}
A rigid Fano variety $F$ with $\mathbb{Q}$-factorial terminal singularities is fibre-like if and only if
\[N^1(F)_{\Q}^{\Aut(F)}=\Q K_F\]
In this case, $F$ is a fibre of an isotrivial Mori fibre space over a curve.
\end{theorem}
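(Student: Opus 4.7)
The $(\Leftarrow)$ direction is immediate from Theorem \ref{sufficientCriterion}: the construction there produces an isotrivial MFS $X\to C$ over a curve with fibre $F$ from the hypothesis $N^1(F)^{\Aut(F)}_\qq=\qq K_F$ without ever using rigidity. Thus the content is the $(\Rightarrow)$ direction, which I split into three steps: (i) rigidity upgrades fibre-likeness to isotrivial fibre-likeness; (ii) Theorem \ref{Deligne} in the isotrivial case forces the monodromy representation on $N^1(F)_\qq$ to factor through $\HMon(F)=\Aut(F)$; (iii) the exact sequence from the proof of Theorem \ref{necessaryCriterion} pins down the one-dimensional invariant subspace to be $\qq K_F$.

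\textbf{Isotriviality.} Suppose $F$ is realised as the fibre $F_{t_0}$ of a MFS $f\colon X\to Y$, with $t_0\in U:=U'_f$. Since $H^1(F,T_F)=0$, the Kuranishi space of $F$ is a reduced point, so on an analytic neighbourhood of $t_0$ in $U$ every fibre of $f$ is isomorphic to $F$; shrinking $U$ to an open subvariety $V$ containing $t_0$ on which $f$ is isotrivial, the standard surjection $\pi_1(V,t_0)\twoheadrightarrow \pi_1(U,t_0)$ recalled from \cite[0.7(B)]{FL} in Section \ref{sec_monodromy2} ensures that the monodromy representations on $V$ and on $U$ have the same image in $\GL(N^1(F),\zz)$, and in particular the same invariant subspace in $N^1(F)_\qq$.

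\textbf{Monodromy through $\Aut(F)$ and dimension count.} By the isotrivial case of Theorem \ref{Deligne}, the monodromy representation on $V$ factors through $\HMon(F)$, and hence so does the one on $U$. Since $\HMon(F)$ coincides with $\Aut(F)$ when the latter is interpreted as a subgroup of $\GL(N^1(F),\zz)$ as in the notation of the paper, we obtain
\[
\qq K_F\ \subseteq\ N^1(F)^{\Aut(F)}_\qq\ \subseteq\ N^1(F_{t_0})^{\pi_1(U,t_0)}_\qq,
\]
where the second inclusion holds because the image of the monodromy is contained in $\Aut(F)$, so any $\Aut(F)$-invariant class is also monodromy-invariant. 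On the other hand, the MFS assumption gives the short exact sequence
\[
0\to N^1(Y)_\qq\xrightarrow{f^*} N^1(X)_\qq\xrightarrow{\rho} N^1(F_{t_0})^{\pi_1(U,t_0)}_\qq\to 0
\]
established in the proof of Theorem \ref{necessaryCriterion}, and combined with $\dim N^1(X)_\qq=\dim N^1(Y)_\qq+1$ this forces $\dim N^1(F_{t_0})^{\pi_1(U,t_0)}_\qq=1$. The chain of inclusions above therefore collapses to $N^1(F)^{\Aut(F)}_\qq=\qq K_F$, and the ``in this case'' clause of the statement follows directly from Theorem \ref{sufficientCriterion}.

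\textbf{Main obstacle.} The only genuinely non-trivial step is (i): passing from the infinitesimal statement $H^1(F,T_F)=0$ to an actual analytic-local trivialisation of the flat family $X\to U$ around $t_0$. For smooth $F$ this is classical Kuranishi theory; in our $\qq$-factorial terminal setting one has to invoke deformation theory for singular varieties (along the lines of \cite{Ser}), but the conclusion is the same. Steps (ii) and (iii) are then formal applications of Theorem \ref{Deligne} and of the exact-sequence computation already carried out for Theorem \ref{necessaryCriterion}.
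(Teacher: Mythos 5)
Your proof is correct and follows essentially the same route as the paper's: the ``if'' direction is Theorem \ref{sufficientCriterion}, and the ``only if'' direction combines the isotriviality granted by rigidity with the factorisation of the monodromy through $\HMon(F)$ (Theorem \ref{Deligne}) and the one-dimensionality of the monodromy-invariant subspace from the exact sequence in Theorem \ref{necessaryCriterion}. You merely spell out the steps (Kuranishi argument for isotriviality, the inclusion chain $\qq K_F\subseteq N^1(F)^{\Aut(F)}_\qq\subseteq N^1(F)^{\pi_1(U,t_0)}_\qq$) that the paper's two-line proof leaves implicit.
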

\begin{proof}
The ``if'' part is Theorem \ref{sufficientCriterion}. 
The ``only if'' part follows from Theorem \ref{necessaryCriterion}: just remark that if $F$ is rigid the monodromy action factors through 
$\HMon(F)$ (cf. Theorem \ref{Deligne}).
\end{proof}
If $F$ is not rigid this characterisation is false. A counterexample is the del Pezzo surface of degree 3 (see Section \ref{sec_3fold}).

\end{subsection}
\begin{subsection}{Applications of the Necessary Criterion}

The group $\Mon(F)$, defined in \ref{MonGroup}, is in general difficult to describe. Roughly speaking, it can be thought of as the group of symmetries of the nef cone preserving some other features coming from the birational geometry of the underlying variety. Taking this point of view, we can rephrase this criterion in terms of the birational geometry of $F$. The idea is that, if the faces of the nef cone are different from the view point of birational geometry, then $N^1(F)_\qq^{\Mon(F)}$ must be big. Let us give an easy example. Assume that $F$ has Picard number 2. The nef cone has two faces, $\mathcal{G}_1$ and $\mathcal{G}_2$. Each face gives a contraction
\[\pi_i \colon F \rightarrow G_i.\]
\begin{corollary}
Keep notations as above. If
\[\dim G_1 \neq \dim G_2\]
then $F$ can not be fibre-like.
\end{corollary}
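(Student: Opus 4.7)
The plan is to apply Theorem \ref{necessaryCriterion}: it suffices to show that under the hypothesis $\dim G_1 \neq \dim G_2$ we have $\dim N^1(F)_{\qq}^{\Mon(F)} = 2$, equivalently that $\Mon(F)$ acts trivially on $N^1(F)_{\qq}$.

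First I would use the Picard-number-2 assumption to pin down the possible actions. Since $\rho(F)=2$ and any element of $\Mon(F)$ preserves $\Nef(F)$, which is a 2-dimensional cone with exactly two extremal rays (the ones corresponding to the faces $\mathcal{G}_1$ and $\mathcal{G}_2$), any $g \in \Mon(F)$ either fixes both extremal rays or swaps them. In the first case $g$ fixes two linearly independent classes in $N^1(F)_{\qq}$, hence acts trivially. Thus if $\Mon(F)$ is nontrivial, some $g \in \Mon(F)$ must exchange $\mathcal{G}_1$ and $\mathcal{G}_2$.

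Next I would rule out the swapping scenario using the hypothesis on dimensions. By definition of $\Mon(F)$ (Definition \ref{MonGroup}), any element of $\Mon(F)$ preserves the type of MMP step associated to each face of $\Nef(F)$ and, crucially, the deformation type of the image. Thus if $g$ swaps $\mathcal{G}_1$ and $\mathcal{G}_2$, then $\pi_1 \colon F \to G_1$ and $\pi_2 \colon F \to G_2$ must be of the same MMP type and $G_1$ must be deformation equivalent to $G_2$; in particular $\dim G_1 = \dim G_2$, contradicting the assumption. (Equivalently, one can argue directly at the level of the pairing: distinguishing a birational contraction from a fibre type contraction, or distinguishing fibre types by the dimension of the base, is an invariant encoded in the intersection form preserved by $\Mon(F)$, as discussed just before Definition \ref{MonGroup}.)

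Combining the two steps, $\Mon(F)$ acts trivially on $N^1(F)_{\qq}$, so $N^1(F)_{\qq}^{\Mon(F)} = N^1(F)_{\qq}$ is $2$-dimensional, and Theorem \ref{necessaryCriterion} immediately yields that $F$ is not fibre-like. The only potentially subtle point is ensuring that ``dimension of the image'' really is an invariant read off by $\Mon(F)$, but this is exactly the content of the discussion preceding Theorem \ref{Monodromy_And_MMP}, where the dimension of $G_i$ appears either as the deformation invariant $\dim G_i$ or, in the Mori fibration case, as the maximal integer $k$ such that $H^k \neq 0$ for $H$ in the relative interior of $\mathcal{G}_i$.
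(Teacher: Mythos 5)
Your proof is correct and follows essentially the same route as the paper: the paper's own proof is the one-line observation that $\Mon(F)$ cannot exchange $\mathcal{G}_1$ and $\mathcal{G}_2$ (since by Definition \ref{MonGroup} it preserves the deformation type, hence the dimension, of the images of the face contractions), so it acts trivially on the rank-two lattice $N^1(F)$ and Theorem \ref{necessaryCriterion} applies. You have simply spelled out the intermediate steps that the paper leaves implicit.
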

\begin{proof}
The group $\Mon(F)$ can not exchange $\mathcal{G}_1$ and $\mathcal{G}_2$, so it is trivial.
\end{proof}
Case by case, one can cook up more refined versions of this corollary. Let us give more examples.

\begin{corollary}\label{unique.blowdown.prop}
Let $F$ be a Fano variety obtained as the blowup of another Fano variety $G$ and assume there are no other facets of $\Nef(F)$ whose associated contraction is divisorial with image a variety deformation equivalent to $G$. Then, $F$ cannot be a fibre-like Fano.
\end{corollary}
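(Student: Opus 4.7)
The plan is to apply the Necessary Criterion (Theorem \ref{necessaryCriterion}) by exhibiting two linearly independent classes in $N^1(F)_\qq$ fixed by $\Mon(F)$. Let $\pi\colon F\to G$ denote the given blow-down, $\mathcal{G}\subset\Nef(F)$ the facet associated to $\pi$, and $E$ its exceptional divisor (irreducible, since $F$ is $\qq$-factorial and $\pi$ is an extremal contraction).

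First I would show that every $g\in\Mon(F)$ fixes $\mathcal{G}$. By Definition \ref{MonGroup} together with Theorem \ref{Monodromy_And_MMP}, $g$ permutes the facets of $\Nef(F)$ preserving both the type of MMP step (divisorial, small, or of fibre type) they induce and the deformation type of the resulting target. The assumption of the corollary is precisely that $\mathcal{G}$ is the \emph{unique} facet of $\Nef(F)$ whose contraction is divisorial with image deformation equivalent to $G$, so $g(\mathcal{G})=\mathcal{G}$. Since the exceptional divisor attached to each divisorial facet is also preserved by $\Mon(F)$ (fourth bullet of Definition \ref{MonGroup}; compare the exchange-of-exceptional-divisors clause of Theorem \ref{Monodromy_And_MMP}), the class $[E]$ is $\Mon(F)$-fixed. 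Together with the always invariant class $[K_F]$, this gives two candidate invariant classes.

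It remains to verify that $[K_F]$ and $[E]$ are linearly independent, and this is where I would use that $G$ itself is Fano. Since $\pi(E)\subsetneq G$ has codimension at least two, one can pick a general irreducible curve $D\subset G$ disjoint from $\pi(E)$ (e.g.\ a general one-dimensional complete intersection of very ample divisors on $G$). Its preimage $\widetilde{D}:=\pi^{-1}(D)\subset F$ lies in the locus where $\pi$ is an isomorphism, so $E\cdot\widetilde{D}=0$, while the projection formula combined with the ampleness of $-K_G$ gives $K_F\cdot\widetilde{D}=K_G\cdot D<0$. Hence $[K_F]$ and $[E]$ span a two-dimensional subspace of $N^1(F)_\qq^{\Mon(F)}$, and Theorem \ref{necessaryCriterion} yields the conclusion.

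The main obstacle I foresee is not computational but expository: one must be pedantic about the implication ``fixed facet $\Rightarrow$ fixed exceptional-divisor class'', which Definition \ref{MonGroup} and Theorem \ref{Monodromy_And_MMP} encode only somewhat implicitly. Once this is unpacked, the numerical independence of $[K_F]$ and $[E]$ is a standard projection-formula calculation, and the corollary follows.
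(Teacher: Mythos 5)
Your proof is correct and follows essentially the same route as the paper: both deduce from the uniqueness hypothesis that the facet $\pi^*\Nef(G)$ is $\Mon(F)$-invariant and then produce a second $\Mon(F)$-invariant class independent of $[K_F]$, contradicting the necessary criterion (the paper's main variant uses a fixed point on the invariant facet, which cannot span $\qq[K_F]$ since $-K_F$ is ample and hence interior to $\Nef(F)$, but it also records exactly your observation that the exceptional divisor class is fixed). Your explicit intersection computation with a curve avoiding $\pi(E)$ is a clean way to make the linear independence of $[E]$ and $[K_F]$ precise, and it is fine as written.
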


\begin{proof}[Proof of Corollary \ref{unique.blowdown.prop}]
The face of $\Nef(F)$ corresponding to the pullback of $\Nef(G)$ is invariant by 
$\Mon(F)$. In fact, the type of an extremal contraction and the deformation type of its image are 
preserved under the action of $\Mon(F)$. Hence, the uniqueness implies 
that the map must be preserved by such action. \newline
It is enough to show that on such face there is a fixed point and, consequently, a fixed one-dimensional subspace. As, in order to be a fibre-like Fano, the only subspace preserved by $\Mon(F)$ could be the span of $K_F$ and this does not lay on the pullback of $\Nef(G)$, the required contradiction 
is immediate. As we explained above, if $\Nef(G)$ is stable by $\Mon(F)$, then the class of the 
exceptional divisor is fixed as well.
\end{proof}

The above criterion can be generalised quite easily. Let us explain how, by means of some examples.

\begin{example}\label{cone.p1xp1.example}
Let $F$ be a Fano manifold that possesses a unique Mori fibre contraction to a variety $G$, with $\dim G = k > 0$.
Then the face of the nef cone of $F$ corresponding to the nef cone of $G$ is stable under $\Mon(F)$. 
In particular, the primitive generators of the extremal rays (in the lattice $N^1(F)\subset N^1_\rr(F)$) of such a face 
are going to be permuted by $\Mon(F)$. In particular their sum will be $\Mon(F)$-invariant. Hence, $F$
cannot be fibre-like. This is the case, for example, for the projectivisation $F$ of the vector bundle associated to the 
sheaf $\oo_{\pp^1 \times \pp^1} \oplus \oo_{\pp^1 \times \pp^1}(1, 1)$.
Recall that $F$ is isomorphic to the blow-up of the cone over a smooth quadric in $\pp^3$ with center the vertex.
$\rho(F)=3$ and the facets of $\Nef(F)$ are given by the Mori fibre contraction $F \to \pp^1 \times \pp^1$ 
and the two small contractions $F \to F_i, \; i=1,2$, given by contracting the two rulings of the exceptional copy of
$\pp^1 \times \pp^1$.
\end{example}

The above analysis can be formalised into the following statement.
\begin{corollary}\label{def.equiv.cor}
Let $F$ be a Fano variety and assume that the nef cone of $F$ contains a facet $\mathcal{G}$ corresponding to a 
certain variety $G$. 
Assume that for any other facet $\mathcal{H}$ of the nef cone, the corresponding variety $H$ is not 
deformation equivalent to $G$. Then, $F$ cannot be a fibre-like Fano.
\end{corollary}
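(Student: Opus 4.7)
The plan is to argue by contradiction using the necessary criterion (Theorem \ref{necessaryCriterion}): I will show that the hypothesis forces a nonzero $\Mon(F)$-invariant vector in $N^1(F)_\qq$ that is linearly independent from $K_F$, hence $\dim N^1(F)^{\Mon(F)}_\qq > 1$ and $F$ cannot be fibre-like.

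First I would show that the facet $\mathcal{G}$ itself is preserved setwise by the action of $\Mon(F)$ on $\Nef(F)$. By the very definition of $\Mon(F)$ (see Definition \ref{MonGroup}), any element of $\Mon(F)$ permutes the facets of the nef cone, preserves the type (divisorial, flipping, or fibre type) of the associated extremal contraction, and preserves the deformation type of the image variety. Since $\mathcal{G}$ corresponds to a variety $G$ that is not deformation equivalent to the image of any other facet, any element of $\Mon(F)$ must send $\mathcal{G}$ to a facet whose associated variety is deformation equivalent to $G$, and by hypothesis this is only $\mathcal{G}$ itself.

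Next I would produce a $\Mon(F)$-invariant element of $N^1(F)_\qq$ lying in $\mathcal{G}$. The facet $\mathcal{G}$ is a rational polyhedral cone (since $F$ is a Mori dream space), so it is spanned by finitely many primitive integral extremal rays $v_1, \dots, v_r$. Since $\Mon(F)$ is a finite subgroup of $\GL(N^1(F), \zz)$ preserving $\mathcal{G}$, it permutes these primitive generators. Therefore the class $v := v_1 + \cdots + v_r \in N^1(F)_\qq$ is nonzero (it lies in the relative interior of $\mathcal{G}$) and fixed by $\Mon(F)$.

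Finally I would observe that $v$ is not proportional to $K_F$: since $-K_F$ is ample it lies in the interior of $\Nef(F)$, whereas $v$ lies on the boundary facet $\mathcal{G}$. Thus the $\Mon(F)$-invariant subspace $N^1(F)^{\Mon(F)}_\qq$ contains the two linearly independent classes $[K_F]$ and $v$, so its dimension is at least two. By Theorem \ref{necessaryCriterion}, $F$ is not fibre-like. No step strikes me as a serious obstacle; the only point to check carefully is that the preservation of deformation types of images really is built into $\Mon(F)$ (which it is, by Theorem \ref{Monodromy_And_MMP} and Definition \ref{MonGroup}), so that the uniqueness hypothesis on $G$ really does force $\mathcal{G}$ to be fixed setwise rather than merely mapped to a similarly shaped facet.
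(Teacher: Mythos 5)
Your proof is correct and follows essentially the same route as the paper: the paper proves Corollary \ref{def.equiv.cor} by noting (as in Example \ref{cone.p1xp1.example} and the proof of Corollary \ref{unique.blowdown.prop}) that the uniqueness of the deformation type forces $\mathcal{G}$ to be $\Mon(F)$-stable, that the sum of the primitive generators of its extremal rays is therefore $\Mon(F)$-invariant, and that this class lies on the boundary of $\Nef(F)$ and so cannot be proportional to the (anti-ample) canonical class, contradicting the necessary criterion. Your write-up is, if anything, slightly more explicit than the paper's on why the invariant class is nonzero and independent of $K_F$.
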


So far we have dealt with the case of a facet globally fixed by $\Mon(F)$. 
What happens to facets that are translated around the nef cone?

Let $F$ be a Fano variety and $\mathcal{F}$ be a facet of $\Nef(F)$ and let $L$ 
be the sum of the primitive generators of the extremal rays spanning $\mathcal{F}$.
Let $\mathcal{F}_1, \dots, \mathcal{F}_k$ be the facets corresponding to translates of $\mathcal{F}$ under $\Mon(F)$. Again, for each of the facets $\mathcal{F}_1, \dots, \mathcal{F}_k$, let $L_1, \dots, L_k$ 
be the sum of the primitive generators of the extremal rays spanning the facet. 
The $L_i$ constitute the orbit of $L$ under the action of $\Mon(F)$. Hence, $L_1+\dots +L_k$ is $\Mon(F)$-invariant. 
In order for $F$ to be fibre-like, it has to be a negative multiple of $K_F$, in particular it has to be ample.

When $\mathcal{F}$ corresponds to a divisorial contraction, then the same reasoning applies to show that the sum $E+E_1+\dots +E_k$ of the exceptional divisors relative to the different facets must be a multiple of $-K_F$; otherwise $F$ will not be fibre-like.

\begin{example}\label{mult.facets.example}
Let $Q$ a smooth quadric $Q \subset \pp^3$. Let $p\colon R \to Q$ be the projective space bundle 
$\pp(\oo_Q \oplus \oo_Q(1, 1))$.\newline
The map $p$ has two sections $E_0, \; E_{(1, 1)}$ corresponding to the two projections of $\oo_Q \oplus \oo_Q(1, 1)$ on its factors.\newline
Let $F$ be the Fano variety obtained as the blow-up of $R$ along an 
elliptic curve $C$ contained in $E_0$. We will denote by $\pi\colon F \to Q$ the given map.\newline
The generic fibre of $\pi$ is $\pp^1$, but over 
$p(C) \subset Q$ the fibres are chains of two copies of $\pp^1$ intersecting at a point.
The variety $F$ has exactly two different divisorial contractions $\psi_i :F \to \pp_Q(\oo_Q \oplus \oo_Q(1, 1)), \; i=1,2$ 
given by contracting the two components of the fibres of $\pi$ over $p(C)$, respectively. 
Hence, for $F$ to be fibre-like, the sum of the exceptional divisors for the $\psi_i$ must be ample.
But this is not possible as such sum has intersection $0$ with the generic fibre of $\pi$.
\end{example}
\end{subsection}
\end{section}
\begin{section}{Surfaces, threefolds and other higher dimensional examples}\label{sec_3fold}
\begin{subsection}{Del Pezzo surfaces}
The classification of fibre-like surfaces was carried out in \cite[Theorem 3.5, Addendum to item 3.5.2]{Mori_surf} when the total space $X$ has dimension three.  We generalise this result to higher dimensional total spaces. Let us denote by $S_d$ the blow up of $\pp^2$ at $9-d$ general points.
\begin{theorem}\label{mori_surf}
A del Pezzo surface $S$ is fibre-like if and only if it is isomorphic to $\pp^2$, $\pp^1\times \pp^1$ or $S_d$, with $d\le6$.
\end{theorem}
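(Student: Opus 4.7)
The plan is a case-by-case verification using Theorem \ref{mainthm} and the corollaries of Section \ref{sec_crit2}. The projective plane has Picard rank one, so $N^1(\pp^2)_\qq = \qq K_{\pp^2}$ and the sufficient criterion (Theorem \ref{sufficientCriterion}) applies trivially. On $\pp^1 \times \pp^1$, the swap of the two factors acts on $N^1_\qq$ by exchanging the two ruling classes, fixing only $\qq(f_1 + f_2) = \qq K$; hence the sufficient criterion again applies. The unique smooth del Pezzo $S_6$ of degree $6$ is toric, and its automorphism group contains the dihedral group of order $12$ acting transitively on the hexagon of six $(-1)$-curves; in $K_{S_6}^\perp$ (which carries the root system $A_1 \oplus A_2$) this dihedral group realises the full Weyl group and has no invariant vector, so the sufficient criterion applies once more.

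\emph{Positive cases for $d \le 5$ via the incidence construction.} For $d \in \{1,2,3,4,5\}$ realise the general smooth del Pezzo of degree $d$ as a complete intersection in an ambient variety $Z$ of Picard rank one: a linear section of $\mathrm{Gr}(2,5) \subset \pp^9$ for $d = 5$, the base locus of a pencil of quadrics in $\pp^4$ for $d = 4$, a cubic hypersurface in $\pp^3$ for $d = 3$, a degree-$4$ hypersurface in $\pp(1,1,1,2)$ for $d = 2$, and a degree-$6$ hypersurface in $\pp(1,1,2,3)$ for $d = 1$. In each case the incidence variety $\mathcal{I} \subset Z \times |L_1| \times \cdots \times |L_k|$ (in the notation preceding Theorem \ref{GeneralConstruction}) is $\qq$-factorial with klt singularities (inherited from the ambient in the weighted cases), and the projection $\mathcal{I} \to |L_1| \times \cdots \times |L_k|$ has relative Picard rank $\rho(Z) = 1$ with $-K_{\mathcal{I}}$ relatively ample. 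Hence it is a MFS whose general fibre is the required del Pezzo; combined with the deformation invariance of fibre-likeness for surfaces (part of Theorem \ref{surf_three}), every smooth $S_d$ with $d \le 5$ is fibre-like.

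\emph{Negative direction via the nef cone.} For the Hirzebruch surface $\mathbb{F}_1$ (blow-up of $\pp^2$ at one point), the two extremal rays of $\Nef(\mathbb{F}_1)$ give contractions of different MMP type: a $\pp^1$-bundle onto $\pp^1$ and a divisorial blow-down onto $\pp^2$. They cannot be exchanged by $\Mon(\mathbb{F}_1)$, which is therefore trivial, so $\dim N^1(\mathbb{F}_1)^{\Mon(\mathbb{F}_1)}_\qq = 2$, violating Theorem \ref{necessaryCriterion}. For the blow-up $S$ of $\pp^2$ at two distinct points $p_1, p_2$, the three extremal rays of $\Nef(S)$ are spanned by the $(-1)$-curves $E_1$, $E_2$ (exceptional) and $L$ (strict transform of the line through $p_1$ and $p_2$): contracting each $E_i$ yields $\mathbb{F}_1$, while contracting $L$ yields $\pp^1 \times \pp^1$. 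Since $\mathbb{F}_1$ and $\pp^1 \times \pp^1$ are not deformation equivalent, Corollary \ref{def.equiv.cor} forces the ray spanned by $[L]$ to be $\Mon(S)$-invariant, producing a $\Mon(S)$-invariant class outside $\qq K_S$; hence $\dim N^1(S)^{\Mon(S)}_\qq \ge 2$, again contradicting Theorem \ref{necessaryCriterion}.

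\emph{Main obstacle.} The delicate step is setting up the incidence construction uniformly for $d \le 5$ and verifying that the total space $\mathcal{I}$ satisfies all the hypotheses of Definition \ref{mfs.def}, especially in the weighted projective cases $d \in \{1, 2\}$, where the orbifold structure of $Z$ must be checked to descend to klt $\qq$-factorial singularities on $\mathcal{I}$, and in the Grassmannian case $d = 5$, where the relative Picard rank condition needs a direct computation.
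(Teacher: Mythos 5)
Your overall strategy matches the paper's: sufficient criterion for the symmetric surfaces, an incidence-variety Mori fibre space for the degrees where generic automorphisms are lacking, and the necessary criterion (via the type and deformation class of the extremal contractions) for $S_7$ and $S_8$. Your negative direction is exactly the paper's argument, correctly spelled out, and your treatment of $\pp^2$, $\pp^1\times\pp^1$, $S_6$, and the incidence construction for $d=3,4$ (and the harmless extension to $d=5$ via linear sections of $\mathrm{Gr}(2,5)$) is sound --- the paper itself remarks that the incidence method covers $\pp^1\times\pp^1$ and $S_d$ with $d\le 4$.

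Two points need repair. First, you invoke ``the deformation invariance of fibre-likeness for surfaces (part of Theorem \ref{surf_three})'' to pass from the general fibre of the incidence MFS to an arbitrary smooth $S_d$. That statement is Corollary \ref{Cor_Surf}, which the paper deduces \emph{from} the case-by-case proof of Theorem \ref{mori_surf}; using it here is circular. It is also unnecessary: every smooth del Pezzo of the given degree occurs as a smooth member of the relevant linear system(s), hence as a fibre of the incidence MFS over the good open locus, so you should argue that directly as the paper does for cubics. Second, for $d=1,2$ you route through hypersurfaces in $\pp(1,1,1,2)$ and $\pp(1,1,2,3)$, where the ambient is singular; the Lefschetz computation of $\rho(\mathcal{I})$ and the $\qq$-factorial klt structure of $\mathcal{I}$ are exactly the points you flag as ``the main obstacle'' and leave unverified, so as written these two cases are not proved. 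The paper avoids this entirely: every degree-$2$ (resp.\ degree-$1$) del Pezzo carries the Geiser (resp.\ Bertini) involution, which acts as $-\mathrm{id}$ on $K^\perp$, so $N^1(S_d)_\qq^{\Aut(S_d)}=\qq K_{S_d}$ and the sufficient criterion applies to \emph{every} such surface at once. Substituting that argument closes the gap with no extra machinery.
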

\begin{proof}
To show that $S_7$ and $S_8$ are not fibre-like we can apply either Theorem \ref{necessaryCriterion} or Theorem \ref{rigidchar}. To show that $\pp^1\times \pp^1$ and $S_d$, with $d\leq 6$ and $d\neq 3$, are fibre-like we can apply Theorem \ref{sufficientCriterion} and the classical analysis of the automorphism group of del Pezzo surfaces (cf. \cite{koitabashi} and \cite{dolgachev}). Now, let $F$ be $S_3$, a smooth cubic in $\pp^3$. A generic $F$ does not have automorphisms (cf. \cite{segre}) so we can not apply the sufficient criterion in Theorem \ref{sufficientCriterion}; however, we can argue as follows. Let $X$ be the incidence variety in $\pp^3\times \pp H^0(\pp^3,\oo(3))^{\vee}$. This is a smooth ample divisor. Hence by Lefschetz hyperplane theorem it has Picard number 2. The projection
$$ X\to \pp H^0(\pp^3,\oo(3))^{\vee} $$
is a Mori fibre space which contains all cubic surfaces as smooth fibres, so $S_3$ is fibre-like. We remark that we can handle in a similar way also $\pp^1\times \pp^1$ and $S_d$ with $d\leq4$.
\end{proof}
The cubic surface is an example of a fibre-like variety where the sufficient criterion in Theorem \ref{sufficientCriterion} does not apply. A consequence of our case-by-case proof is the following.
\begin{corollary}\label{Cor_Surf}
When $F$ is a surface, the necessary criterion \ref{necessaryCriterion} is actually a characterisation of fibre-likeness. Moreover, fibre-likeness is preserved by smooth deformations.
\end{corollary}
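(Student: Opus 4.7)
The strategy is to reduce to the classification already established in Theorem \ref{mori_surf}. Since terminal surface singularities are smooth, a Fano surface with $\qq$-factorial terminal singularities is automatically a smooth del Pezzo surface, and for such surfaces Theorem \ref{mori_surf} gives a complete description of fibre-likeness. The ``only if'' direction of the claimed characterisation is Theorem \ref{necessaryCriterion} itself, so the task reduces to showing that, for each $F$ listed as \emph{not} fibre-like in Theorem \ref{mori_surf}, one has $\dim N^1(F)^{\Mon(F)}_\qq > 1$.

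The only surfaces to handle are $S_7$ and $S_8$. For $S_8$ we have $\rho=2$, and the two facets of $\Nef(S_8)$ correspond to MMP-steps of different types: the divisorial contraction $S_8 \to \pp^2$ and the Mori fibre contraction $S_8 \to \pp^1$. Since $\Mon(S_8)$ preserves the type of step of the MMP attached to each facet, both facets are individually invariant, and hence $\dim N^1(S_8)^{\Mon(S_8)}_\qq = 2$. For $S_7$ we have $\rho=3$ and exactly three $(-1)$-curves: the two exceptional divisors $E_1, E_2$ over the blown-up points and the strict transform $L$ of the line joining them. Contracting $E_1$ or $E_2$ yields $\mathbb{F}_1$, while contracting $L$ yields $\pp^1 \times \pp^1$ under the classical identification of $S_7$ with the blow-up of $\pp^1 \times \pp^1$ at a point. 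Since $\mathbb{F}_1$ and $\pp^1 \times \pp^1$ are rigid and not deformation equivalent, the facet corresponding to $L$ fits the hypothesis of Corollary \ref{unique.blowdown.prop}, whose proof provides a $\Mon(S_7)$-invariant class outside $\qq K_{S_7}$; thus $\dim N^1(S_7)^{\Mon(S_7)}_\qq \geq 2$.

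For the deformation invariance, the crucial fact is that smooth del Pezzo surfaces of fixed degree $d$ form a single smooth deformation class, with the sole exception of degree $8$ where $\pp^1 \times \pp^1$ and $S_8$ form two separate classes. The classification of Theorem \ref{mori_surf} exhibits fibre-likeness as a function of this deformation class; hence, if $F_0$ is (respectively is not) fibre-like and $\{F_t\}$ is a smooth family specialising to $F_0$, then every $F_t$ is (respectively is not) fibre-like. The main delicate point in the whole argument is the case analysis for $S_7$: one must correctly enumerate the $(-1)$-curves and recognise that its two types of possible contraction images, $\mathbb{F}_1$ and $\pp^1 \times \pp^1$, are genuinely distinct as deformation classes, so that the machinery of Theorem \ref{Monodromy_And_MMP} forces the $L$-facet to be globally fixed by $\Mon(S_7)$.
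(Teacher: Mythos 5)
Your proof is correct and follows essentially the same route as the paper: the corollary is extracted from the case-by-case proof of Theorem \ref{mori_surf}, with the only substantive point being that $S_7$ and $S_8$ already fail the necessary criterion (which you verify correctly via the facet types of $\Nef(S_8)$ and the $\Mon$-invariance of the facet of $\Nef(S_7)$ contracting to $\pp^1\times\pp^1$), plus the observation that the resulting list is a union of deformation classes. The paper leaves these verifications implicit, so your write-up simply supplies the details the authors omit.
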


By comparing our result with the classification of $K$-stable smooth del Pezzo surfaces (\cite[Theorem 1.4]{Tosatti}), we also obtain the following corollary.

\begin{corollary}\label{delpezzo_stab}
A smooth del Pezzo surface is $K$-stable if and only if it is fibre-like.
\end{corollary}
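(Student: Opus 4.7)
My plan is to deduce this corollary by cross-referencing the classification already obtained in Theorem \ref{mori_surf} with the known classification of $K$-stable smooth del Pezzo surfaces cited from \cite[Theorem 1.4]{Tosatti}. In other words, this is purely a matching-lists argument: no new geometric input is needed beyond what has been assembled in the previous subsection.

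First, I would recall that Theorem \ref{mori_surf} identifies the smooth fibre-like del Pezzo surfaces as exactly $\pp^2$, $\pp^1 \times \pp^1$, and the surfaces $S_d$ (blow-up of $\pp^2$ at $9-d$ general points) for $d \le 6$. Equivalently, the non-fibre-like smooth del Pezzo surfaces are precisely $S_7$ and $S_8$, i.e.\ the blow-ups of $\pp^2$ at one or two general points. These are exactly the del Pezzo surfaces whose automorphism group is not reductive, which is the geometric reason they fail both conditions.

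Next, I would invoke the classification of $K$-stable smooth del Pezzo surfaces, as in \cite[Theorem 1.4]{Tosatti}, building on the work of Tian, Chen--Donaldson--Sun, and others: a smooth del Pezzo surface is $K$-stable (equivalently, admits a K\"ahler--Einstein metric) if and only if it is isomorphic to $\pp^2$, $\pp^1 \times \pp^1$, or $S_d$ for some $d \le 6$. In particular, $S_7$ and $S_8$ are the unique smooth del Pezzo surfaces that are \emph{not} $K$-stable.

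The proof then consists in observing that the two lists coincide term by term. The main (and really only) obstacle is to make sure the conventions in \cite{Tosatti} on the indexing of del Pezzo surfaces and on the generality of the blown-up points agree with the ones used in Theorem \ref{mori_surf}; once this cosmetic check is done, the if-and-only-if statement is immediate. No further argument involving $\Mon(F)$, $\Aut(F)$, or the monodromy machinery is required, since all the heavy lifting is hidden inside Theorem \ref{mori_surf}.
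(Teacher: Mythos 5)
Your proposal is correct and is exactly the paper's argument: the corollary is obtained by comparing the list of fibre-like del Pezzo surfaces from Theorem \ref{mori_surf} with the classification of $K$-stable smooth del Pezzo surfaces in \cite[Theorem 1.4]{Tosatti}, and observing that both lists exclude precisely the blow-ups of $\pp^2$ at one or two points. No further comment is needed.
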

\end{subsection}
\begin{subsection}{A general procedure to construct Mori fibre spaces}

The abstract notation will be heavy, so we start with an example. Let $F$ be a smooth divisor of bidegree $(2,2)$ in $\pp^2\times \pp^2$. Let $\sigma$ be the involution of $\pp^2\times \pp^2$ and $\pp^N:=\pp H^0(\pp^2\times \pp^2,\mathcal{O}(2,2))^{\vee}$. Consider the incidence variety $I$ in $\pp^N\times \pp^2\times \pp^2$; it is a smooth divisor of degree $(1,2,2)$. We can apply Lefschetz hyperplane theorem to show that the Picard number of $I$ is $3$. We have a fibration
$$\pi \colon I\to \pp^N,$$
whose relative Picard number is $2$. The involution $\sigma$ acts on this fibration. Let $X:=I/\sigma$ and $Y=\pp^N/\sigma$. In this way, we obtain a fibration
$$f\colon X \to Y,$$
with relative Picard number $1$. The singularities are finite quotient singularities, they are klt and $\qq$-factorial by \cite[Proposition 5.15 and Corollary 5.21]{SingularityMMP}. We conclude that $F$ is a Mori fibre space. By moving $F$ by a generic element of $\PGL(3)\times \PGL(3)$, we can always assume that it is not preserved by $\sigma$; hence the action of $\sigma$ is free on a neighbourhood of $F$ in $Z$.  This means that $F$ is a smooth fibre of $f$. The previous argument shows that every smooth divisor of bidegree $(2,2)$ in $\pp^2\times \pp^2$ is fibre-like.

We now generalise this construction. Let $F$ be a smooth Fano variety and let $Z$ be a smooth projective variety in which $F$ is immersed. Let $L_1, \dots, L_k$ be effective prime divisors on $Z$ such that the associated line bundles $\mathcal{O}_Z(L_i)$ are basepoint-free. We will indicate by $|L_i|$ the linear systems of the divisors $L_I$. \newline
Suppose that $F$ is a complete intersection $Z=L_1 \cap \dots \cap L_k$. Let $I$ be the incidence variety in $Z\times |L_1| \times \cdots \times |L_k|$ defined as
\[
Z:=\{ (z, D_1, \dots, D_k) \in Z\times |L_1| \times \cdots \times |L_k| \; | \; z \in D_1 \cap \dots \cap D_k\};
\]
 the variety $I$ is smooth since the $L_i$ are basepoint-free.  
\begin{lemma}\label{sur}
The restriction morphism
$$ \rho \colon \Pic(Z\times |L_1| \times \cdots \times |L_k|) \to \Pic(I) $$
is surjective.
\end{lemma}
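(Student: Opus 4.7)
The plan is to exhibit $I$ as an iterated projective bundle over $Z$ and then to match the resulting presentation of $\Pic(I)$ with the K\"unneth decomposition of the Picard group of the ambient variety $W:=Z\times|L_1|\times\cdots\times|L_k|$. Since each $L_i$ is basepoint-free, I would first observe that the evaluation map $H^0(Z,\oo_Z(L_i))\otimes\oo_Z\twoheadrightarrow\oo_Z(L_i)$ is surjective, so its kernel $\mathcal{K}_i$ is a vector bundle on $Z$ of rank $h^0(L_i)-1$. A direct inspection shows that the partial incidence variety $I_i:=\{(z,D_i)\in Z\times|L_i|\mid z\in D_i\}$ carries the natural structure of a projective bundle over $Z$, with fibre over $z$ the projectivization of the space of sections of $L_i$ vanishing at $z$; moreover, unwinding the defining equations of $I$ yields $I\simeq I_1\times_Z\cdots\times_Z I_k$.

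Iterating the projective bundle formula along the projection $p\colon I\to Z$ then gives a decomposition
\[
\Pic(I)=p^*\Pic(Z)\oplus\bigoplus_{i=1}^k\zz\,\xi_i,
\]
where $\xi_i$ is the tautological class of the $i$-th factor. Similarly, by K\"unneth together with $\Pic(\pp^N)=\zz$, one has $\Pic(W)=\pi_Z^*\Pic(Z)\oplus\bigoplus_{i=1}^k\zz\,\pi_i^*\oo_{|L_i|}(1)$, where $\pi_Z$ and $\pi_i$ are the natural projections from $W$. The restriction $\rho$ sends $\pi_Z^*\Pic(Z)$ onto $p^*\Pic(Z)$ tautologically, so it remains to show that $\pi_i^*\oo_{|L_i|}(1)|_I$ and $\xi_i$ span the same line in $\Pic(I)$ modulo $p^*\Pic(Z)$.

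I would verify this fibrewise: the fibre of $p$ over a point $z\in Z$ equals the product of hyperplanes in $|L_1|\times\cdots\times|L_k|$ cut out by the conditions $z\in D_i$, and the pullback of $\oo_{|L_i|}(1)$ to the $i$-th such hyperplane coincides with the standard $\oo(1)$ there, which matches the restriction of the tautological class $\xi_i$. Since two classes on a projective bundle whose restrictions to every fibre agree must differ by a class pulled back from the base, this yields the required equality modulo $p^*\Pic(Z)$ and hence the surjectivity of $\rho$. The main obstacle is precisely this fibrewise comparison: the tautological class on an (iterated) projective bundle is canonical only up to twist by classes pulled back from the base, so one has to be careful in matching it with the restriction of $\oo_{|L_i|}(1)$, but once this identification is done on each fibre, the projective bundle formula takes care of the rest.
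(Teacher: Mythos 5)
Your proof is correct, but it takes a genuinely different route from the paper's. The paper never uses the projective-bundle structure of $p\colon I\to Z$: it only records that the fibres of $p$ are equidimensional (hence $p$ is flat) Fano varieties, proves exactness of $\Pic(Z)\to\Pic(I)\to\Pic(H)$ for a single fibre $H$ by combining the local constancy of $R^2p_*\qq$ (so a line bundle trivial on $H$ is numerically trivial on every fibre, hence trivial since the fibres are Fano) with the seesaw principle, and then concludes because $\Pic(Z\times|L_1|\times\cdots\times|L_k|)$ contains the pullback of $\Pic(Z)$ and surjects onto $\Pic(H)$, the latter being a product of hyperplanes in the $|L_i|$. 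Your argument instead exhibits $I$ as the fibre product over $Z$ of the projective bundles $\pp(\mathcal{K}_i)$, with $\mathcal{K}_i=\ker\bigl(H^0(Z,\oo_Z(L_i))\otimes\oo_Z\to\oo_Z(L_i)\bigr)$ locally free by basepoint-freeness, and reads off $\Pic(I)$ from the iterated projective bundle formula. The fibrewise comparison you single out is indeed the only delicate point, and your resolution is sound: with the ``sub'' convention $|L_i|=\pp(H^0(L_i))$ the tautological $\oo(1)$ of $\pp(\mathcal{K}_i)$ is literally the restriction of the hyperplane class of $|L_i|$, and in any case the coefficients of a class in $\Pic(I)/p^*\Pic(Z)$ are detected by restriction to a single fibre. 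Your approach buys a more explicit and self-contained argument (no topological input, no appeal to the Fano-ness of the fibres) at the cost of exploiting the special structure of $I$; the paper's seesaw argument is the one it reuses elsewhere (e.g.\ in the proof of the sufficient criterion) and would persist in settings where $I\to Z$ is merely a flat fibration in Fano varieties rather than a product of projective bundles.
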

\begin{proof}
Consider the projection
$$ \pi \colon I \to Z.$$
The fibres are divisors of multi-degree $(1,\dots,1)$ in $ |L_1| \times \cdots \times |L_k| $. In particular, they are equidimensional, hence $\pi$ is flat. Fix a point $t$ in $Z$ and let $H$ be the fibre over it. Below, we will show that the sequence
\begin{displaymath} \label{ex.seq.sur}
\Pic(Z) \to \Pic(I) \to \Pic(H)
\end{displaymath}

is exact. 
Since the image of $  \Pic(Z\times |L_1| \times \cdots \times |L_k|)$ contains the image of $\Pic(Z)$ and surjects to $\Pic(H)$ we obtain the statement.

To complete the proof of the exactness of the sequence in \ref{ex.seq.sur} we argue as in the last part of the proof of Theorem \ref{sufficientCriterion}. 
Let $L$ be a line bundle on $I$ whose restriction to $H$ is trivial. Since $R^2\pi_*\qq$ is locally constant on $Z$, we have that $c_1(L)$ is trivial on every fibre. The fibres are Fano, so $L$ itself is trivial on every fibre. The map $\pi$ is flat, so, by the seesaw principle, cf. \cite[Corollary 6, p. 54]{Mumford} or \cite[Proposition 12.1.4]{kollarmori}, a line bundle which is trivial on each fibre is the pull-back of a line from the base.

\end{proof}
Suppose that there is a finite subgroup $G$ of $\Aut(Z)$ which is fixed-point-free in codimension one and whose action can be lifted to $I$; fix such a lifting.  Assume that $G$ does not preserve $F$.
\begin{theorem}\label{GeneralConstruction}
Keep notation as in Lemma \ref{sur}. If
$$  \dim N^1(Z)_\qq^G=1,$$
then $F$ is fibre-like.
\end{theorem}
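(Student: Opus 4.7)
The plan is to generalise the construction illustrated in the preceding bidegree-$(2,2)$ example. Set $B := |L_1| \times \cdots \times |L_k|$. Since the projection $\pi\colon I \to B$ is $G$-equivariant (by compatibility of the given lift with the projections), the $G$-action on $I$ descends to an action on $B$. Form the quotients $X := I/G$ and $Y := B/G$ and let $f\colon X \to Y$ be the induced morphism. The aim is to show that $f$ is a Mori fibre space and that $F$ occurs as the fibre over the image of the point $b_F \in B$ corresponding to $(L_1,\ldots,L_k)$. Since $I$ and $B$ are smooth projective and $G$ is finite, $X$ and $Y$ are normal projective varieties, and $X$ has only finite quotient singularities, which are klt and $\qq$-factorial by \cite[Proposition 5.15 and Corollary 5.21]{SingularityMMP}, exactly as in the opening example.

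The first step is to check that $F$ itself, and not some nontrivial quotient of it, arises as a fibre of $f$. Any $g \in G$ stabilising $b_F$ would satisfy $gL_i = L_i$ for every $i$ and thus preserve $F = L_1 \cap \cdots \cap L_k$; since $G$ is assumed not to preserve $F$, the stabiliser of $b_F$ in $G$ is trivial, $B \to Y$ is \'etale at $b_F$, and the fibre of $f$ over the image of $b_F$ is canonically identified with $F$.

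The crux of the argument is the relative Picard computation. By Lemma \ref{sur}, the restriction $\Pic(Z \times B) \twoheadrightarrow \Pic(I)$ is surjective; since $\Pic(B)_\qq$ is pulled back from $B$, reducing modulo $\pi^\ast N^1(B)_\qq$ exhibits $N^1(I/B)_\qq$ as a $G$-equivariant quotient of $N^1(Z)_\qq$. Taking $G$-invariants is exact on $\qq$-vector spaces, and averaging an ample class on $Z$ over $G$ gives a $G$-invariant class whose restriction to the Fano fibre $F$ is ample, hence nonzero; combined with the hypothesis $\dim N^1(Z)_\qq^G=1$, this forces $\dim N^1(I/B)_\qq^G=1$. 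Because $G$ is fixed-point-free in codimension one on $Z$, after possibly shrinking $B$ to a $G$-invariant open neighbourhood of $b_F$ the induced $G$-action on $I$ is also free in codimension one; Picard groups then descend, giving $N^1(X/Y)_\qq = N^1(I/B)_\qq^G$, which is one-dimensional.

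Finally, $-K_X$ is $f$-ample: on each fibre of $\pi$ it restricts to the anticanonical class of a smooth Fano complete intersection in $Z$, which is ample, and freeness of $G$ in codimension one lets us identify $K_X$ with the descent of $K_I$; ampleness on $X$ can be tested after pullback along the finite map $I \to X$. The main delicate point will be the Picard rank computation: in particular, verifying that the lifted $G$-action is fixed-point-free in codimension one on a suitable $G$-invariant open neighbourhood of the fibre over $b_F$, so that the descent $N^1(X/Y)_\qq = N^1(I/B)_\qq^G$ truly yields $\rho(X/Y)=1$, and that the map $f$ really is an MFS according to Definition \ref{mfs.def} realising $F$ as a fibre over $U'_f$ as required by Definition \ref{fibre_like}.
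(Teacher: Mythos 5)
Your proposal follows essentially the same route as the paper's proof: form the quotients $X=I/G$ and $Y=(|L_1|\times\cdots\times|L_k|)/G$, use Lemma \ref{sur} together with $\dim N^1(Z)_\qq^G=1$ to bound the relative Picard number by taking $G$-invariants, identify $F$ as an honest fibre because $G$ does not preserve it, and deduce klt $\qq$-factorial singularities from the codimension-one freeness of the action. The extra details you supply (triviality of the stabiliser of $b_F$, $f$-ampleness of $-K_X$) are consistent elaborations of steps the paper treats more briefly, so the argument is correct and matches the paper's.
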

\begin{proof}
We construct explicitly a Mori fibre space which will have $F$ as a smooth fibre. Let $X:=I/G$ and $Y= (|L_1| \times \cdots \times |L_k|)/G$. We claim that the projection 
$$ f\colon X \to Y  $$
has relative Picard number one. 
Since the $|L_i|$ are projective spaces, we have  
$$\Pic(Z\times |L_1| \times \cdots \times |L_k|) =\Pic(Z)\times \Pic(|L_1|)\times \cdots \times \Pic(|L_k|).$$
Lemma \ref{sur} and the hypothesis $\dim N^1(Z)_\qq^G=1$ imply that
\begin{eqnarray}\nonumber
\dim N^1(X)_\qq=\dim N^1(I)_\qq^G &\leq \sim N^1(X \times |L_1| \times \cdots \times |L_k|)_\qq^G=\\ \nonumber
= \dim N^1( |L_1| \times \cdots \times |L_k|)_\qq^G+1&=\dim N^1(Y)_\qq+1.
\end{eqnarray}
The variety $F$ is a smooth fibre of $f$ because it is not fixed by $G$. Since the action of $G$ on $Z$ is fixed-point-free in codimension one, the singularities of $X$ and $Y$ are klt and $\qq$-factorial by \cite[Proposition 5.15 and Corollary 5.21]{SingularityMMP}.
\end{proof}
We remark that it should not be easy to check if the singularities are terminal, as explained in the remark after \cite[Corollary 5.21]{SingularityMMP}. Let us apply our result. Denote by $(\pp^n)^r$ the cartesian product of $r$ copies of $\pp^n$.
\begin{corollary}\label{esempi_in_dimensione_alta}
Take positive integers $r,k,d,$ and $n\geq 2$ such that $kd<n+1$. Let $F$ be a smooth complete intersection of $k$ divisors of degree $(d,\dots ,d)$ in $(\pp^n)^r$. Then $F$ is fibre-like.
\end{corollary}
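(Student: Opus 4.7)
The plan is to apply Theorem~\ref{GeneralConstruction} with the ambient variety $Z=(\pp^n)^r$ and the line bundles $L_1=\cdots=L_k=\oo(d,\dots,d)$. These are basepoint-free since $\oo(d)$ is very ample on $\pp^n$, and the adjunction formula yields
\[
K_F=\oo(kd-n-1,\dots,kd-n-1)\big|_F,
\]
so the numerical assumption $kd<n+1$ is exactly what is needed for $F$ to be Fano.

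For $r\geq 2$, I would take $G=\zz/r\zz$ acting on $(\pp^n)^r$ by cyclic permutation of the factors, and verify the four hypotheses of the theorem in turn. \emph{Fixed-point-freeness in codimension one:} any non-identity element $g^j$ has $s=\gcd(r,j)<r$ orbits of equal length on $\{1,\dots,r\}$, and its fixed locus on $(\pp^n)^r$ is the corresponding block-diagonal copy of $(\pp^n)^s$, of codimension $n(r-s)\geq n\geq 2$. \emph{Lifting of $G$ to the incidence variety $I$:} the sheaf $\oo(d,\dots,d)$ is canonically $G$-linearised, being the tensor product of the pull-backs along the $r$ projections that $G$ permutes; hence $G$ acts on each $|L_i|$, and the diagonal action on $Z\times|L_1|\times\cdots\times|L_k|$ preserves the incidence condition and lifts to $I$. \emph{Invariants in N\'eron--Severi:} $N^1(Z)_\qq\cong\qq^r$, generated by the pull-backs of hyperplane classes on the factors, which $G$ permutes cyclically; the invariant subspace is therefore the one-dimensional diagonal line.

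The remaining condition is that $G$ does not preserve $F$. Since fibre-likeness depends only on the isomorphism class of $F$, I would replace $F$ by its image $\phi(F)$ under a sufficiently general $\phi\in(\PGL_{n+1})^r$: a generic $\phi$ does not commute with the cyclic permutation, so $\phi(F)$ is not $G$-invariant, and $\phi(F)\cong F$. Theorem~\ref{GeneralConstruction} then exhibits $\phi(F)$ as the general fibre of an explicit Mori fibre space, so $F$ is fibre-like.

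For $r=1$ the above construction is vacuous, but the conclusion is automatic: either $\dim F=n-k\geq 3$, in which case the Lefschetz hyperplane theorem forces $\rho(F)=1$, so $F$ is trivially a MFS over a point; or $\dim F\leq 2$ and a short case analysis shows that $F$ is one of $\pp^2$, $\pp^1\times\pp^1$, the cubic surface $S_3$, or the degree-four del Pezzo $S_4$, each of which is fibre-like by Theorem~\ref{mori_surf}. I expect no serious obstacle in the argument; the most delicate step is the reduction to a non-$G$-invariant representative in the isomorphism class of $F$, which is settled by the genericity of $\phi$, while the four structural conditions on the cyclic action are completely routine.
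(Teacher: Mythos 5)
Your proposal is correct and follows essentially the same route as the paper: apply Theorem~\ref{GeneralConstruction} to $(\pp^n)^r$ with a permutation action on the factors (the paper uses the full symmetric group $S_r$, but explicitly notes afterwards that any subgroup acting transitively on the factors and fixed-point-freely in codimension one suffices, so your cyclic $\zz/r\zz$ is fine), after moving $F$ by a general automorphism so that it is not $G$-invariant. The only loose step is the inference ``a generic $\phi$ does not commute with the permutation, hence $\phi(F)$ is not $G$-invariant,'' which is not a valid implication as stated (one should instead argue that if every translate $\phi(F)$ were $G$-invariant, then $F$ would be invariant under the normal closure of $G$, which contains a positive-dimensional subgroup of $(\PGL_{n+1})^r$, a contradiction); however, the paper asserts the same genericity claim with no more justification, and your separate treatment of $r=1$ patches a case the paper's proof silently glosses over.
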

\begin{proof}
The condition $kd<n+1$ ensures that $F$ is Fano.
Let $G$ be the symmetric group on $r$ elements. It acts on $(\pp^n)^r$ permuting the factors. By acting by a general automorphism of $(\pp^n)^r$ we can arrange that $G$ does not fix $F$. We can now apply Theorem \ref{GeneralConstruction}. 
\end{proof}
The Mori fibre space will in general depend on a choice of the lifting of $G$ to a subgroup of $\Aut(I)$.  
For instance, if $G$ acts trivially on the linear systems, the base $Y$ will be a product of projective spaces. If the lifting is nontrivial, the base will be a singular variety with smaller Picard number. Moreover, we could have chosen a smaller $G$. It is enough that the action of $G$ is transitive on the copies of $\pp^n$ and fixed-point-free in codimension one on $(\pp^n)^r$.
\begin{corollary}\label{esempi_in_dimensione_alta_due}
Fix two integers $r$ and $n\geq 2$. Denote by $L_i$ be the line bundle $\oo(1,\dots, 0, \dots ,1)$ on $(\pp^n)^r$, where the $0$ appears only at the $i$-th position. A smooth complete intersection $F$ of multi-degree $(L_1,\dots, L_r)$ in $(\pp^n)^r$ is fibre-like.
\end{corollary}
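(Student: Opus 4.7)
The plan is to apply Theorem \ref{GeneralConstruction} with $Z=(\pp^n)^r$, the prescribed line bundles $L_1,\dots,L_r$, and $G=S_r$ acting on $Z$ by permutation of the $r$ factors. Each $L_i=\bigotimes_{j\neq i}p_j^*\oo_{\pp^n}(1)$ is a tensor product of globally generated line bundles, hence basepoint-free. An adjunction computation gives $-K_F=\oo(n-r+2,\dots,n-r+2)|_F$, which is ample precisely when $r\le n+1$; this is the (implicit) range in which $F$ is Fano and the statement is non-vacuous.

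I then verify the remaining hypotheses of Theorem \ref{GeneralConstruction}. From $p_j\circ\sigma=p_{\sigma^{-1}(j)}$ one computes $\sigma^*L_i=L_{\sigma^{-1}(i)}$, so $S_r$ permutes the $L_i$ and the obvious combined action on $Z\times|L_1|\times\cdots\times|L_r|$, acting on $Z$ while simultaneously permuting the factors $|L_i|$ via the induced isomorphisms, preserves the incidence variety $I$. The fixed locus of any non-identity $\sigma\in S_r$ sits inside a big diagonal $\{x_i=x_j\}$ associated to a transposition in its cycle decomposition; since this has codimension $n\ge 2$, the action is fixed-point-free in codimension one. Finally, $N^1(Z)_\qq=\bigoplus_{i=1}^r\qq\cdot p_i^*H$ with $S_r$ permuting the generators, so the invariant subspace $N^1(Z)_\qq^{S_r}$ is one-dimensional, spanned by $\sum_i p_i^*H$.

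The last condition I need is that $G=S_r$ does not preserve $F$. Because fibre-likeness depends only on the abstract isomorphism class of $F$, I am free to replace the embedding of $F$ by its image under a general element of $\PGL(n+1)^r\subset\Aut(Z)$; since the $S_r$-invariant smooth complete intersections of the prescribed multi-degree form a proper closed subvariety of the relevant parameter space, for a generic such translate the permutation group does not stabilise $F$. All hypotheses of Theorem \ref{GeneralConstruction} are then satisfied, and the conclusion follows. The only real subtlety is this last genericity argument; the remaining checks are direct bookkeeping on the Picard group and fixed loci of $(\pp^n)^r$.
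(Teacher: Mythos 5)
Your proof is correct and follows exactly the route the paper intends: the paper gives no separate proof for this corollary, but its proof of Corollary \ref{esempi_in_dimensione_alta} (and the remark preceding the statement, noting that any $G$ transitive on the factors and fixed-point-free in codimension one suffices) is precisely your argument — apply Theorem \ref{GeneralConstruction} with $G=S_r$ permuting the factors of $(\pp^n)^r$, which permutes the $L_i$, has fixed locus of codimension $n\ge 2$, has one-dimensional invariant Picard group, and can be arranged not to stabilise $F$ by a general translate. Your additional checks (adjunction for the Fano range $r\le n+1$, the lift of the action to the incidence variety) are accurate bookkeeping that the paper leaves implicit.
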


Clearly, there are many variants of these corollaries.In the next section we will give a few more specific examples.

\end{subsection}
\begin{subsection}{Fano Threefolds}

The results described in the previous sections show that there are quite a few restrictions on the geometry of a Fano variety
$F$ in order for it to be fibre-like. We are interested in understanding
how strong these restrictions are. As vague as this question may appear, drawing on the classification of smooth Fano
threefolds due to Mori and Mukai (cf. \cite{mori.mukai.class} and \cite{mori.mukai.class.2}),
we are able to show that in this context most threefolds do not satisfy these restrictive conditions.

We will refer to \cite[Tables 2, 3, 4, 5]{mori.mukai.class} where a full description of the deformation types of
Fano threefolds is given. 

\begin{theorem}\label{3folds.MFS.thm}
Let $F$ be a smooth Fano threefold with Picard number greater than $1$. Then $F$ is fibre-like if and only if its deformation type appears in Table \ref{3folds.MFS.table}.
\end{theorem}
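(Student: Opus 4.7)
The plan is to run a case-by-case analysis over the Mori--Mukai classification of smooth Fano threefolds with $\rho \geq 2$, applying the criteria of Theorem \ref{mainthm} together with the refinements developed in this section. First, for each of the $88$ deformation classes I extract from \cite{mori.mukai.class} and \cite{mori.mukai.class.2} the combinatorial structure of $\NE(F)$: the type (divisorial or of fibre type; no small contractions occur on smooth Fano threefolds) of each extremal contraction, the deformation type of its image, and, when divisorial, the exceptional divisor together with the invariants of the contracted locus. By \cite[Thm.~6.9]{DH}, in dimension three the nef cone is locally constant in families, so this data is a deformation invariant and one really argues at the level of classes.

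Second, for each class \emph{not} appearing in Table \ref{3folds.MFS.table}, I exhibit a $\Mon(F)$-invariant class of $N^1(F)_\qq$ that is linearly independent from $K_F$, violating the necessary criterion. The tools are the refined versions of Theorem \ref{necessaryCriterion} from this section: if a facet of $\Nef(F)$ has a contraction-type/image-deformation-class/exceptional-divisor not shared by any other facet, then $\Mon(F)$ fixes it, yielding an invariant class from the sum of the primitive generators of its extremal rays (or from the exceptional divisor, as in Corollaries \ref{unique.blowdown.prop} and \ref{def.equiv.cor}). More generally, for each $\Mon(F)$-orbit of facets, the analogous sum is invariant, and as in Examples \ref{cone.p1xp1.example} and \ref{mult.facets.example} one checks that the total of these orbit-sums is not proportional to $-K_F$; often it fails even to be ample, which rules out fibre-likeness. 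A systematic inspection of the Mori--Mukai tables shows that at least one of these obstructions is present in every class outside Table \ref{3folds.MFS.table}.

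Third, for each of the $8$ classes in Table \ref{3folds.MFS.table}, I realise a member of the deformation type as the general fibre of an explicit MFS. The methods available are: (a) apply Theorem \ref{sufficientCriterion} to a symmetric member by verifying $N^1(F)^{\Aut(F)}_\qq = \qq K_F$ (feasible whenever $F$ admits an automorphism group permuting transitively the orbits of extremal rays, e.g.\ $(\pp^1)^3$ or other product-like cases); (b) invoke the incidence-variety construction of Theorem \ref{GeneralConstruction} (cf.\ Corollaries \ref{esempi_in_dimensione_alta}, \ref{esempi_in_dimensione_alta_due}), realising $F$ as a complete intersection in a product of projective spaces modulo a permutation action; (c) for the cases where the sufficient criterion fails on the generic member (the threefold analogues of the cubic surface), mimic the argument of Theorem \ref{mori_surf} via a universal hypersurface in a suitable projective bundle, using Lefschetz-type arguments to bound the Picard number of the total space and conclude that the natural projection is a genuine MFS.

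The main obstacle is the sheer bookkeeping required to scan the full list of $88$ classes and, for each, determine the correct orbit structure of $\Mon(F)$ on the facets of $\Nef(F)$. In several high--Picard--rank cases ($\rho(F)$ can reach $10$), the nef cone combinatorics is intricate, but once tabulated from \cite{mori.mukai.class}, each instance reduces to a finite linear-algebra verification of the invariant subspace. The genuinely delicate step is the cubic-surface-type cases in (c), where neither criterion applies on the nose and one must construct the MFS by hand, verifying both the relative Picard number and the regularity of the generic fibre; here the flexibility of choosing the ambient projective bundle and of twisting by automorphisms makes the construction work, paralleling the surface case.
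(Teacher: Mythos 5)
Your overall strategy is the paper's: a case-by-case sweep of the Mori--Mukai classification, using the refined necessary criteria (Corollaries \ref{unique.blowdown.prop} and \ref{def.equiv.cor}, the orbit-sum and exceptional-divisor-sum obstructions of Remarks \ref{rho2.rmk} and \ref{inter.faces.rmk}) to exclude everything outside Table \ref{3folds.MFS.table}, and the incidence-variety construction of Theorem \ref{GeneralConstruction} for the positive cases. Your side remark that smooth Fano threefolds admit no small extremal contractions is correct (Mori) and consistent with how the paper reads off the facet data from the Mori--Mukai tables. The exclusion half of your plan is sound as stated.

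The gap is in step (c), i.e.\ the positive cases where the generic member has no automorphisms and is not a complete intersection in a product of projective spaces. For Mori--Mukai 2--(28) (the blow-up of $Q\subset\pp^4$ along a twisted quartic), the variety is a complete intersection of members of two \emph{distinct} linear systems $|f^*H|$ and $|f^*2H-E|$ on the blow-up $Z$ of $\pp^5$ along the Veronese surface; the incidence variety over $|L_1|\times|L_2|$ then has relative Picard number $\rho(Z)=2$, and no Lefschetz-type argument on a ``universal hypersurface'' will reduce this to $1$. One genuinely needs a finite group $G\subset\Aut(Z)$ with $\dim N^1(Z)_\qq^G=1$ interchanging the two systems, and the paper produces it as the Cremona involution of $\pp^5$ based on the Veronese, lifted to a biregular involution of $Z$ --- an idea your proposal does not supply and which is not a routine ``twist by automorphisms.'' Similarly, entries 2--(6b) and 3--(1) are double covers, not complete intersections; there the paper builds an equivariant universal family of branch divisors (inside $|-K_W|\times|-K_W|$, resp.\ inside a toric scroll carrying an $\mathcal{S}_3$-action) and quotients by the covering involution together with the extra symmetry. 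Without these specific ambient spaces and group actions, the MFS realising these three classes is not constructed, so as written your argument establishes only one direction of the theorem for those entries.
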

\begin{table}\label{3folds.MFS.table}
\begin{center}

\begin{tabular}{*{4}{c} | p{8.5cm}}
No & \cite{mori.mukai.class} &$\rho(F)$ & $-K_F^3$ & Deformation type of $F$\\
\hline
1a & (6a) & 2 & 12 & $F$ is a divisor of bidegree $(2, 2)$ in $\pp^2 \times \pp^2$.\\
1b & (6b) & 2 & 12 & $F$ is a $2:1$ cover of a smooth divisor $W$ of bidegree $(1, 1)$ in $\pp^2 \times \pp^2$ branched along a member of $|-K_W|$.\\
2 & (12) &2 & 20 & $F$ is the blow-up of $\pp^3$ with center a curve of degree 6 and genus 3 which is an 
intersection of cubics. Alternatively, $F$ is the intersection of three divisors of bidegree $(1, 1)$ in 
$\pp^3 \times \pp^3$.\\
3 & (28) &2 & 28 & $F$ is the blow-up of $Q \subset \pp^4$ with center a twisted quartic, a smooth rational curve
of degree 4 which spans $\pp^4$. \\
4 & (32) &2 & 48 & $F$ is a divisor of bidegree $(1, 1)$ in $\pp^2 \times \pp^2$.\\
5 & (1) &3 & 12 & $F$ is a double cover of $\pp^1 \times \pp^1 \times \pp^1$ whose branch locus is a divisor of
tridegree $(2, 2, 2)$.\\
6 &(13) &3 & 30 & $F$ is the blowup of a smooth divisor of bidegree $(1, 1)$ in $\pp^2 \times \pp^2$ with center a 
curve $C$ of bidegree $(2, 2)$  on it, such that $C \hookrightarrow W \hookrightarrow \pp^2 \times \pp^2 
\to \pp^2$ is an embedding for both both projections $\pp^2 \times \pp^2 \to \pp^2$.\\
7 & (27) &3 & 48 & $F= \pp^1 \times \pp^1 \times \pp^1$.\\
8 & (1) &4 & 24 & $F$ is a smooth divisor of multi degree $(1, 1, 1, 1)$ in $\pp^1 \times \pp^1 \times \pp^1 \times \pp^1$.\\
\end{tabular}
\caption{Deformation types of Fano varieties in Theorem \ref{3folds.MFS.thm} }

\end{center}

\end{table}
\begin{remark}
In the second column of Table \ref{3folds.MFS.table} we use the numbering adopted in
\cite{mori.mukai.class}. Exactly the same numbering will be used throughout our proof. 
We remark that entry $1a$ and $1b$ have the 
same deformation type. Alternative descriptions of these manifolds, which we will use, can be found in 
\cite{prok.Gfano.2}.
\end{remark}

\begin{rem}\label{last.column.label}
The last column in each table presented in \cite{mori.mukai.class} enumerates
all the possible ways a Fano threefold can be obtained from another Fano threefold by blowing up a curve. 
Alternatively, in the language of this section, they describe all the facets of the nef cone corresponding to a divisorial 
contraction in which the image of the exceptional divisor is a curve. We remark that the contractions are listed without multiplicity; this means that there could be more than one face giving the same contraction.
\end{rem}

\begin{proof}
For the reader's convenience, we will divide our analysis based on the Picard number of the Fano threefolds that we examine.

{\bf Fano varieties of Picard number $2$}

The nef cone of a Fano variety $F$ of Picard number 2 is a rational polyhedral cone of the form 
$\rr^+D_1 + \rr^+ D_2$, for $D_1, D_2$ two nef, semiample (integral) Cartier divisors on $F$. 
In this representation, we always assume that the classes of the $D_i$ are primitive in the N\'eron-Severi group. 
\begin{rem}\label{rho2.rmk}
As the nef cone is $\Mon(F)$-invariant, $\dim \Nef(F)=2$ and the only invariant subspace for 
the action of $\Mon(F)$ is the span of the anticanonical class, it 
follows that the sum of the primitive generators of $\Nef(F)$ must be a multiple of the canonical class, i.e. there exists $\lambda <0$ such that
\[
\lambda K_F \sim D_1+D_2.
\]
This is another useful condition: 
e.g., a Fano variety $F$ isomorphic to a smooth divisor of type $(1, 2)$ 
contained in $\mathbb{P}^2 \times \mathbb{P}^2$ cannot be fibre-like.
Let $i \colon F \hookrightarrow \mathbb{P}^2 \times \mathbb{P}^2$ be the inclusion of 
$F$ in $\mathbb{P}^2 \times \mathbb{P}^2$. We denote by $p_1, p_2$ the projections of 
$\mathbb{P}^2 \times \mathbb{P}^2$ onto the its two factors.
By Lefschetz hyperplane theorem, $\Nef(F)=i^\ast\Nef(\mathbb{P}^2 \times \mathbb{P}^2)$. 
Then $\Nef(F)= \mathbb{R}^+ i^\ast p_1^\ast(\mathcal{O}_{\mathbb{P}^2}) + \mathbb{R}^+ i^\ast p_2^\ast(\mathcal{O}_{\mathbb{P}^2})$ and the two classes are the primitive generators of the cone.
The adjunction formula implies that 
\[
K_F = (K_{\mathbb{P}^2 \times \mathbb{P}^2} + F)|_F= 
(K_{\mathbb{P}^2 \times \mathbb{P}^2}+\mathcal{O}_{\mathbb{P}^2 \times \mathbb{P}^2}(1, 2))|_F.
\]
It is immediately clear that $[K_F]$ is not contained on the line spanned by the sum of the two primitive
generators of $\Nef(F)$.
\end{rem}
Using Corollary \ref{def.equiv.cor},
we can immediately exclude the families corresponding to the following entries of 
Table 2 of \cite{mori.mukai.class}:
\[
(1-5), (7-11), (13-20), (22), (23), (25-31), (33-36).
\]

The variety corresponding to entry number $(12)$, the intersection of three divisors of bidegree $(1, 1)$ in $\pp^3 \times \pp^3$, is fibre-like because of Theorem \ref{GeneralConstruction}. Using Remark \ref{rho2.rmk}, we can also exclude entry $(24)$. 

The variety corresponding to entry $(6a)$ is a divisor of degree $(2,2)$ in $\pp^2\times \pp^2$, 
while the variety from entry $(32)$ is a divisor of type $(1, 1)$ in 
$\pp^2 \times \pp^2$. They  are fibre-like because of Corollary \ref{esempi_in_dimensione_alta}. 
Entry $(6b)$ is a $2:1$ cover $F$ of a smooth divisor $W$ of bidegree $(1, 1)$ in $\pp^2 \times \pp^2$ branched along a member of $|-K_W|$. We can construct inside $|-K_W| \times|-K_W|$ the universal family $Z$ for $F$ (cf. \cite[Chapter I.17]{BPV}). The variety $Z$ is smooth and projective and it has Picard number $3$. We remark that $W$ has an involution $\sigma$; its action can be lifted to both $|-K_W|$ and $Z$. 
Letting $Y:=|-K_W|/\sigma$ and $X:=Z/\sigma$ we obtain a Mori fibre space which contains $F$ as a general fibre (cf. Theorem \ref{GeneralConstruction}).

Entry $(28)$ can be described as a smooth complete intersection of $L_1:=f^*H$ and $L_2:=f^*2H-E$, where $f\colon Z \rightarrow \pp^5$ is the blow-up of the Veronese surface $V$, $E$ is the exceptional divisor and $H$ is an hyperplane in $\mathbb{P}^5$. We want to apply Theorem \ref{GeneralConstruction}. To this end we construct an order $2$ automorphism $C$ of $Z$ such that $C^*L_1=L_2$. The automorphism $C$ is a special Cremona transformation. The Veronese surface is the intersection of $6$ quadrics, so we have a Cremona transformation of $\pp^5$ whose indeterminacy locus is $V$. By blowing up $V$, we get a regular map from $Z$ to $\pp^5$ which contracts the secant variety of $V$, so this new map is again a blow-up. We conclude that $C$ lifts to a regular automorphism of $Z$. One checks that it acts nontrivially on the Picard group. A general reference for this kind of Cremona map is \cite{Cremona}.

{\bf Fano varieties of Picard number $3$}

Entry  $(1)$ is a double cover of $\pp^1 \times \pp^1 \times \pp^1$ whose branch locus is a divisor of
tridegree $(2, 2, 2)$. We will use the same notation as \cite[Section 54]{corti_complete}: this Fano variety can be realised as a member of the linear system $|2L+2M+2N|$ in the toric variety $Z$ with weight data
\begin{center}
\vspace{.2cm}
{
\begin{tabular}{*{7}{c} | *{1}{c} }\label{toric_data}
$x_0$ & $x_1$ & $y_0$  & $y_1$ & $z_0$ & $z_1$ & $w$ & \\
\hline
1 & 1 & 0 & 0 & 0 & 0 & 1 & $L$\\
0 & 0 & 1 & 1 & 0 & 0 & 1 & $M$\\
0 & 0 & 0 & 0 & 1 & 1 & 1 & $N$\\
\end{tabular}
}
\end{center}
Also for this variety Theorem \ref{GeneralConstruction} applies, since $Z$ carries a natural action of the symmetric group $\mathcal{S}_3$ which exchanges the divisors $L$, $M$ and $N$ and so lifts to the linear system $|2L+2M+2N|$.
This shows that entry $(1)$ is fibre-like.

Entry $(13)$ can be alternatively described as a smooth complete intersection of three divisors of multi-degree $(0,1,1)$, $(1,0,1)$ and $(1,1,0)$ in $\pp^2\times \pp^2\times\pp^2$. It is fibre-like because of Corollary \ref{esempi_in_dimensione_alta_due}.

Using Table 3 of \cite{mori.mukai.class} and
Corollary \ref{def.equiv.cor}, we can immediately exclude the families corresponding to the 
following entries of the table:
\[
(2-8), (11), (12), (14-16), (18), (20-24), (26), (28-31).
\]

\begin{rem}\label{inter.faces.rmk}
Let $F$ be a Fano variety of Picard number 3. Suppose that the nef cone contains two facets for which the images
of the corresponding contraction morphisms are deformation equivalent. Then these may be identified
by the action of $\Mon(F)$. In particular, the primitive generators of the two facets are exchanged and their sum is then
invariant. Hence it has to belong to the span of the canonical class, if $F$ is fibre-like.

When the two facets correspond to divisorial contractions the same holds true for the sum of the two exceptional 
divisors $E_i$, with $i=1,2$. In particular, $E_1+E_2$ has to be ample.
\end{rem}

Using the previous remark, the following entries can be shown not to be of fibre-like type:
\[
(3), (9), (10), (17), (19), (25).
\]

{\bf Fano varieties of Picard number $4$}

In Table 4 of \cite{mori.mukai.class} entry $(1)$, a divisor of multidegree $(1, 1, 1, 1)$ 
in $\pp^1 \times \pp^1 \times \pp^1 \times \pp^1$, is fibre-like because of Corollary \ref{esempi_in_dimensione_alta}. 
Using Corollary \ref{def.equiv.cor}, it is immediate to see that we can exclude the families corresponding to the 
following entries of the table:
\[
(3-6), (8-11), (13).
\]

Using the natural generalisation to Picard number $4$ of Remark \ref{inter.faces.rmk}, 
we can exclude the following entries, too:
\[
(2), (7), (12).
\]

{\bf Fano varieties of Picard number $5$}

In this case the only Fano threefolds are the following.
\begin{itemize}
\item Let $Y$ be the blow up of a quadric $Q \subset \pp^3$ along a smooth conic contained in it. 
The Fano variety $F$ is the blow-up of $Y$ with center three distinct exceptional lines of 
the blow-up $Y \to Q$; then the sum of the three 
exceptional divisors over the lines must be a (negative) multiple of $K_X$ and it is ample. That is 
clearly false true, as one can see by taking an exceptional line for the map $Y \to Q$ other than those already blown up.
\item $F$ is the blow-up of $Y=\pp(\oo_{\pp^1 \times \pp^1} (1, 0) \bigoplus \oo_{\pp^1 \times \pp^1}(0, 1))$ with center 
two exceptional lines $l_1, l_2$ of the blow-up $\phi\colon Y \to \pp^3$ such 
that $l_1$ and $l_2$ lie on the same irreducible component of the exceptional set of $\phi$; such $F$ is not
fibre-like by Proposition \ref{unique.blowdown.prop}.
\item Products 
\[
\mathbb{P}^1 \times S_d, \; d \leq 6,
\]
where $S_d$ is a del Pezzo of degree $d$. A quick analysis shows immediately that the projection onto the second factor
must be $\Mon(S_d)$-invariant as ${\rm Nef}(\mathbb{P}^1 \times S_d) = {\rm Nef}(\mathbb{P}^1) \times 
{\rm Nef}(S_d)$. 
\end{itemize} 
\end{proof}
As a consequence of our case-by-case proof we have the following corollary.
\begin{corollary}\label{Cor_3fold}
When $F$ is a threefold, the necessary criterion \ref{necessaryCriterion} is actually a characterization of fibre-likeness. Moreover, fibre-likeness is preserved by smooth deformations.
\end{corollary}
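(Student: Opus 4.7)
The plan is to deduce both assertions directly from the case-by-case analysis carried out in Theorem \ref{3folds.MFS.thm}, with essentially no new work required beyond a careful audit of that proof.

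For the characterization statement, one direction is immediate from Theorem \ref{necessaryCriterion}: if $F$ is fibre-like then $\dim N^1(F)_\qq^{\Mon(F)} = 1$. For the converse, I would revisit the proof of Theorem \ref{3folds.MFS.thm} and verify that each exclusion of a Mori--Mukai class from the list of fibre-like threefolds is carried out via Corollary \ref{def.equiv.cor}, Proposition \ref{unique.blowdown.prop}, Remark \ref{rho2.rmk}, or Remark \ref{inter.faces.rmk}. In each such instance, the argument explicitly produces a class in $N^1(F)_\qq$ (for example, the sum of the primitive generators of certain permuted facets of $\Nef(F)$, or the sum of the corresponding exceptional divisors, or the class of the unique invariant facet) which is $\Mon(F)$-invariant and is manifestly not a rational multiple of $K_F$. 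Consequently $\dim N^1(F)_\qq^{\Mon(F)} > 1$ for every non-fibre-like deformation class, which combined with the direct MFS constructions for the eight classes of Table \ref{3folds.MFS.table} yields the claimed equivalence.

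For the deformation invariance statement, the point is simply that Theorem \ref{3folds.MFS.thm} characterizes fibre-likeness of smooth Fano threefolds with $\rho(F) > 1$ purely in terms of deformation type, while the $\rho = 1$ case is automatic since every Picard-rank-one Fano is the fibre of the trivial MFS to a point. Since two smooth Fano threefolds lying in a connected smooth family share a common Mori--Mukai deformation type, they are either both fibre-like or both not.

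The main obstacle is really a bookkeeping task rather than a genuine difficulty: one must verify that every exclusion in the proof of Theorem \ref{3folds.MFS.thm} is actually an instance of the necessary criterion, rather than some a priori stronger obstruction. A careful reading shows that all the tools used (uniqueness of a divisorial contraction within a given deformation class, mismatches between dimensions of images of contractions, and sums of primitive generators or exceptional divisors under facet identifications imposed by $\Mon(F)$) each produce explicit $\Mon(F)$-invariant classes outside $\qq K_F$, so no additional argument is needed.
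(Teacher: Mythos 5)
Your proposal is correct and follows exactly the route the paper intends: the paper states Corollary \ref{Cor_3fold} with no separate argument beyond the remark that it is ``a consequence of our case-by-case proof'' of Theorem \ref{3folds.MFS.thm}, and your audit---checking that every exclusion there (via Corollary \ref{def.equiv.cor}, Corollary \ref{unique.blowdown.prop}, Remark \ref{rho2.rmk}, Remark \ref{inter.faces.rmk}) exhibits a $\Mon(F)$-invariant class outside $\qq K_F$, while the eight listed classes and the $\rho=1$ case are fibre-like by explicit construction---is precisely the implicit content of that remark. The deformation-invariance claim likewise matches the paper's reasoning, since the classification is stated at the level of Mori--Mukai deformation types.
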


\begin{rem}[$K$-stability]\label{rui_st}
It is known that threefolds $(4)$ and $(7)$ are $K$-stable; in \cite{rui} varieties $(1.b)$ and $5$ are proved to be $K$-stable, being appropriate finite cover of $K$-stable varieties.
\end{rem}
\end{subsection}
\end{section}

\section{Smooth toric Fano varieties and K-stability}\label{sec_toric} 

In this section we prove that any smooth fibre-like toric Fano variety has barycentre in the origin (i.e. it is $K$-stable). Let us point out that there are smooth toric varieties that are $K$-stable but not fibre-like, such as $\pp^1\times \pp^2$. 

\subsection{Preliminaries on toric geometry: primitive collections}

We start recalling some notation and basic facts. For more details, see \cite{CLS11}, \cite{Bat91} and \cite{Cas03a}.

Let $N$ be a free abelian group of rank $n$ and set $N_\qq:=N\otimes_\zz \qq$. Denote by $M$ the dual of $N$.  Let $\Sigma \subset N_\qq$ be a fan of an $n$-dimensional smooth toric Fano variety $F$ and let $\De \subset N_\qq$ be the dual polytope associated to the anti-canonical polarisation. 

The polytope $\De$ is the polytope whose vertices are the integral generators of the $1$-dimensional cones contained in $\Sigma$. We denote the set of all vertices of $\De$ by $V(\De)$.

Let $N_1(F)$ be the group of 1-cycles on $F$ modulo numerical equivalence and set $N_1(F)_\qq=N_1(F) \otimes \qq$. Inside $N_1(F)_\qq$ we consider the Kleiman-Mori cone $\NE(F)$ generated  by the effective 1-cycles. There is the following basic exact sequence:
\begin{equation}\label{toricses1}
0 \rightarrow N_1(F) \rightarrow \zz^{V(\De)}\rightarrow N \rightarrow 0
\end{equation}
and dually
\begin{equation}\label{toricses2}
0 \rightarrow M \rightarrow \zz^{V(\De)}\rightarrow N^1(F) \rightarrow 0.
\end{equation}

In this subsection, we also need some notation and results about primitive collections.

\begin{definition}
A subset $\mathcal P \subset V(\Delta)$ is called a \emph{primitive collection} if the cone generated by 
$\mathcal  P$ is not in $\Sigma$ and for each $x \in \mathcal  P$ the cone generated by 
$\mathcal  P \setminus \{x\}$ is in $\Sigma$. 
\end{definition}
For a primitive collection $\mathcal  P=\{x_1, \ldots,x_k\}$ denote by $\sigma(\mathcal  P)$ the minimal cone in $\Sigma$ such that $x_1+\ldots +x_k \in \sigma(\mathcal  P)$ . Let $y_1,\ldots,y_h$ be generators of $\sigma(\mathcal  P)$.  By smoothness of $F$, there exist positive integers $b_i$ such that
$$
x_1+\ldots +x_k=b_1y_1+\ldots +b_hy_h;
$$
let $r(P)$ be this relation. 
\begin{definition}
The linear relation $r(\mathcal  P)$ is called \emph{the primitive relation of $\mathcal  P$} and the cone $\sigma(\mathcal  P)$ is called the focus of $\mathcal  P$. 
The integer $k$ is called the {\it length of $r\mathcal  (P)$} and the {\it degree of $\mathcal  P$} is defined as $\deg \mathcal  P= k - \sum b_i$.
\end{definition}

Using the exact sequence (\ref{toricses1}) we have the following identification between $N_1(F)$ and the group generated by relations among the vertices of $\De$:
$$
N_1(F) \cong \left\lbrace (b_x)_{x \in V(\De)} \in \Hom(\zz^{m}, \zz)\   \Bigg| \ \sum_{x \in V(\De)} b_x x=0\right\rbrace .
$$

\begin{rem}
By abuse of notation we denote by $r(\mathcal  P)$ also the cycle associated, via the previous isomorphism, to the relation
$$
x_1+\ldots +x_k - (b_1y_1 + \ldots + b_hy_h)=0.
$$ 
 Note that $\deg \mathcal  P = -(K_F \cdot r(\mathcal  P))$ and so, since we are considering Fano varieties, any primitive relation has strictly positive degree.
\end{rem}

We will need the following result (cf. \cite[Proposition 3.2]{Bat91}).

\begin{proposition}[Batyrev]\label{batyrev}
Let $F$ be a smooth toric Fano variety. Then there exists a primitive collection $\mathcal  P$ such that $\sigma(\mathcal  P)=0$.
\end{proposition}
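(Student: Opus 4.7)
The plan is to produce such a primitive collection directly from the convex geometry imposed by the Fano hypothesis. The Fano condition on $F$ is equivalent to saying that $0$ lies in the interior of the polytope $\Delta$, and moreover the fan $\Sigma$ is complete since $F$ is proper. Hence for each vertex $v \in V(\Delta)$ the opposite point $-v$ lies in the relative interior of a unique cone $\tau_v \in \Sigma$, giving a positive rational relation $v + \sum_j c_j z_j = 0$ where $z_j$ are the generators of $\tau_v$ and $c_j>0$; clearing denominators yields an integral relation $\sum_w a_w w = 0$ with $a_w \in \mathbb{Z}_{\geq 0}$ not all zero.

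Among all such non-trivial integral relations, I would then choose one with minimal total weight $\sum_w a_w$, and let $\mathcal{P} := \{w \in V(\Delta) : a_w > 0\}$ be its support. Using smoothness of $F$, which makes every cone of $\Sigma$ simplicial with generators forming part of a $\mathbb{Z}$-basis of $N$, together with minimality, I would argue that in fact $a_w \in \{0,1\}$ for all $w$, giving $\sum_{w \in \mathcal{P}} w = 0$.

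Finally, I would verify that $\mathcal{P}$ is a primitive collection. It cannot lie in any cone $\tau \in \Sigma$, for the ray generators of a smooth simplicial cone are linearly independent over $\mathbb{Q}$ and hence no positive linear combination of them can vanish. Moreover, every proper subset $\mathcal{P} \setminus \{v\}$ must be contained in some cone of $\Sigma$: otherwise one could extract from such a smaller non-contained subset a new integer relation of strictly smaller total weight, contradicting minimality. Thus $\mathcal{P}$ is a primitive collection and by construction $\sigma(\mathcal{P}) = \{0\}$.

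The main obstacle is the middle step: matching the integral structure of primitive relations (coefficients all equal to $1$) with the convex-geometric data (a general positive relation). A minimal positive relation among the vertices of a generic polytope need not have coefficients in $\{0,1\}$; extracting one of this special form is possible for fans coming from smooth Fano toric varieties, and it is precisely here that the smoothness of $F$, combined with the minimality of the chosen relation, is used most essentially.
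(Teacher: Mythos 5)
The paper does not prove this statement at all --- it is quoted directly from \cite[Proposition 3.2]{Bat91} --- so there is no internal proof to compare with; your attempt has to be judged on its own. Your overall strategy (minimise the total weight $\sum_w a_w$ over all non-trivial relations $\sum_w a_w w=0$ with $a_w\in\mathbb{Z}_{\geq 0}$, such relations existing by completeness of the fan) is a sound skeleton, but the two steps that carry all the content are not actually established. First, the claim that minimality plus smoothness forces $a_w\in\{0,1\}$ is asserted and even flagged by you as ``the main obstacle'', but no argument is given; this is precisely where the Fano hypothesis (not merely smoothness) has to enter. Second, the primitivity check rests on a false implication: a subset of $V(\Delta)$ that is not contained in any cone of $\Sigma$ need \emph{not} support any non-negative linear relation. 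For instance, in the fan of the degree-$6$ del Pezzo surface (rays $\pm e_1,\pm e_2,\pm(e_1+e_2)$, with maximal cones $\langle e_1,e_1+e_2\rangle$, $\langle e_1+e_2,e_2\rangle$, etc.) the set $\{e_1,e_2\}$ is not contained in any cone, yet $ae_1+be_2=0$ forces $a=b=0$. So ``non-cone subset $\Rightarrow$ smaller relation $\Rightarrow$ contradiction'' does not work as stated.

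Both gaps are closed by the same device, which is the real engine here and which your write-up never invokes: primitive relations together with positivity of their degrees. The support $S$ of your minimal relation spans no cone, hence contains a primitive collection $\mathcal{Q}=\{x_1,\dots,x_m\}$, whose primitive relation $x_1+\cdots+x_m=\sum_j b_jy_j$ satisfies $\deg\mathcal{Q}=m-\sum_j b_j>0$ because $F$ is Fano. Subtracting this relation from $\sum_w a_ww=0$ produces another non-negative integral relation supported on $V(\Delta)$ of total weight $\sum_w a_w-\deg\mathcal{Q}<\sum_w a_w$. By minimality the new relation must be trivial, which forces $\sum_j b_j=0$, $S=\mathcal{Q}$ and all $a_w=1$; that is, the minimal relation is exactly $\sum_{x\in\mathcal{Q}}x=0$ with $\mathcal{Q}$ primitive of trivial focus. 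This single substitution argument yields the $\{0,1\}$ coefficients and the primitivity simultaneously; without it (or an equivalent input) your proof is incomplete.
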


\begin{rem}\label{rmk}
Let 
$$
a_1x_1+\cdots+a_kx_k=b_1y_1+\cdots+b_hy_h
$$
be a relation among vertices of $\De$ with all $\{a_i\}$ and $\{b_j\}$ positive integers. Assume that $\sum a_i \ge \sum b_j.$
Then, by Lemma 1.4 in \cite{Cas03a}, $\langle x_1\cdots, x_k\rangle \not\in \Sigma$.
\end{rem}

\subsection{Fibre-likeness implies K-stability}
It is known that the symmetry of the polytope $\Delta$ is related to the $K$-stability, which is known to be equivalent to the existence of a K\"ahler-Einstein metric (cf. \cite{WZ} and \cite{BB}) of the associated Fano variety. Mabuchi proved in \cite{mabuchi} the first result relating the $K$-stability with the triviality of the barycentre of $\Delta$. This result was generalised in the singular setting in \cite{Ber12}. 

\begin{theorem}[{\cite[Cor. 1.2]{Ber12}}]\label{berg}
Let $F$ be a Gorenstein toric Fano variety. Then $F$ is $K$-stable if and only if the barycentre of $\Delta$ is the origin.
\end{theorem}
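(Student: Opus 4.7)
The plan is to reduce K-stability to a polytopal inequality on $\De$ via Donaldson's theory of toric test configurations, treating the two implications separately.

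For the ``only if'' direction I would test K-stability on product configurations coming from one-parameter subgroups $\lambda_v\colon \G \to T$ of the big torus $T\subset \Aut(F)$, indexed by $v\in N_\qq$. The Donaldson--Futaki invariant of such a product configuration equals, up to a positive multiplicative constant, the classical Futaki character on the Lie algebra of $T$. For a Fano toric variety with anticanonical polytope $\De$ this character is represented by integration against $\De$: integrating the linear functional $\langle v,\cdot\rangle$ over $\De$ against Lebesgue measure identifies it with $v\mapsto \langle v,b(\De)\rangle$, where $b(\De)$ is the barycentre of $\De$. Since the inverse of a product configuration is again a product configuration, K-stability demands the DF invariant vanish on every $\lambda_v$, forcing $b(\De)=0$.

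For the converse, I would appeal to Donaldson's classification of $T$-equivariant test configurations of polarised toric varieties by rational piecewise linear convex functions $f$ on $\De$, together with his explicit formula
\[
\mathrm{DF}(f) \;=\; a \int_{\partial \De} f \, d\sigma \;-\; \int_\De f \, dx,
\]
where $a>0$ is the normalising constant determined by the anticanonical polarisation. The first step is a reduction: by the equivariance of the DF invariant under $\Aut(F)$, checking K-stability for a toric Fano reduces to checking it on $T$-equivariant test configurations. The second step is the polytopal inequality: when $b(\De)=0$, the above functional is non-negative on all convex $f$ and vanishes precisely on affine $f$, which correspond to the trivial product configurations; this follows from Jensen's inequality once $f$ is translated so that its affine part vanishes at the barycentre.

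The main obstacle is that Donaldson's polytopal calculation of DF was originally developed for smooth toric varieties, whereas Theorem~\ref{berg} concerns Gorenstein toric Fanos which may be singular. My plan is to follow Berman's pluripotential approach: on a Gorenstein toric Fano the anticanonical polarisation still corresponds to an integral polytope, and the DF invariant of a $T$-equivariant test configuration can be computed as the limit slope of the Mabuchi energy along an associated toric geodesic ray, which is piecewise linear in the coordinates on $\De$. Once this extension of Donaldson's formula to the Gorenstein setting is established, the two-step argument above applies verbatim.
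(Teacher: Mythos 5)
First, be aware that the paper does not prove Theorem~\ref{berg} at all: it is imported verbatim from \cite[Cor.~1.2]{Ber12}, and the authors explicitly note that the known proof ``is analytic and passes through the existence of K\"ahler--Einstein metrics'' (a Wang--Zhu-type real Monge--Amp\`ere argument showing that vanishing barycentre is equivalent to the existence of a singular K\"ahler--Einstein metric, combined with Berman's pluripotential-theoretic theorem that a K\"ahler--Einstein $\qq$-Fano is K-polystable). So you are proposing a genuinely different, purely algebro-geometric route. Parts of it are sound: the ``only if'' direction via product configurations $\lambda_{\pm v}$ is the standard easy implication, and your polytopal inequality is correct --- for a reflexive moment polytope one has the identity $\int_{\partial \De} f\, d\sigma = \int_{\De}\bigl(\nabla f\cdot y + nf\bigr)\,dy$, so Donaldson's functional reduces to $\int_{\De}\nabla f\cdot y\,dy$, which for convex $f$ normalised by its supporting affine function at the origin is non-negative exactly when the barycentre vanishes, with equality only for affine $f$.

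The genuine gap is your ``first step'' of the converse: the reduction of K-polystability to $T$-equivariant test configurations. This does \emph{not} follow from ``the equivariance of the DF invariant under $\Aut(F)$''. That a variety which is K-polystable with respect to $G$-equivariant test configurations ($G$ reductive) is K-polystable with respect to \emph{all} test configurations is a difficult theorem (Datar--Sz\'ekelyhidi via the continuity method; Li--Wang--Xu and Zhuang algebraically), established years after \cite{Ber12}; there is no elementary procedure that replaces an arbitrary degeneration by a toric one without increasing the Donaldson--Futaki invariant. Absent this input, your argument only proves \emph{toric} K-polystability, which is a priori strictly weaker, and this is precisely the obstruction that forces Berman's proof through the analytic route. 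Two smaller cautions: (i) no positive-dimensional toric Fano is K-stable in the strict sense, since the torus itself yields non-trivial product configurations with vanishing DF, so the statement must be read as K-polystability (your equality analysis is consistent with this); (ii) Donaldson's formula lives on the moment polytope of $-K_F$ in $M_\qq$, whereas the paper's $\De$ is its polar dual in $N_\qq$, so one must verify that the barycentre condition is imposed on the correct polytope --- the two conditions are not obviously interchangeable.
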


The proof of the previous result is analytic and passes through the existence of K\"ahler-Einstein metrics. Applying the theorem above, we can see that there are Gorenstein terminal $\Q$-factorial  toric varieties which are fibre-like, but not $K$-stable, e.g., the weighted projective space $\mathbb P(1,1,1,1,2)$.

In this context our main result is the following.

\begin{theorem}\label{cor_kstability}
For every smooth toric fibre-like Fano variety $F$, the barycentre of the $\Delta$ is in the origin and, as a consequence,  $F$ is $K$-stable.
\end{theorem}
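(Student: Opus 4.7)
The approach is to combine the necessary criterion of Theorem \ref{necessaryCriterion} with the two fundamental short exact sequences (\ref{toricses1}) and (\ref{toricses2}) describing $N_1(F)$ and $N^1(F)$ for a toric variety, interpreted through the monodromy action.

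First I would invoke Theorem \ref{necessaryCriterion}: since $F$ is fibre-like, $\dim N^1(F)_\Q^{\Mon(F)} = 1$, and by self-duality under the $\Mon(F)$-equivariant non-degenerate intersection pairing the dual invariant space has the same dimension, so $\dim N_1(F)_\Q^{\Mon(F)} = 1$ as well. The next step is to check that the $\Mon(F)$-action lifts to a permutation action on the vertex set $V(\Delta)$, hence on $\Z^{V(\Delta)}$: for a smooth toric Fano, the combinatorial fan automorphism group $\Aut(\Sigma)$ is identified with $\HMon(F) = \Aut(F)/\Aut(F)^0$ (the torus and the rest of the connected component of $\Aut(F)$ act trivially on $N^1$), and via Theorem \ref{Monodromy_And_MMP} together with the bijection between extremal rays of $\NE(F)$ and primitive collections, every element of $\Mon(F)$ is shown to come from a fan automorphism and hence to permute $V(\Delta)$.

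Passing to $\Mon(F)$-invariants in (\ref{toricses1}) and (\ref{toricses2})---an exact operation in characteristic zero by Maschke's theorem---produces
\[ 0 \to N_1(F)_\Q^{\Mon(F)} \to (\Q^{V(\Delta)})^{\Mon(F)} \to N_\Q^{\Mon(F)} \to 0, \]
\[ 0 \to M_\Q^{\Mon(F)} \to (\Q^{V(\Delta)})^{\Mon(F)} \to N^1(F)_\Q^{\Mon(F)} \to 0. \]
The distinguished vector $\mathbf{1} = (1, \dots, 1) \in (\Q^{V(\Delta)})^{\Mon(F)}$ maps to $-K_F$ in $N^1(F)_\Q$---which by the criterion generates the invariant subspace---and to the vertex sum $\sum_{x \in V(\Delta)} x$ in $N_\Q^{\Mon(F)}$. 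Hence the desired vanishing of the barycentre is equivalent to the statement that $\mathbf{1}$ already lies in the subspace $N_1(F)_\Q^{\Mon(F)}$.

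The hard part will be establishing this last claim, namely that the unique (up to scaling) $\Mon(F)$-invariant curve class is proportional to $\mathbf{1}$. Here I would invoke Batyrev's Proposition \ref{batyrev}, which furnishes a primitive collection $\mathcal{P}_0$ with focus $\sigma(\mathcal{P}_0) = 0$; the corresponding primitive relation $r(\mathcal{P}_0) = \sum_{x \in \mathcal{P}_0} e_x$ sits in $N_1(F)_\Q \cap \NE(F)$ and is of fibre type. Averaging over the finite $\Mon(F)$-orbit of $r(\mathcal{P}_0)$ yields a nonzero $\Mon(F)$-invariant element of $N_1(F)_\Q$, which by one-dimensionality must generate $N_1(F)_\Q^{\Mon(F)}$; the combinatorial identity $k = 1 + \dim N_\Q^{\Mon(F)}$ (where $k$ is the number of $\Mon(F)$-orbits on $V(\Delta)$) combined with the fact that $\mathbf{1} \mapsto -K_F \ne 0$ in $N^1(F)_\Q$ pins this generator down as a positive multiple of $\mathbf{1}$. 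Once $\sum_x x = 0$ is secured, the barycentre of $\Delta$ lies at the origin, and Berman's Theorem \ref{berg} yields the $K$-stability of $F$.
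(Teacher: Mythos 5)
Your reduction of the statement to ``$\mathbf{1}=(1,\dots,1)$ lies in $N_1(F)_\Q$'' via the invariant parts of the sequences (\ref{toricses1}) and (\ref{toricses2}) is sound and matches the paper's setup, and invoking Proposition \ref{batyrev} is the right move. But the final step is a genuine gap. Knowing that the average $\bar v=\sum_g g\cdot r(\mathcal P_0)$ is a nonzero generator of the one-dimensional space $N_1(F)_\Q^{G}$ tells you nothing about whether $\bar v$ is proportional to $\mathbf{1}$ inside $\Q^{V(\De)}$: writing $\bar v=\sum_x c_x e_x$, the coefficients $c_x$ are constant on orbits but could perfectly well differ \emph{between} orbits, or vanish on some orbit, in which case $\sum_x c_x x=0$ gives no information about $\sum_x x$. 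The asserted ``pinning down'' by the identity $t=1+\dim N_\Q^{G}$ together with $\mathbf{1}\mapsto -K_F\neq 0$ is not a deduction --- it amounts to assuming $\mathbf{1}\in N_1(F)_\Q$, which is the statement to be proved. What is actually needed, and what the paper supplies, is twofold: (i) the orbit of $\mathcal P_0$ must \emph{cover} $V(\De)$, which the paper obtains from a nontrivial convex-geometry argument (Lemmas \ref{lemma_convex_1} and \ref{lemma_convex_2} applied to $\De\cap N^{G}$, plus a count of the facets of that intersection polytope) showing that every union of $k=\dim N_\Q^{G}$ orbits is supported on a proper face of $\De$, so a primitive collection with trivial focus must meet all $k+1$ orbits; and (ii) the distinct translates $g(\mathcal P_0)$ must be pairwise \emph{disjoint}, which the paper deduces from Remark \ref{rmk} (Casagrande's lemma). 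Only then does $V(\De)=\bigsqcup_i\mathcal P_i$ with each $\sum_{x\in\mathcal P_i}x=0$ give the vanishing of the barycentre.

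A secondary problem is your use of $\Mon(F)$ together with the claim that every element of $\Mon(F)$ ``comes from a fan automorphism'' and hence permutes $V(\De)$. This is not established by Theorem \ref{Monodromy_And_MMP} and is not obvious: $\Mon(F)$ is an abstract subgroup of $\GL(N^1(F),\Z)$ and need not be realised geometrically. The paper sidesteps this entirely by using rigidity of smooth toric Fanos and Theorem \ref{rigidchar} to get $\dim N^1(F)_\Q^{\Aut(F)}=1$, and then Cox's theorem to replace $\Aut(F)$ by $\Aut(\De)$, which manifestly permutes $V(\De)$. Note that you cannot simply swap $\Mon(F)$ for $\Aut(\De)$ in your argument while keeping the necessary criterion \ref{necessaryCriterion}: since $\HMon(F)\subseteq\Mon(F)$, the condition $\dim N^1(F)_\Q^{\Mon(F)}=1$ is \emph{weaker} than $\dim N^1(F)_\Q^{\Aut(F)}=1$, so the rigid characterisation is the input you actually need.
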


Before proving the theorem, we need to recall some convex geometry.

\begin{remark}
A basic fact is the following: the intersection of a convex polytope with an affine space is again a convex polytope.
\end{remark}

\begin{lemma}\label{lemma_convex_1}
Let $P$ be an $n$-dimensional convex polytope in an affine space $W\simeq\qq^n$ and let $H$ be a $k$-dimensional affine subspace intersecting the interior of $P$. Set $P':=P\cap H$ and consider a facet $\F'$ of $P'$. Then there exists a unique face $\F$ of $P$ of dimension al least $k-1$ such that
\begin{itemize}
\item $\F'=\F\cap H$;
\item $H$ intersects $\F$ in its relative interior.
\end{itemize}
\end{lemma}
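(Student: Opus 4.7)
The plan is to invoke the standard fact from convex geometry that the faces of the polytope $P$ partition $P$ into the disjoint union of their relative interiors, so that every point of $P$ lies in the relative interior of a unique face. Since $H$ meets the interior of $P$, the slice $P' = P \cap H$ is a genuinely $k$-dimensional polytope in $H$, and the facet $\F'$ has dimension $k-1$.

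For existence, I would pick a point $x \in \mathrm{relint}(\F')$ and let $\F$ be the unique face of $P$ with $x \in \mathrm{relint}(\F)$; this $\F$ will be my candidate. It is automatically a proper face of $P$: if $x$ lay in $\mathrm{int}(P)$ then a small Euclidean ball around $x$ would be contained in $P$, its intersection with $H$ would be a neighborhood of $x$ in $P'$, and $x$ could not belong to a boundary facet of $P'$. To verify the equality $\F' = \F \cap H$, I would write $\F = P \cap H_0$ for a supporting hyperplane $H_0$ of $P$, so that $\F \cap H = P' \cap H_0$ is itself a face of $P'$ (the degenerate case $H \subseteq H_0$ simply gives $\F \cap H = P'$). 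This face of $P'$ contains $x$, and since $\F'$ is the minimal face of $P'$ containing $x$, we obtain $\F' \subseteq \F \cap H$. Strict inclusion would force $\F \cap H = P'$ because $\F'$ is a facet; but then $P' \subseteq \F \subseteq \partial P$, contradicting the fact that $H$ meets $\mathrm{int}(P)$. The dimension bound $\dim \F \geq k-1$ then follows at once from $\F \supseteq \F \cap H = \F'$.

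For uniqueness, suppose $\F_1$ and $\F_2$ both satisfy the conclusions of the lemma. A point $y \in H \cap \mathrm{relint}(\F_1)$, which exists by hypothesis, lies in $\F_1 \cap H = \F'$, hence in $\F' = \F_2 \cap H \subseteq \F_2$. Since $\F_1$ is the unique face of $P$ whose relative interior contains $y$, the inclusion $y \in \F_2$ forces $\F_1 \subseteq \F_2$, and the symmetric argument yields equality. I anticipate that the main potential obstacle is not conceptual but technical: one must carefully justify that $\F \cap H$ is honestly a face of $P'$ (rather than just an arbitrary convex subset of the slice), and that the partition-by-relative-interiors of $P$ restricts nicely under slicing by $H$. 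Once these two structural facts are in place, the argument reduces to the relative-interior tracking outlined above.
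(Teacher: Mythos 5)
Your proof is correct, but it follows a different route from the paper's. The paper works entirely with an H-representation: it writes $P$ as a system of inequalities $a_i\cdot x\le b_i$ and $H$ as a system of equations, observes that the facet $\F'$ of $P'$ is obtained by turning a subset of the inequalities into equalities (together with the equations cutting out $H$), and then defines $\F$ by keeping exactly those active inequalities while dropping the $H$-equations; existence, uniqueness and the dimension bound are read off from this description of the constraint sets. You instead argue synthetically through the face lattice: you take a point $x$ in the relative interior of $\F'$, let $\F$ be the unique face of $P$ whose relative interior contains $x$ (using the partition of $P$ into relative interiors of faces), and then pin down $\F'=\F\cap H$ via a supporting hyperplane for $\F$ together with the minimality of the face of $P'$ containing $x$ in its relative interior. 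The two constructions produce the same face --- the minimal face of $P$ meeting the relative interior of $\F'$ --- but your version has the advantage of making the uniqueness statement and the condition that $H$ meets $\F$ in its relative interior explicit (the latter is immediate from your choice of $\F$, and uniqueness follows cleanly from the disjointness of relative interiors), whereas the paper's one-line H-representation argument leaves these points implicit. The paper's version, on the other hand, is shorter and yields the bound $\dim\F\ge k-1$ directly from counting active constraints. All the steps you flagged as potential obstacles (that $\F\cap H$ is genuinely a face of $P'$, handled by restricting the supporting hyperplane; and the relative-interior partition, which you only need for $P$ itself) are adequately addressed, so no gap remains.
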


\proof
The polytope $P$ is defined by a collection of inequalities $\{x \in W \ |\ a_i \cdot x \le b_i,\ i=1, \ldots t \}$, with $a_i,b_i \in \qq^d$, $t \ge n+1$ and $H$ is defined by a collection of equations $\{x \in W \ | \ c_j \cdot x = d_j, \ j=1,\ldots, d-k\}$, with $c_j,d_j \in \qq^d$. The facet $\F'$ is then defined, up to reordering the indices $i$ by $\{x \in W \ |\ a_i x = b_i,\ i=1, \ldots l, \  a_i x \le b_i,\ i=l+1, \ldots t,  \ c_j x = d_j, \ j=1,\ldots, d-k\}$, with $l\le n-k+1$. The set $\{x \in W \ |\ a_i x = b_i,\ i=1, \ldots l, \  a_i x \le b_i,\ i=l+1, \ldots t\}$ defines the unique face $\F$ of $P$ of dimension at least $k-1$ with the required properties.
\endproof

If we consider the action of a subgroup of automorphisms of the polytope on the vertices of $P$, we obtain the following lemma.

\begin{lemma}\label{lemma_convex_2}
Let $P$ be an $n$-dimensional polytope in an affine space $W\simeq\qq^n$ and let $G$ be a finite subgroup of $\GL(W,\qq)$. Assume that $P$ is invariant for the action of $G$ on $W$ and that $W^G$ is $k$-dimensional and intersects the interior of $P$. Then the action of $G$ on the vertices $V(P)$ has at least $k+1$ orbits.
\end{lemma}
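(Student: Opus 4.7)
The plan is to prove the stronger statement that $P' := P \cap W^G$ is precisely the convex hull of the barycentres of the $G$-orbits on $V(P)$; the required bound then follows from the elementary fact that a $k$-dimensional polytope cannot be written as the convex hull of fewer than $k+1$ points.

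First I would check that $P'$ is genuinely $k$-dimensional. Since $W^G$ is a $k$-dimensional affine subspace (here viewed through the origin, as $G \subset \GL(W,\qq)$) meeting the interior of $P$, the set $W^G \cap \mathrm{int}(P)$ is a nonempty relatively open subset of $W^G$, so $P'$ has full dimension $k$ inside $W^G$.

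Next, for each $G$-orbit $\mathcal{O}_i \subset V(P)$, for $i=1,\dots,m$, I would introduce the barycentre
\[
b_i := \frac{1}{|\mathcal{O}_i|}\sum_{v\in\mathcal{O}_i} v.
\]
Each $b_i$ lies in $P$ as a convex combination of vertices and is $G$-fixed, hence belongs to $P'$. The key step is the reverse inclusion: given $x\in P'$, write it as a convex combination $x=\sum_{v\in V(P)}\lambda_v v$ and average using $x=\tfrac{1}{|G|}\sum_{g\in G}g\cdot x$. Since $\tfrac{1}{|G|}\sum_{g} g\cdot v = b_{\mathcal{O}(v)}$ (each orbit element appears $|\mathrm{Stab}(v)|$ times), regrouping by orbit yields
\[
x = \sum_i \Bigl(\sum_{v\in\mathcal{O}_i}\lambda_v\Bigr) b_i,
\]
which is a convex combination of the $b_i$. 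Hence $P' = \mathrm{conv}(b_1,\dots,b_m)$.

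Finally, because $\dim P' = k$ and any convex hull of $m$ points has dimension at most $m-1$, I conclude $m\ge k+1$. I do not expect any real obstacle: Lemma \ref{lemma_convex_1} is not needed for this route, and the only delicate point, the inclusion $P' \subseteq \mathrm{conv}(b_1,\dots,b_m)$, is handled cleanly by the $G$-averaging identity above. Possible coincidences among the $b_i$ only strengthen the conclusion, as they would force even more orbits.
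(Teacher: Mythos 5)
Your argument is correct, and it takes a genuinely different route from the paper. The paper proves the lemma by induction on $k$: it intersects $P$ with $W^G$, applies Lemma \ref{lemma_convex_1} to lift a facet $\F'$ of $P'=P\cap W^G$ to a face $\F$ of $P$ that is $G$-invariant, gets $k$ orbits supported on $V(\F)$ by the inductive hypothesis, and finds one extra orbit among the vertices of $P$ not in $\F$. You instead prove the stronger, more structural statement that $P'$ equals the convex hull of the orbit barycentres $b_1,\dots,b_m$, via the averaging identity $x=\tfrac{1}{|G|}\sum_{g}g\cdot x$ applied to a convex combination of vertices; the orbit-stabilizer count correctly gives $\tfrac{1}{|G|}\sum_g g\cdot v=b_{\mathcal{O}(v)}$, and the dimension bound $k=\dim P'\le m-1$ follows since the affine hull of $m$ points has dimension at most $m-1$ (indeed only the inclusion $P'\subseteq\mathrm{conv}(b_1,\dots,b_m)$ is needed for this, and possible coincidences among the $b_i$ do not hurt the bound). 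Your proof is shorter, avoids the induction entirely, and does not depend on Lemma \ref{lemma_convex_1}; note, however, that the paper still needs Lemma \ref{lemma_convex_1} independently in the proof of Theorem \ref{cor_kstability}, where faces of $\De$ cut out by $N^G$ are used to locate orbits on actual faces, so your route would not remove that lemma from the paper. What the paper's inductive argument buys is precisely this compatibility with the face structure, which is then exploited in the toric application; what yours buys is a cleaner and sharper description of $P\cap W^G$ itself.
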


\proof
We prove this lemma by induction on $k$. In the case $k=0$, there is nothing to prove. 
In the case $k=1$, the fixed locus $W^G$ is a line, which meets two distinct (possibly not maximal) faces $\F_1$ and $\F_2$ of $P$. We immediately obtain two invariant sets: 
$$V(\F_1)\setminus V(\F_2),\mbox{ \ \ and\ \ }V(\F_2)\setminus V(\F_1).$$ 
These give at least two orbits.

We now prove the inductive step. Since the intersection of a convex polytope with an affine space is again a convex polytope, we can consider the intersection polytope $P':=P\cap W^G$. Let $\F'$ be one of its $(k-1)$-dimensional facets. Using Lemma \ref{lemma_convex_1}, we can find a (unique) face $\F$ of $P$, cut in its interior by $W^G$ in a $k-1$ affine space such that $\F'= \F \cap W^G$. Let $H$ be the smallest affine subspace containing $\F$; it is preserved by the action of $G$. By induction, we obtain at least $k$ orbits of vertices contained in $\F$. The extra orbit is obtained by the vertices of $P$ not contained in $\F$.
\endproof

\proof[Proof of Theorem \ref{cor_kstability}]
Any smooth Fano toric variety is rigid (cf. \cite[Corollary 4.6]{dFH12}), so we can apply Theorem \ref{rigidchar}: $F$ is fibre-like if and only if $\dim N^1(F)_{\qq}^{\Aut(F)}=1$. 

After tensoring by $\qq$ the exact sequence (\ref{toricses2}), we obtain
 \begin{equation}\label{toricses_q}
 0\rightarrow M_\qq \rightarrow \mathbb{Q}^{V(\De)} \rightarrow N^1(F)_\qq \rightarrow 0,
 \end{equation}
where $M_\qq$ is the $n$-dimensional $\mathbb{Q}$-vector space containing the dual polytope of $\Delta$. There is a natural action of $\Aut(\Delta)$ on $M_\qq$ and $\qq^{V(\Delta)}$, and a natural homomorphism $\Aut(\Delta) \to \Aut(F)$, which make the sequence above equivariant for $\Aut(\Delta)$. Moreover by \cite[Corollary 4.7]{Cox95}) we have $N^1(F)_\qq^{\Aut(\Delta)}=N^1(F)_\qq^{\Aut(\F)}$. Let us denote by $t$ be the number of orbits of the action of $\Aut(\Delta)$ on $V(\De)$.

It is easy to see that if we take the $\Aut(\Delta)$-invariant part of the exact sequence in \ref{toricses_q}, we obtain again an exact sequence. Moreover, in this case it follows immediately that
\[
(\qq^{V(\Delta)})^{\Aut(\Delta)}=\qq^{V(\De)/\Aut(\De)}=\qq^t.
\]

Let now $G$ be $\Aut(\De)$ and $t$ be the number of orbits of the action of $G$ on $V(\De)$. Set $k:=\dim M^G$; the observation from the last paragraph and the sequence (\ref{toricses_q}) imply that $F$ is fibre-like if and only if $t-k=1$. Therefore, we want to prove that if $G$ has exactly $k+1$ orbits on $V(\De)$, then the barycentre of $\De$ is the origin.

Since we are working with $\mathbb{Q}$-vector spaces, $M$ and $N$ are isomorphic as $G$-modules; in particular $\dim N^G=\dim M^G=k$.

Let $\De'$ be the intersection polytope $N^G\cap \De$. For every facet $\F'$ of $\Delta'$, one can apply Lemma \ref{lemma_convex_1} to find a unique face $\F$ of $\De$ cut by $N^G$ in its interior such that $\F'=\F \cap N^G$. Lemma \ref{lemma_convex_2} says that $V(\F)$ splits in at least $k$ orbits.
Since $F$ is fibre-like, $V(\F)$ splits in exactly $k$ orbits: another orbit is given by the set of vertices $V(\De)\backslash V(\F)$.

Let now $\F'_1$ and $\F'_2$ be two distinct facets of $\De'$, which correspond to two faces $\F_1$ and $\F_2$ of $\De$ and determine two sets $S_1$ and $S_2$ of $k$ orbits in $V(\De)$. We claim that $S_1 \neq S_2$: otherwise 
$$V(\F_1) = \bigcup S_1 = \bigcup S_2 = V(\F_2),$$ 
which would imply $\F_1 = \F_2$.

Since there are exactly $k+1$ ways to choose $k$ elements in a set of cardinality $k+1$ and $\De'$ has at least $k+1$ facets (actually exactly $k+1$ by the above argument), we conclude that any collection of $k$ orbits is supported on a face $\F$ of $\De$.

Let $\mathcal{P}$ be a primitive collection with trivial focus, whose existence is guaranteed by Proposition \ref{batyrev}. Since any set of $k$ orbits must be contained in a face, $\mathcal{P}$ must involve at least one vertex from every orbit. Acting with $G$ on $\mathcal{P}$ we obtain a  family of primitive collections $\{ \mathcal{P}_i \}_{1\le i \le r }$ such that $\sigma(\mathcal{P}_i)=0$ and $\cup \mathcal{P}_i=V(\De)$. Assume that $\mathcal{P}_i \cap \mathcal{P}_j \ne \emptyset$ for some $i,j$, i.e., $\mathcal{P}_i=\{x_1, \ldots, x_k\}$ and $\mathcal{P}_j=\{x_1, \ldots, x_h, y_{h+1}, \ldots ,y_k\}$ with $y_s \ne x_t$ for any $s,t$. Then 
$$
x_{h+1}+\ldots +{x_k}=y_{h+1}+\ldots y_k,
$$ 
which is impossible by Remark \ref{rmk}, because $x_{h+1}, \ldots, x_k$ generate a cone in $\Sigma$.

This implies that all the $\mathcal{P}_i$ are disjoint and, as a consequence, that the sum of all vertices of $\De$ equals the origin, i.e. the barycentre of $\De$ is trivial. The theorem is proved.
\endproof

The previous corollary seems to be the relative version in the toric case of the following very general conjecture by Odaka and Okada.

\begin{con}[{\cite[Conj. 5.1]{OO13}}]
Any smooth Fano manifold $X$ of Picard rank 1 is  $K$-semistable.
\end{con}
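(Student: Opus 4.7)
The plan is to attack the conjecture via the valuative criterion for K-semistability of Fujita and Li: a smooth Fano manifold $X$ is K-semistable if and only if its stability threshold
\[
\delta(X) := \inf_{E}\frac{A_X(E)}{S_X(E)}
\]
satisfies $\delta(X)\geq 1$, where the infimum runs over all prime divisors $E$ over $X$, $A_X(E)$ denotes the log discrepancy, and $S_X(E)=\frac{1}{(-K_X)^n}\int_0^{\infty}\mathrm{vol}(-K_X-tE)\,dt$. The first move is to exploit $\rho(X)=1$: writing $-K_X\equiv rH$ for the ample generator $H\in\mathrm{Pic}(X)$ and Fano index $r$, the volume function collapses to the one-variable function $\mathrm{vol}(H-sE)$, so $S_X(E)$ becomes a Seshadri-type invariant of $(X,H)$ along $E$, and the conjecture reduces to a uniform bound $A_X(E)\geq r\cdot S_{X,H}(E)$ over all divisorial valuations.

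The natural second step is to invoke Birkar's proof of the BAB conjecture: smooth Fano manifolds of fixed dimension form a bounded family, stratified by finitely many Fano indices and anticanonical degrees. Combined with lower semi-continuity of $\delta$ in flat families (Blum--Liu, Xu), the problem on each stratum reduces to verifying $\delta\geq 1$ on a Zariski-dense open subset of the moduli. One would then hope to cover this subset by existing K-stability results: Tian's $\alpha$-invariant bound $\alpha(X)\geq n/(n+1)$ applies to a wide range of hypersurfaces and complete intersections, and the Chen--Donaldson--Sun theorem supplies K-semistability for varieties admitting a weak K\"ahler--Einstein metric, which for $\rho(X)=1$ should be generic in each stratum.

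The principal obstacle is precisely the uniform valuative estimate itself: although the hypothesis $\rho(X)=1$ rules out the simplest destabilisations of product type coming from non-trivial line bundle degenerations, it does not prevent exotic divisorial valuations on birational modifications from pushing $\delta$ below $1$, and the semi-continuity argument alone only propagates existing bounds, not their existence. Producing an a priori lower bound $\delta\geq 1$ valid for \emph{every} smooth Picard rank $1$ Fano in arbitrary dimension requires controlling such valuations uniformly across Birkar's bounded family, which is essentially a reformulation of the conjecture itself. To the best of my knowledge, this is why the statement remains open beyond dimension three and beyond the classical families (hypersurfaces, complete intersections, rational homogeneous spaces) where direct computations or symmetry arguments succeed.
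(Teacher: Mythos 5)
You should first note that this statement is not a theorem of the paper at all: it is Conjecture 5.1 of Odaka--Okada, quoted verbatim and left unproved. The authors state it only as motivation for their Question of whether every fibre-like smooth Fano is $K$-semistable; what they actually prove (Theorem \ref{cor_kstability} and Corollary \ref{delpezzo_stab}) are special cases in the toric and del Pezzo settings, via Berman's barycentre criterion and the classification of $K$-stable del Pezzo surfaces --- arguments unrelated to the valuative machinery you outline. So there is no proof in the paper to compare yours against, and your final conclusion --- that the strategy collapses into a restatement of the conjecture, which remains open --- is the correct one; you were right not to claim a proof.

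That said, one intermediate step of your sketch is wrong as stated, not merely incomplete. Lower semicontinuity of $\delta$ in families (Blum--Liu) makes the locus $\{\delta>1\}$ (and, by the openness results of Blum--Liu--Xu, the $K$-semistable locus) \emph{open} in the base; it therefore propagates a bound from a special fibre to nearby general fibres, not the other way round. Knowing $\delta\geq 1$ on a Zariski-dense open subset of each stratum of the bounded family gives no control at the remaining points, since $\{\delta\geq 1\}$ has no reason to be closed --- indeed the whole subtlety of K-moduli is that K-semistability can fail under specialisation. So your ``reduction to a dense open subset'' is not a valid reduction even granting everything else. Two smaller inaccuracies: boundedness of \emph{smooth} Fano manifolds in each fixed dimension is due to Koll\'ar--Miyaoka--Mori and does not require Birkar's theorem; and Tian-type bounds $\alpha(X)\geq n/(n+1)$ cannot cover a dense open subset of every stratum, since already $\alpha(\mathbb{P}^n)=1/(n+1)$ while $\mathbb{P}^n$ is K-semistable.
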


\subsection{MAGMA computations}

Theorem \ref{cor_kstability} and its proof show that the fibre-like condition is rather restrictive.

Table \ref{t1} collects the smooth Fano toric varieties (up to dimension 8) which are fibre-like. It has been obtained using the software MAGMA together with the Graded Ring Database \cite{GRDB} (for further details on the classification, cf. \cite{Obro}).

\begin{rem}\label{super_skansen}
In Table \ref{t1}, the IDs of the Fano polytopes are the ones introduced in \cite{GRDB}.
The varieties $V_d$ are known as Del Pezzo varieties (see \cite{VK85} for more details). There is no classical description for the varieties $W_1,W_2$ and $W_3$.
\end{rem}

\begin{table}[!]\label{t1}
\begin{center}
\begin{tabular}{*{4}{c}}
DImension &  \# Vertices & Description  & ID\\
\hline
 $2$  & $6$ & $V_2$  & $2$  \\
 2 &  4 & $\pp^1 \times \pp^1$  & 4  \\
  2 &  3 & $\pp^2$  & 5 \\
\hline
 3 & 6 & $(\pp^1)^3$  & 21   \\
 3 & 4 & $\pp^3$ &  23     \\
\hline
4 &  10 & $V_4$  & 63   \\
4 & 12 & $V_2 \times V_2$  & 100 \\
4 & 8 & $(\pp^1)^4$  & 142  \\
4 & 6 & $\pp^2 \times \pp^2$  & 146    \\
4 & 5 & $\pp^4$  & 147    \\
\hline
5 & 10 & $(\pp^1)^5$  & 1003    \\
5 & 6 & $\pp^5$  & 1013    \\
\hline
6 &  14 & $V_6$  & 1930   \\
6 & 12 & $W_1$  & 5817 \\
6 & 18 & $(V_2)^3$  & 7568  \\
6 & 12 & $(\pp^1)^6$  & 8611  \\
6 & 9 & $(\pp^2)^3$  & 8631    \\
6 & 8 & $(\pp^3)^2$  & 8634    \\
6 & 7 & $\pp^6$  & 8635    \\
\hline
7 & 14 & $(\pp^1)^7$  & 80835    \\
7 & 8 & $\pp^7$  & 80891   \\
\hline
8 & 18 & $V_8$  & 106303   \\
8 & 15 & $W_2$  & 277415  \\
8 & 20 & $(V_4)^2$  & 442179  \\
8 & 24 & $(V_2)^4$  & 790981  \\
8 & 12 & $W_3$  & 830429    \\
8 & 16 & $(\pp^1)^8$	 & 830635    \\
8 & 12 & $(\pp^2)^4$  & 830767    \\
8 & 10 & $(\pp^4)^2$  & 830782    \\
8 & 9 & $\pp^8$  & 830783    \\
\end{tabular}
\caption{Smooth toric Fano varieties of dimension at most $8$ that are fibre-like.}
\end{center}
\end{table}
We would like to finish off by stating the following speculation.

\begin{con}
Let $\De$ be a smooth fibre-like polytope of dimension $d$. Assume that $d$ is an odd prime number.  Then either $X(\De)=\pp^d$ or $X(\De)=(\pp^1)^d$.   
\end{con}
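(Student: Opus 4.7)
The plan is to extract the full combinatorial structure of $\De$ from the proof of Theorem~\ref{cor_kstability} and then exploit the arithmetic restriction that $d$ is prime. First, I would re-run that argument to record the following consequence of fibre-likeness: the vertex set decomposes as a disjoint union
\[
V(\De)=\bigsqcup_{i=1}^{r}\mathcal{P}_{i},
\]
where the $\mathcal{P}_{i}$ form a single $G$-orbit (under $G:=\Aut(\De)$) of primitive collections with trivial focus; in particular all of them have the same cardinality $p$, and $|V(\De)|=rp$. Smoothness forces any $p-1$ vectors in a single $\mathcal{P}_{i}$ to be part of a $\zz$-basis of $N$, so $V_{i}:=\mathrm{span}_{\qq}\mathcal{P}_{i}$ has dimension $p-1\le d$, and since $\sum_{i}V_{i}=N_{\qq}$ we immediately get the bound $r(p-1)\ge d$.

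The heart of the argument is to upgrade this to a direct sum decomposition $N_{\qq}=\bigoplus_{i}V_{i}$ (equivalently $r(p-1)=d$) and to deduce that $\Sigma$ is the product of the subfans $\Sigma_{i}\subset V_{i}$. Since each $\mathcal{P}_{i}$ spans the unique complete smooth fan in $V_{i}$, namely that of $\pp^{p-1}$, this would give $X(\De)\cong(\pp^{p-1})^{r}$. At this point the hypothesis that $d=r(p-1)$ is an odd prime combined with $p-1\in\{1,\dots,d\}$ forces either $(r,p-1)=(1,d)$, giving $X(\De)\cong\pp^{d}$, or $(r,p-1)=(d,1)$, giving $X(\De)\cong(\pp^{1})^{d}$.

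The main obstacle is the reduction to the product structure, i.e.\ showing $r(p-1)=d$. Disjointness of the $\mathcal{P}_{i}$ and triviality of their foci are not in themselves enough; one must rule out non-trivial overlap among the subspaces $V_{i}$, or equivalently the existence of extra primitive collections of non-trivial focus. I would attempt this by leveraging the $G$-equivariance more thoroughly: $G$ acts transitively on $\{V_{1},\dots,V_{r}\}$ and fixes a subspace of dimension $t-1$, where $t$ is the number of $G$-orbits on $V(\De)$, so $N_{\qq}$ is a quotient of the induced representation $\mathrm{Ind}_{\mathrm{Stab}_{G}(V_{1})}^{G}V_{1}$. The strategy would be to show that any non-zero intersection $V_{i}\cap(V_{j_{1}}+\cdots+V_{j_{s}})$ produces, via the $G$-orbit, a further primitive collection with non-trivial focus, contradicting either the partition property of the $\mathcal{P}_{i}$ or the minimality $t-k=1$ imposed by fibre-likeness. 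Alternatively, one could try to prove directly that every primitive collection of a fibre-like smooth toric Fano has trivial focus, and then invoke the classical characterisation of smooth toric Fanos with this property as being products of projective spaces; the primality of $d$ then cuts down the list of possible products $\prod\pp^{a_{i}}$ with $\sum a_{i}=d$ and all $a_{i}$ equal (forced by $G$-transitivity on factors) to exactly the two claimed.
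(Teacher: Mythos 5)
First, a point of order: the paper offers no proof of this statement. It is stated explicitly as a conjecture (``We would like to finish off by stating the following speculation''), motivated only by the MAGMA computations summarised in Table \ref{t1}. So there is nothing to compare your argument against; the question is whether your proposal closes an open problem, and it does not.

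The decisive objection is that your argument never uses the hypothesis that $d$ is \emph{odd} -- only that it is prime -- and the statement is false for $d=2$. The degree-six del Pezzo surface $V_2$ (the toric surface with rays $\pm e_1,\pm e_2,\pm(e_1+e_2)$) is smooth, toric, Fano and fibre-like (Theorem \ref{mori_surf}; it is the first entry of Table \ref{t1}), yet it is neither $\pp^2$ nor $\pp^1\times\pp^1$. Any proof scheme that does not invoke oddness must therefore break somewhere, and $V_2$ shows exactly where yours does. Your starting data are correct: the proof of Theorem \ref{cor_kstability} does produce a partition $V(\De)=\bigsqcup_{i=1}^r\mathcal{P}_i$ into a single $\Aut(\De)$-orbit of primitive collections with trivial focus, each of size $p$, and the bound $r(p-1)\ge d$ holds. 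But the ``upgrade'' to $r(p-1)=d$ and to a product fan is not merely the hard step -- it is false. For $V_2$ the partition consists of the three antipodal pairs, so $r=3$, $p=2$, $r(p-1)=3>2=d$, the subspaces $V_i$ overlap, and the variety is not a product of projective spaces. The same failure occurs for every even-dimensional non-product entry of Table \ref{t1} ($V_4$, $V_6$, $V_8$, $W_1$, $W_2$, $W_3$, $(V_2)^3$, \dots), all of which carry the partition you describe.

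Your fallback strategy fails for the same reason: it is not true that every primitive collection of a fibre-like smooth toric Fano has trivial focus. The partition produced in the proof of Theorem \ref{cor_kstability} accounts only for the $G$-orbit of one Batyrev collection (Proposition \ref{batyrev}); nothing prevents further primitive collections with non-trivial focus, and $V_2$ has six of them (e.g.\ $\{e_1,e_2\}$ with focus $e_1+e_2$). It is precisely these extra collections that obstruct the product decomposition, and ruling them out is where the oddness of $d$ would have to enter. Your endgame -- once $X(\De)\cong(\pp^{p-1})^r$ with $d=r(p-1)$ prime, conclude $(r,p-1)\in\{(1,d),(d,1)\}$ -- is fine, and the transitivity of $G$ on the factors is a reasonable way to force equal exponents; but the structural reduction preceding it is exactly the open content of the conjecture, and as proposed it cannot be repaired without a genuinely new idea that distinguishes odd from even dimension.
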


\begin{section}{Rational homogeneous spaces}\label{sec_bandiere}
In this section we classify fibre-like rational homogeneous spaces; the upshot is that most of them are not fibre-like. Let us fix the notation.
\begin{definition}
A homogeneous space is a projective variety $F$ endowed with a transitive action of an algebraic group $G$.
\end{definition}
We assume that $G$ is semi-simple. In this set up, $F$ is a rational Fano varieties. Because of this remark, we refer to these varieties as {\it rational homogeneous space}. Alternatively, $F$ can be defined as a quotient of $G$ by a parabolic subgroup $P$. General references are \cite{Brion} and \cite{Demazure}. 

The isomorphism class of $F$ is determined by the conjugacy class of $P$; conjugacy classes of parabolic subgroups are in bijective correspondence with subsets of the nodes of the Dynkin diagram of $G$. We picture them by marking the corresponding nodes of the diagram; the resulting decorated diagram is called the Dynkin diagram of $P$. We denote by $E_P$ the group of symmetries of the Dynkin diagram preserving the marked nodes; it is a finite group and it is isomorphic to $\HMon(F)$ (see the proof of Corollary \ref{cor_hom}).  We are going to use the following classical result (cf. \cite[Theorems 1 and 2]{Demazure}).

\begin{theorem}[Demazure]
Let $F=G/P$ be a rational homogeneous space of Picard number at least 2, with $G$ simple. Then $F$ is rigid, that is $h^1(F,T_F)=0$. Moreover, the automorphism group of $F$ is isomorphic to the semi-direct product of $G$ and the symmetry $E_P$ of the Dynkin diagram of $P$.
\end{theorem}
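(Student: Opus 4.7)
The approach I would follow is the classical one via the Borel--Weil--Bott theorem applied to the tangent bundle. First, I would identify $T_F$ as a homogeneous vector bundle on $F = G/P$. Writing $\mathfrak{g} = \mathfrak{p} \oplus \mathfrak{u}^-$, where $\mathfrak{u}^-$ is the nilradical of the opposite parabolic, one has $T_F \cong G \times^P (\mathfrak{g}/\mathfrak{p})$, so $T_F$ is the $G$-equivariant bundle associated to the $P$-representation $\mathfrak{g}/\mathfrak{p}$. Filtering this $P$-module by its lower central series (or by the derived series of $\mathfrak{u}^-$), I would break $T_F$ into a filtration whose graded pieces are irreducible $P$-representations indexed by the positive roots lying outside $\mathfrak{p}$.

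The next step is to apply Bott's theorem to each graded piece. Concretely, for each root $\alpha$ corresponding to a graded piece, one shifts by $\rho$ (half the sum of positive roots) and tests whether the resulting weight is singular or regular dominant after a Weyl group twist; the associated cohomology either vanishes in all degrees or contributes in a single degree given by the length of the required Weyl element. The main computation is to verify:
\begin{itemize}
\item[(i)] the only regular pieces of $T_F$ that contribute lie in degree zero, and collectively they reassemble the adjoint representation $\mathfrak{g}$;
\item[(ii)] no piece contributes to $H^1(F, T_F)$.
\end{itemize}
The hypothesis that $P$ is non-maximal (Picard number $\geq 2$) is precisely what rules out the well-known exceptional pieces that arise for quadrics, Grassmannians of lines, and a handful of other maximal-parabolic cases; in the non-maximal setting every graded piece of $\mathfrak{g}/\mathfrak{p}$ sits within the adjoint representation and Bott's vanishing applies uniformly. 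This yields simultaneously $H^0(F, T_F) \cong \mathfrak{g}$ and $H^1(F, T_F) = 0$, giving the rigidity statement.

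Once $H^0(F, T_F) = \mathfrak{g}$ is established, the identity component $\Aut(F)^0$ is the image of $G$ in $\Aut(F)$, and since $G$ is simple, its action on $G/P$ is faithful modulo the finite kernel contained in $Z(G) \cap P$; up to this finite kernel, $\Aut(F)^0 \cong G$. For the component group $\Aut(F)/\Aut(F)^0$, I would study the induced action on $N^1(F)_\qq$, which is canonically identified with the $\Z$-span of the fundamental weights dual to the marked nodes of the Dynkin diagram of $P$. Any automorphism permutes the extremal contractions of $F$, which correspond bijectively to those marked nodes, and preserves the intersection form coming from the Weyl-invariant pairing. Standard arguments in the theory of algebraic groups (Chevalley's rigidity for reductive groups, or a direct check that outer automorphisms of $G$ preserving the $P$-conjugacy class yield automorphisms of $F$) identify this component group with the group $E_P$ of symmetries of the decorated Dynkin diagram. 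A splitting of the extension exists because $E_P$ can be realized by automorphisms of $G$ fixing a pinning, giving the semidirect product structure $\Aut(F) \cong G \rtimes E_P$.

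The main obstacle is the Bott computation for $H^0(T_F)$: one must verify that no weight outside of the adjoint representation survives the Weyl shift as a regular dominant weight, and that no piece produces a contribution to $H^1$. This is a case-free but delicate root-combinatorial verification, and is the only step where the non-maximality hypothesis on $P$ is truly used. The remaining reconstruction of $\Aut(F)$ from its Lie algebra and component group is then essentially formal, using that $G/P$ is projective and that any algebraic group acting on it with Lie algebra $\mathfrak{g}$ must contain $G$ itself.
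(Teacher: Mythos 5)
This statement is not proved in the paper: it is quoted as a classical result of Demazure, with a pointer to \cite[Theorems 1 and 2]{Demazure}, so there is no in-paper argument to compare against. Your sketch does follow the route of Demazure's original proof (Bott's theorem on the homogeneous bundle $T_F = G\times^P(\mathfrak g/\mathfrak p)$ for the cohomological part, then reconstruction of $\Aut(F)$ from $H^0(T_F)=\mathfrak g$ and a diagram-automorphism analysis for the component group), and the overall architecture is sound.

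Two points need correction, however. First, you locate the use of the hypothesis $\rho(F)\geq 2$ in the wrong place: Bott's vanishing $H^q(G/P,T_{G/P})=0$ for $q\geq 1$ holds for \emph{every} rational homogeneous space, with no restriction on $P$, so rigidity needs no hypothesis at all. What the Picard-number assumption excludes are the three exceptional pairs in Demazure's list where $H^0(T_F)\supsetneq\mathfrak g$, namely $\pp^{2n-1}$ as a $PSp_{2n}$-variety, the maximal orthogonal Grassmannian for $SO_{2n+1}$ (which is a spinor variety for $SO_{2n+2}$), and the five-dimensional quadric for $G_2$ --- all maximal parabolics of Picard number one; ``quadrics and Grassmannians of lines'' in general are not exceptional. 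Second, the substantive content of statements (i) and (ii) --- that no weight of $\mathfrak g/\mathfrak p$ shifted by $\rho$ survives as a length-one contribution, and that $H^0$ reassembles exactly $\mathfrak g$ outside the exceptional list --- is asserted rather than carried out; the graded pieces of $\mathfrak g/\mathfrak p$ are irreducible modules for the Levi factor rather than one-dimensional pieces indexed by individual roots, and the verification is exactly where the theorem lives. The final step (every automorphism normalises $\Aut(F)^0\cong G_{\mathrm{ad}}$, hence is inner up to a pinned diagram automorphism preserving the marked nodes, giving the split extension by $E_P$) is correctly sketched, provided one takes $G$ adjoint so that the action on $G/P$ is faithful.
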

Let us make a few comments. The rational homogeneous spaces which are called exceptional in \cite{Demazure} have Picard number one, so we can ignore them. The group of exterior automorphisms of $G$, which is denoted by $E$ in \cite{Demazure}, is known to be equal to the symmetries of the Dynkin diagram, see e.g. \cite[Section 10.6.10]{Procesi}. The group that Demazure calls $E_{\pi}$ here is denoted by $E_P$. The following result follows directly from Theorem \ref{mainthm} and Demazure's result.

\begin{corollary}\label{cor_hom}
Let $F=G/P$ be a homogeneous space of Picard rank at least $2$, with $G$ simple. Then $F$ is fibre-like if and only if is it isomorphic to one of the following varieties:
\begin{enumerate}
\item $F=F(n,k)$ parametrises pairs of subspaces $(L,H)$ in $\C^n$ such that $\dim L=k$, $\dim H=n-k$ and $L\subset H$;
\item $F=F(n)$ parametrises $n$ dimensional isotropic subspaces of ($\C^{2n},Q$), where $Q$ is a non-degenrate symmetric form;
\item $F=F^{\tau}$ parametrises pairs of isotropic subspaces $(L,\Pi)$ in  ($\C^8,Q$), where $Q$ is a non-degenrate symmetric form, $L$ is a line, $\Pi$ is four-dimensional and $L\subset \Pi$ (the upper-script $\tau$ stands for triality);
\item $F=F^{\tau}_i$ is the target of a contraction of a facet of the nef cone of $F^{\tau}$; more explicitly either $F^{\tau}_i=F(4)$ or $\Pi$ is forced to belong to one of the two connected component of the grassmannians of isotropic $4$-dimensional subspaces of $\C^8$.
\item $F=G/P$, where $G$ is the exceptional group $E_6$ and $P$ is associated to one of the two pairs of roots conjugated by the automorphism of the Dynkin diagram.
\end{enumerate}
\end{corollary}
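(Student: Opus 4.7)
The plan is to combine Demazure's theorem with the characterisation for rigid Fano varieties given by Theorem \ref{rigidchar}. Since $F=G/P$ is rigid, fibre-likeness is equivalent to
\[
N^1(F)^{\Aut(F)}_\qq = \qq K_F.
\]

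First I would argue that the action of $\Aut(F)$ on $N^1(F)_\qq$ factors through the finite group $E_P$. Indeed, any connected algebraic group acts trivially on the discrete lattice $N^1(F)$, so the connected component $G\subset \Aut(F)$ acts trivially; by Demazure the quotient $\Aut(F)/G\simeq E_P$, and therefore $\HMon(F)\simeq E_P$ and $N^1(F)^{\Aut(F)}_\qq = N^1(F)^{E_P}_\qq$.

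Next, I would identify a convenient $E_P$-equivariant basis of $N^1(F)_\qq$. The Picard group of $G/P$ is freely generated by the fundamental line bundles attached to the marked nodes of the Dynkin diagram of $P$ (equivalently, by the pullbacks of the hyperplane classes under the natural Mori contractions to the maximal parabolic quotients $G/P_i$). The group $E_P$ permutes this basis exactly as it permutes the marked nodes. Consequently
\[
\dim N^1(F)^{E_P}_\qq = \#\{\text{$E_P$-orbits on the set of marked nodes}\}.
\]
Since $[K_F]$ lies in the invariant subspace and is nonzero, the rigid characterisation translates into the following combinatorial condition: $F$ is fibre-like if and only if $E_P$ acts transitively on the marked nodes of the Dynkin diagram of $P$.

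The remaining step is a case-by-case analysis over the simple Dynkin diagrams. The nontrivial diagram symmetries are $\zz/2$ for $A_n$ ($n\ge 2$), $D_n$ ($n\ge 5$) and $E_6$, and $S_3$ for $D_4$; types $B_n$, $C_n$, $G_2$, $F_4$, $E_7$, $E_8$ have trivial diagram symmetry and thus only Picard rank one parabolics meet the criterion, which is excluded by hypothesis. Enumerating the $E_P$-transitive subsets of size at least two and translating each into its standard flag-variety description yields precisely the five families in the statement: a swap-pair of marked nodes in $A_n$ gives the two-step flag variety (1); the two spinor nodes of $D_n$ with $n\ge 5$ give (2); marking all three ``outer'' nodes of $D_4$ under the triality $S_3$ gives (3); the three $S_3$-orbits of pairs of outer $D_4$ nodes give (4); and the two $\zz/2$-orbits of size two in the $E_6$ diagram give (5). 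The main obstacle is performing this enumeration without omissions and matching each combinatorial configuration to the geometric model stated in the corollary, in particular recognising that the partial flag varieties in (4) are exactly the targets of the facet contractions of $F^\tau$.
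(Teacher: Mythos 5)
Your proposal is correct and follows essentially the same route as the paper: invoke the rigid characterisation together with Demazure's theorem, note that the connected group $G$ acts trivially on $N^1(F)_\qq$ so the action factors through $E_P$ permuting the basis indexed by the marked nodes, reduce fibre-likeness to transitivity of $E_P$ on the marked nodes, and then enumerate over the simple Dynkin diagrams. Your case analysis (including the separate treatment of $D_4$ triality and the two $E_6$ orbits) matches the paper's.
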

Before giving the proof, let us give some details about these varieties. Case $(1)$ is realised as an homogeneous space with $G=SL_n$; it has Picard number two and the faces of the nef cone are given by the projection onto grassmannians. If we fix a quadratic form $Q$, we get an automorphism of $F$ given by $\phi_Q(L,H)=(H^{\perp},L^{\perp})$ which exchanges the faces of the nef cone. In Case $(2)$, $G=SO_{2n}$. The linear subspaces of an even dimensional quadric are divided into two families, which correspond to the even and odd spin representations (cf. \cite[Section 11.7.2]{Procesi} or \cite[Section 6.1]{GH}). This variety has Picard number two; the action of an improper orthogonal transformation exchanges the faces of the nef cone. The third variety is homogeneous for $G=SO_8$; it has Picard number $3$. The relevant automorphisms are realised via triality (e.g. \cite[Section 11.7.3]{Procesi}).

\begin{proof}
Since $F$ is rigid, because of Theorem \ref{mainthm} we have just to study the action of $\Aut(F)= G\rtimes E_P$ on $N^1(F)_{\qq}$. The group $G$ acts trivially in cohomology, so we are left with the action of the finite group $E_P$. The group $N^1(F)_{\qq}$ is spanned by the line bundles associated to the simple roots of $P$ (cf. \cite{Brion}), so we can identify a basis of $N^1(F)_{\qq}$ with the set of the marked nodes of the Dynkin diagram of $P$. This identification is equivariant with respect to the group of symmetry $E_P$; in particular $\HMon(F)$ equals to $E_P$. In other words, $\dim N^1(F)_{\qq}^{\Aut(F)}=1$ if and only if the group of symmetry $E_P$ acts transitively on the set of marked nodes. Dynkin diagram and their symmetries are classified (e.g. \cite[Section 10.6.10]{Procesi}); by direct inspection, we conclude that the unique $F$ which are fibre-like are the ones listed above.  More explicitly,  the Dynkin diagrams $B_n$, $C_n$, $E_7$, $E_8$, $F_4$ and $G_2$ have no symmetries, so the rational homogeneous spaces for the respective groups are fibre-like if and only if the Picard number is one. $A_n$ has just an order two symmetry, so for each pair of conjugated nodes one gets a fibre-like homogeneous space of Picard number $2$; this is case $(1)$. $D_n$, for $n\geq 4$, has just an order-two symmetry which fixes all nodes except the two nodes associated to the Spin representations, this gives case $(2)$. $D_4$ has the symmetric group on three elements as group of symmetries, this is the so called triality and gives cases $(3)$ and $(4)$. The Dynkin diagram $E_6$ has an order-two symmetry which gives case $(5)$.
\end{proof}

\end{section}

\bibliographystyle{alpha}
\bibliography{./bibliografia}

\end{document}